\newcommand{\mM}{\underline{M}}
\newcommand{\mN}{\underline{N}}
\newcommand{\mZ}{\underline{\mathbb Z}}
\newcommand{\mA}{\underline{A}}
\newcommand{\mI}{\underline{I}}
\newcommand{\mE}{\underline{E}}
\newcommand{\Z}{\mathbb Z}
\newcommand{\N}{\mathbb N}
\newcommand{\MU}{MU}
\newcommand{\EP}{\tilde{E}\mathcal P}
\newcommand{\cP}{\mathcal P}
\newcommand{\R}{\mathbb R}
\DeclareMathOperator{\sdim}{scon}
\newcommand{\res}{Res}
\newcommand{\boxprod}{\,{\scriptstyle \boxtimes}\,}
\newtheorem{theorem}{Theorem}[section]
\newtheorem{thm}[theorem]{Theorem}
\newtheorem{cor}[theorem]{Corollary}
\newtheorem{defn}[theorem]{Definition}
\newtheorem{prop}[theorem]{Proposition}
\newtheorem{conj}[theorem]{Conjecture}
\newtheorem{warning}[theorem]{Warning}
\newtheorem{example}[theorem]{Example}
\newtheorem{remark}[theorem]{Remark}
\newtheorem{lemma}[theorem]{Lemma}
\newcommand{\EM}{Eilenberg-Mac Lane}
\begin{document}

\title{The equivariant slice filtration: a primer}

\author{Michael A.~Hill}
\email{mikehill@virginia.edu}
\address{Department of Mathematics,
University of Virginia,
Charlottesville, VA 22904}
\thanks{The author was supported by NSF DMS--0906285, DARPA FA9550-07-1-0555, and the Sloan Foundation}


\keywords{slice spectral sequence, slice filtration, equivariant homotopy}

\begin{abstract}
We present an introduction to the equivariant slice filtration. After reviewing the definitions and basic properties, we determine the slice-connectivity of various families of naturally arising spectra. This leads to an analysis of pullbacks of slices defined on quotient groups, producing new collections of slices. Building on this, we determine the slice tower for the Eilenberg-Mac Lane spectrum associated to a Mackey functor for a cyclic $p$-group. We then relate the Postnikov tower to the slice tower for various spectra. Finally, we pose a few conjectures about the nature of slices and pullbacks.
\end{abstract}

\maketitle

\section{Introduction}
\subsection{Background}
Essential to the solution with Hopkins and Ravenel to the Kervaire Invariant One Problem is the construction of a new natural filtration of an equivariant spectrum, the slice filtration \cite{HHR:Kervaire}. This is a generalization of Dugger's slice filtration for $C_{2}$-equivariant homotopy theory, and it is analogous to the motivic slice filtration of Voevodsky \cite{Dugger}, \cite{Voe:ZeroSlice}, \cite{HoMor}. The basic idea is simple: mirror the Postnikov tower construction using a different collection of representation spheres (giving a different notion of ``connected'' and ``co-connected''). The slice and Postnikov towers are obviously closely related, but the exact relationship is unclear.

This filtration is especially nice on the spectra $\MU^{(n)}$ used in the proof and on the relevant localizations thereof. For these, the homotopy groups of the layers compute the homology and cohomology of regular representation spheres. This allowed us to determine the vanishing of $\pi_{-2}$ of the relevant localizations. In some sense, we were very lucky; the slice filtration is very mysterious for many spectra other than these localizations of $\MU^{(n)}$.

The goal of this paper is two-fold:
\begin{enumerate}
\item serve as a travelogue for the slice filtration and slice spectral sequence, recording basic properties and interpretations, and
\item provide a bestiary of slices and the slice tower for a general Mackey functor for cyclic $p$-groups.
\end{enumerate}
We begin in \S\ref{sec:Basics} by recalling the basic facts about the slice filtration, drawing heavily from \cite{HHR:Kervaire}. Starting in \S\ref{sec:Dimensions}, we set a course for uncharted territory. In \S\ref{sec:Dimensions},  we will address when there are natural connections between the dimension of a $CW$-complex or of a representation sphere and its slice-connectivity. We will relate slices to pullbacks along surjective maps of groups in \S\ref{sec:Pullback}. In \S\ref{sec:Burnside}, we take an algebraic digression, using this to directly compute the slice tower of the {\EM} spectrum for the Burnside Mackey functor $\mA$ and for a general Mackey functor $\mM$ for a cyclic group of prime-power order. Finally in \S\ref{sec:Geometric}, we build on the computations for $H\mA$, introducing an algebraic criterion on the homotopy groups of a $G$-spectrum $X$ which allows for complete determination of slices of $X$. This provides a class of spectra for which the connection between the slice and Postnikov towers is completely understood. We use this to provide an interpretation of the slice tower for a general spectrum.

All of our discussion will take place in the genuine equivariant stable homotopy category for a finite group $G$. In particular, we have natural transfers associated to all subgroups. Good references are the work of May et al \cite{LMayS}, \cite{MandellMay}, \cite{May:Alaska}, unpublished notes of Schwede \cite{Schwede}, and the appendix to Hill-Hopkins-Ravenel \cite{HHR:Kervaire}. We use freely and liberally the language of Mackey functors. In particular, all homotopy groups are Mackey functor valued, and we will follow standard notation, writing these with an underline. We found papers and notes of Lewis especially helpful \cite{Lewis:Projective}, \cite{Lewis:Green}.

\subsection{Notation}
Throughout all that follows $G$ will be a finite group. When it appears, $N$ will always denote a normal subgroup, while $H$ and $K$ will be used for subgroups that are not necessarily normal.

The real regular representation of $G$ will be denoted $\rho_{G}$, and if there is no ambiguity, we shall drop the subscript. If $X$ is a finite $G$ set, then we denote the permutation representation generated by $X$ by $\rho_{X}$. The quotient of $\rho$ or $\rho_{G}$ by the trivial subrepresentation will be denoted $\bar{\rho}_{G}$.

\subsection*{Acknowledgements}
The author is indebted to Andrew Blumberg, Tyler Lawson, and Doug Ravenel for many helpful conversations and comments on earlier drafts.

\section{Basic properties of the slice filtration}\label{sec:Basics}
In this section, we recall the definition and basic properties of the slice filtration. Essentially all of this material is from \cite{HHR:Kervaire}, especially sections 3 and 4.

\begin{defn}
For each integer $n$, let $\tau_{\geq n}$ denote the localizing subcategory of $G$-spectra generated by $G_+\wedge_H S^{k\rho_H-\epsilon}$, where $H$ ranges over all subgroups of $G$, where $k\cdot |H|-\epsilon\geq n$, and where $\epsilon=0,1$.

If $X$ is an object of $\tau_{\geq n}$, we say ``$X \geq n$'' and we say the ``slice-connectivity of $X$'', written $\sdim(X)$ is greater than or equal to $n$.

If $X$ is in $\tau_{\geq n}$ but not in $\tau_{\geq n+1}$, we will say that the slice-connectivity of $X$ is $n$.
\end{defn}

\noindent If we are considering $\tau_{\geq n}$ for various groups, we shall adorn this symbol with a superscript distinguishing between them: $\tau_{\geq n}^G$.

What does ``localizing subcategory'' mean? Simply put, this is the category of acyclics for a localization functor on $G$-spectra \cite{DrorFarjoun}. In particular, we have the following properties.
\begin{enumerate}
\item The category $\tau_{\geq n}$ is a full subcategory, and if $X$ is weakly equivalent to an object in $\tau_{\geq n}$, then $X$ is an object in $\tau_{\geq n}$.
\item $\tau_{\geq n}$ is closed under cofibers: If $X$ and $Y$ are objects in $\tau_{\geq n}$ and $f\colon X\to Y$ is a map, then the cofiber $C(f)$ is an object in $\tau_{\geq n}$.
\item $\tau_{\geq n}$ is closed under extensions: If $X$ and $Z$ are objects in $\tau_{\geq n}$, and if $X\to Y\to Z$ is a cofiber sequence, then $Y$ is an object in $\tau_{\geq n}$.
\item $\tau_{\geq n}$ is closed under retracts and infinite wedges.
\item $\tau_{\geq n}$ is closed under directed colimits.
\end{enumerate}

\begin{warning}
It is {\emph{very}} important that these are not triangulated subcategories. They are closed under suspension and cofibers but not desuspension and fibers. In fact, it is not even the case that desuspension always takes $\tau_{\geq n}$ to $\tau_{\geq (n-1)}$. Remark~\ref{rmk:RepDimen} below illustrates this.
\end{warning}

Associated to a localizing subcategory is a localization / nullification functor for which the localizing category is the category of acyclics \cite{DrorFarjoun}.

\begin{defn}
Let $P^{n-1}(-)$ be the localization functor associated to $\tau_{\geq n}$.
\end{defn}

\begin{defn}
We say ``$X$ is less than or equal to $(n-1)$'' if the localization map $X\to P^{n-1}(X)$ is an equivalence.
\end{defn}

There is an obvious inclusion of full subcategories $\tau_{\geq (n+1)}\subset\tau_{\geq n}$. This gives us natural transformations $P^n(-)\to P^{n-1}(-)$, and so to any spectrum $X$, we have a naturally associated tower.

\begin{defn}
The ``slice tower'' of $X$ is the tower with stages $P^n(X)$ and maps the natural maps $P^n(X)\to P^{n-1}(X)$. The fiber of the map $P^n(X)\to P^{n-1}(X)$ is the ``$n$\textsuperscript{th} slice of $X$'', denoted $P_n^n(X)$.
\end{defn}

In forming the nullification tower, we implicitly killed the $G$-space of all maps from objects in $\tau_{\geq n}$ to $X$ in order to form $P^{n-1}(X)$. However, we can see equivariance much more easily as follows.

\begin{prop}
Let $i_H^*$ denote the forgetful functor from $G$-spectra to $H$-spectra, and let $G_+\wedge_H(-)$ denote its left adjoint, induction. Then we have natural inclusions of full sub-categories
\[
i_H^*\tau_{\geq n}^G\subset \tau_{\geq n}^H\quad\text{ and }\quad G_+\wedge_H(\tau_{\geq n}^H)\subset\tau_{\geq n}^G.
\]

The localizing subcategory generated by $i_H^*(\tau_{\geq n}^G)$ is $\tau_{\geq n}^H$.
\end{prop}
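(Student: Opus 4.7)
The plan is to reduce each of the three assertions to an explicit check on the generating slice cells $G_+\wedge_K S^{k\rho_K-\epsilon}$ (and their $H$-analogues). This reduction is legitimate because both functors in sight preserve the colimit operations used to build a localizing subcategory from its generators: induction $G_+\wedge_H(-)$ does so as a left adjoint, while restriction $i_H^*$ does so because it is both a left and a right adjoint (the Wirthm\"uller isomorphism), so both preserve cofibers, wedges, directed colimits, retracts, and extensions.

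For the inclusion $G_+\wedge_H(\tau_{\geq n}^H)\subset\tau_{\geq n}^G$, applying induction to a generator $H_+\wedge_K S^{k\rho_K-\epsilon}$ of $\tau_{\geq n}^H$ (with $K\leq H$ and $k|K|-\epsilon\geq n$) yields, by transitivity of induction, $G_+\wedge_K S^{k\rho_K-\epsilon}$, which is itself a generator of $\tau_{\geq n}^G$ because the inequality $k|K|-\epsilon\geq n$ does not care whether $K$ is viewed as a subgroup of $H$ or of $G$.

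For the inclusion $i_H^*\tau_{\geq n}^G\subset\tau_{\geq n}^H$, I would apply the double coset formula to a generator $G_+\wedge_K S^{k\rho_K-\epsilon}$, writing its restriction as a wedge over $[g]\in H\backslash G/K$ of summands $H_+\wedge_{L_g}S^{k\rho_{gKg^{-1}}|_{L_g}-\epsilon}$, where $L_g=H\cap gKg^{-1}$. The crucial representation-theoretic input is that whenever $M\leq L$ one has $\rho_L|_M\cong [L:M]\,\rho_M$; applied to $L=gKg^{-1}$ and $M=L_g$, this rewrites each exponent as $k\cdot(|K|/|L_g|)\,\rho_{L_g}-\epsilon$, whose slice weight $k\cdot(|K|/|L_g|)\cdot|L_g|-\epsilon=k|K|-\epsilon$ is still $\geq n$. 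So each wedge summand is a generator of $\tau_{\geq n}^H$, and the inclusion follows.

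For the final statement about the generated localizing subcategory, one containment is immediate from the inclusion just proved. For the reverse, given any generator $H_+\wedge_K S^{k\rho_K-\epsilon}$ of $\tau_{\geq n}^H$ with $K\leq H$, I would observe that $G_+\wedge_K S^{k\rho_K-\epsilon}$ lies in $\tau_{\geq n}^G$ and that in the double coset decomposition of its restriction to $H$ the identity double coset summand is exactly $H_+\wedge_K S^{k\rho_K-\epsilon}$ (since $H\cap K=K$ when $K\leq H$). The desired generator thus appears as a retract of an object of $i_H^*\tau_{\geq n}^G$, and so lies in the generated localizing subcategory by closure under retracts. The only nontrivial ingredient anywhere is the double coset formula combined with the identity $\rho_L|_M=[L:M]\,\rho_M$; everything else is bookkeeping on slice cells, and this bookkeeping is the main place where care is required.
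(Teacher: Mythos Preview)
Your argument is correct and follows the same strategy as the paper: check the generators, using transitivity of induction for the second inclusion, the double coset formula together with $\rho_L|_M\cong [L:M]\rho_M$ for the first, and a retract argument for the final statement. The only cosmetic difference is that for the last part the paper invokes the general fact that any $H$-spectrum $X$ is a wedge summand of $i_H^*(G_+\wedge_H X)$, whereas you identify the relevant summand explicitly via the identity double coset; these are the same observation at different levels of abstraction.
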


This is actually immediate from the definitions. The restriction of any of the generators of $\tau_{\geq n}^G$ to $H$ is a wedge of generators of $\tau_{\geq n}^H$. Similarly, if we induce up a generator of $\tau_{\geq n}^H$, then we get a generator of $\tau_{\geq n}^G$. This proves the two inclusions. For the final part, $X$ is a wedge summand of $i_H^*(G_+\wedge_H X)$, and therefore all generators of $\tau_{\geq n}^H$ are in the localizing subcategory generated by $i_{H}^{\ast}(\tau_{\geq n}^{G})$.

\begin{cor}
The restriction to $H$ of the slice tower of $X$ is the slice tower of the restriction of $X$ to $H$:
\[
P^ni_H^*(X)=i_H^*P^n(X).
\]
\end{cor}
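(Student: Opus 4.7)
The plan is to verify directly that $i_H^* P^n(X)$ satisfies the universal property that characterizes $P^n(i_H^* X)$. Recall that for any $G$-spectrum $Y$, there is a fiber sequence $F_n(Y) \to Y \to P^n(Y)$ in which $F_n(Y) \in \tau_{\geq n+1}^G$ and $P^n(Y)$ is $\tau_{\geq n+1}^G$-local, and any spectrum with these two properties is canonically equivalent to $P^n(Y)$. So applied to our $X$, I need to check two things: that the restriction of the fiber $F_n(X)$ lies in $\tau_{\geq n+1}^H$, and that $i_H^* P^n(X)$ is $\tau_{\geq n+1}^H$-local.

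For the first property, I use that $i_H^*$ preserves fiber sequences (as a right adjoint, or equivalently because restriction is exact on genuine $G$-spectra). Applying $i_H^*$ to $F_n(X) \to X \to P^n(X)$ produces a fiber sequence
\[
i_H^* F_n(X) \to i_H^* X \to i_H^* P^n(X),
\]
so it remains only to observe that $i_H^* F_n(X) \in \tau_{\geq n+1}^H$, which is immediate from the inclusion $i_H^* \tau_{\geq n+1}^G \subset \tau_{\geq n+1}^H$ established in the previous proposition.

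For the locality property, I use the adjunction $(G_+ \wedge_H -, i_H^*)$. Given any $Y \in \tau_{\geq n+1}^H$, the adjunction gives a natural equivalence of mapping spectra
\[
F^H\bigl(Y,\, i_H^* P^n(X)\bigr) \simeq F^G\bigl(G_+ \wedge_H Y,\, P^n(X)\bigr).
\]
By the other half of the proposition, $G_+ \wedge_H Y \in \tau_{\geq n+1}^G$, and since $P^n(X)$ is $\tau_{\geq n+1}^G$-local, the right-hand side is contractible. Thus $i_H^* P^n(X)$ is $\tau_{\geq n+1}^H$-local, completing the verification.

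The argument is essentially formal once one has the proposition; the only thing that requires care is cleanly invoking both halves of the proposition, one for the acyclic input of the fiber and one (via the adjunction) for testing locality. No real obstacle arises, because restriction is exact and the induction-restriction adjunction is part of the standard structure of the genuine equivariant stable category.
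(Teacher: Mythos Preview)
Your proof is correct and is exactly the standard argument one extracts from the two inclusions in the preceding proposition; the paper itself does not spell out a proof, treating the corollary as immediate from that proposition. Your use of part~(1) for the acyclic fiber and part~(2) via the induction--restriction adjunction for locality is precisely what the author has in mind.
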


For positive $n$, the slice tower receives a map from the Postnikov tower of the same dimension. 
\begin{prop}\label{prop:Spheres}
For all subgroups $H$ of $G$ and for all $n\geq -1$, the induced sphere $G_+\wedge_HS^n$ is in $\tau_{\geq n}$.
\end{prop}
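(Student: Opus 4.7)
The plan is a double induction, primarily on $n$ and secondarily on $|G|$. By the preceding proposition on subcategories under restriction and induction, the containment $G_+\wedge_H(\tau_{\geq n}^H)\subseteq\tau_{\geq n}^G$ reduces the claim to showing that, for every finite group $G$ and every $n\geq -1$, the sphere $S^n$ with trivial $G$-action lies in $\tau_{\geq n}^G$. The cases $n=-1$ and $n=0$ are immediate: $S^{-1}=G_+\wedge_G S^{0\cdot\rho_G-1}$ and $S^0=G_+\wedge_G S^{0\cdot\rho_G}$ are by definition generators of $\tau_{\geq -1}^G$ and $\tau_{\geq 0}^G$ respectively.

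For $n\geq 1$ I induct on $|G|$; the case $|G|=1$ is trivial, since then $S^n$ itself is a generator (with $k=n$, $K=\{e\}$, $\epsilon=0$). For $|G|\geq 2$ I use the unit-sphere cofiber sequence $S(\bar\rho_G)_+\to S^0\to S^{\bar\rho_G}$. The key feature of $\bar\rho_G$ is that its $G$-fixed points vanish, so $S(\bar\rho_G)$ has no $G$-fixed points and hence admits a $G$-CW structure whose cells are all of the form $G/H_+\wedge D^k$ with $H$ a proper subgroup of $G$ and $k\geq 0$. Smashing the cofiber sequence with $S^{n-1}$ and rotating presents $S^n$ as the cofiber of a map
\[
S^{\rho_G+n-2}\longrightarrow S(\bar\rho_G)_+\wedge S^n,
\]
so by closure of $\tau_{\geq n}^G$ under cofibers it suffices to place both domain and codomain into $\tau_{\geq n}^G$.

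The codomain is filtered by its cells, which have the form $G_+\wedge_H S^{k+n}$ for a proper subgroup $H$ and $k\geq 0$; by the $|G|$-induction applied to each such $H$, every one of these lies in $\tau_{\geq k+n}^G\subseteq\tau_{\geq n}^G$, and repeated use of closure under cofibers finishes that term. For the domain $S^{\rho_G+n-2}$, the case $n=1$ identifies it with $S^{\bar\rho_G}=S^{\rho_G-1}$, which is by definition a generator of $\tau_{\geq |G|-1}^G\subseteq\tau_{\geq 1}^G$; for $n\geq 2$ one either iterates the same cofiber-sequence construction or appeals to the multiplicativity $\tau_{\geq a}\wedge\tau_{\geq b}\subseteq\tau_{\geq a+b}$ of the slice filtration. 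The principal technical subtlety, and the reason the double induction is genuinely needed, is that the $\epsilon=0$ generators $G_+\wedge_K S^{k\rho_K}$ are not preserved under suspension as cells of the prescribed form: $\Sigma(G_+\wedge_K S^{k\rho_K})=G_+\wedge_K S^{k\rho_K+1}$ is not itself one of the generators. Presenting the trivial sphere $S^n$ via the unit sphere of $\bar\rho_G$ is precisely what sidesteps this difficulty.
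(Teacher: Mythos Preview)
Your overall architecture is sound, but one of the two alternatives you offer for handling $S^{\rho_G+n-2}$ is false. The slice filtration is \emph{not} multiplicative: it is not true in general that $\tau_{\geq a}\wedge\tau_{\geq b}\subseteq\tau_{\geq a+b}$. The paper says this explicitly just before Proposition~\ref{prop:SmashPositive}, and a concrete counterexample for $G=C_2$ is $S^{\sigma}\in\tau_{\geq 1}$ while $S^{2\sigma}\notin\tau_{\geq 2}$ (desuspend by $\rho$: $S^{\sigma-1}$ has $G$-fixed points $S^{-1}$, so is not $(-1)$-connected, hence not in $\tau_{\geq 0}$). Your other alternative, ``iterate the same cofiber-sequence construction,'' can be made to work, but you should say what it means: by the outer induction on $n$ you already have $S^{n-2}\in\tau_{\geq n-2}$, and then the $\rho_G$-suspension invariance $S^{\rho_G}\wedge\tau_{\geq m}=\tau_{\geq m+|G|}$ gives $S^{\rho_G+n-2}\in\tau_{\geq |G|+n-2}\subseteq\tau_{\geq n}$ since $|G|\geq 2$. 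That special case of multiplicativity is what you actually need, and it is available.

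The paper's route is a bit cleaner and avoids the induction on $n$ altogether. Rather than rotating the cofiber sequence so that $S^n$ appears as a cofiber, it leaves $S^n$ as the \emph{middle} term and appeals to closure under extensions; and instead of comparing $S^n$ to $S^{\rho_G}$, it compares it to $S^{n\rho_G}$. Concretely, the fixed-point inclusion gives
\[
S(n\bar\rho_G)_+\wedge S^n \longrightarrow S^n \longrightarrow S^{n\rho_G}.
\]
The right-hand term is literally a generator of $\tau_{\geq n|G|}\subseteq\tau_{\geq n}$, with no further argument needed; the left-hand term is built from cells $G_+\wedge_H S^{k+n}$ with $H$ proper and $k\geq 0$, which lie in $\tau_{\geq n}$ by the induction on $|G|$. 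Closure under extensions finishes it. Your rotation trades this for having to separately control $S^{\rho_G+n-2}$, which is exactly where the false multiplicativity crept in.
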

This is not difficult to show using induction on $|G|$ and  the closure of $\tau_{\geq n}$ under extensions, since $G_+\wedge_HS^n$ is the bottom cell in $G_+\wedge_HS^{n\rho_H}$. We will use this technique exclusively. This result shows that the slice tower refines the Postnikov tower in non-negative degrees.

\begin{cor}
For all $X$ and for all $n\geq -1$, $P^{n-1}(X)$ is $n$-coconnected.
\end{cor}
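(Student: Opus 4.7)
The plan is to combine Proposition~\ref{prop:Spheres} with the definition of $P^{n-1}$ as a nullification functor. Recall that $n$-coconnected for an equivariant spectrum $Y$ means that the Mackey functor homotopy groups $\underline{\pi}_k(Y)$ vanish for all $k\geq n$; evaluated at an orbit $G/H$, this is the condition that $[G_+\wedge_H S^k, Y]=0$ for all subgroups $H\leq G$ and all $k\geq n$.

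First, I would observe that by definition of the localization, any object $A\in\tau_{\geq n}$ satisfies $[A, P^{n-1}(X)]=0$. So it suffices to show that the relevant induced spheres lie in $\tau_{\geq n}$. Given $n\geq -1$, $k\geq n$, and $H\leq G$, we have $k\geq n\geq -1$, so Proposition~\ref{prop:Spheres} applies to give $G_+\wedge_H S^k\in\tau_{\geq k}$. Since the subcategories $\tau_{\geq m}$ are nested in $m$, and $k\geq n$, we conclude $G_+\wedge_H S^k\in\tau_{\geq n}$.

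Putting these together, $[G_+\wedge_H S^k, P^{n-1}(X)] = 0$ for every subgroup $H$ and every $k\geq n$, which is exactly the statement that $P^{n-1}(X)$ is $n$-coconnected.

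There is no real obstacle here; the proof is a two-line deduction from the preceding proposition. The only thing to be careful about is the hypothesis $n\geq -1$: this is needed to guarantee that Proposition~\ref{prop:Spheres} applies to every sphere $G_+\wedge_H S^k$ we consider (i.e., to ensure $k\geq -1$), since that proposition is false for negative-dimensional spheres in general.
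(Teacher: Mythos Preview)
Your argument is correct and is exactly the intended one: the paper states this as an immediate corollary of Proposition~\ref{prop:Spheres} without further proof, and you have simply written out the obvious deduction using the defining property of $P^{n-1}$ as the nullification of $\tau_{\geq n}$. Your remark on why the hypothesis $n\geq -1$ is needed is also on point.
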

The case for $n$ negative is also important both in determining the colimit of the slice tower and in our later analysis of special slices. We postpone our discussion of these cases briefly.

Now here are the only examples in which we completely understand $\tau_{\geq n}$.

\begin{example}
The generators of $\tau_{\geq 0}$ which are not zero-connected are $G_+\wedge_H S^{0\rho_H}$ and $G_+\wedge_HS^{\bar{\rho}_{H}}$. Since $G_+\wedge_H S^0=G/H_+$, and since all spectra which are $(-1)$-connected are weakly equivalent to $G$-CW-spectra, we learn that
\[
\tau_{\geq 0}=\{\text{(-1)-connected $G$-spectra.}\}
\]
\end{example}

\begin{example}
Just as with $\tau_{\geq 0}$, we can understand $\tau_{\geq -1}$. Here the generators are all in the localizing subcategory generated by $\Sigma^{-1} G/H_+$, and so
\[
\tau_{\geq -1}=\{\text{(-2)-connected $G$-spectra.}\}
\]
\end{example}

Together this gives an important corollary:

\begin{cor}
The $(-1)$-slice of any spectrum $X$ is the $(-1)$-Postnikov layer: 
\[
P^{-1}_{-1}(X)\simeq\Sigma^{-1}H\underline{\pi_{-1}}(X).
\]
\end{cor}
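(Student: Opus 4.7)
The plan is to identify the two stages $P^{-1}(X)$ and $P^{-2}(X)$ that bracket the $(-1)$-slice with ordinary equivariant Postnikov sections, and then to read off the fiber.

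The two preceding examples identify $\tau_{\geq 0}$ with the class of $(-1)$-connected $G$-spectra and $\tau_{\geq -1}$ with the class of $(-2)$-connected $G$-spectra. Given this, I would first show that $P^{-1}(X)\simeq \tau_{\leq -1}(X)$, where $\tau_{\leq -1}$ is the usual equivariant Postnikov section killing $\pi_k$ for $k\geq 0$. This is a uniqueness-of-localization argument: the natural map $X\to \tau_{\leq -1}(X)$ has fiber the connective cover $\tau_{\geq 0}(X)$, which is $(-1)$-connected and therefore in $\tau_{\geq 0}$; and the target is $\tau_{\geq 0}$-null, because every generator $G_+\wedge_H S^{k\rho_H-\epsilon}$ with $k|H|-\epsilon\geq 0$ restricts to a $(-1)$-connected $H$-spectrum mapping into an $H$-spectrum concentrated in strictly negative degrees, and equivariant Whitehead makes such a mapping spectrum contractible. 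Applying the identical argument with indices shifted by one gives $P^{-2}(X)\simeq \tau_{\leq -2}(X)$.

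With these two identifications the defining fiber sequence for the slice
\[
P^{-1}_{-1}(X)\to P^{-1}(X)\to P^{-2}(X)
\]
becomes the standard Postnikov fiber sequence $\tau_{\leq -1}(X)\to \tau_{\leq -2}(X)$, whose fiber has $\underline{\pi_{-1}}=\underline{\pi_{-1}}(X)$ and all other homotopy Mackey functors zero by the long exact sequence. Hence $P^{-1}_{-1}(X)\simeq \Sigma^{-1}H\underline{\pi_{-1}}(X)$, as claimed.

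The main step requiring care is the identification $P^{-1}\simeq \tau_{\leq -1}$: one has to check that the abstract nullification produced by the localizing-subcategory machinery of Dror Farjoun coincides with the familiar Postnikov functor. Once this is in hand the conclusion is immediate from elementary Postnikov theory, and the shift by one producing $P^{-2}\simeq \tau_{\leq -2}$ is automatic.
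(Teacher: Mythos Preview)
Your argument is correct and is exactly the one the paper has in mind: the corollary is stated without proof immediately after the two examples identifying $\tau_{\geq 0}$ and $\tau_{\geq -1}$ with the $(-1)$- and $(-2)$-connected spectra, and your identification of $P^{-1}$ and $P^{-2}$ with the corresponding Postnikov truncations is precisely the intended deduction. One small simplification: once you know $\tau_{\geq 0}$ \emph{equals} the class of $(-1)$-connected spectra, the nullifications $P^{-1}$ and $\tau_{\leq -1}$ have literally the same class of acyclics and are therefore the same functor, so the separate generator-by-generator check of nullity is unnecessary.
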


\begin{cor}
For any Mackey functor $\mM$, $\Sigma^{-1}H\mM$ is a $(-1)$-slice.
\end{cor}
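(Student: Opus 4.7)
The plan is to deduce this immediately from the preceding corollary. I set $X = \Sigma^{-1}H\mM$ and apply the identification $P^{-1}_{-1}(X) \simeq \Sigma^{-1}H\underline{\pi_{-1}}(X)$ to $X$ itself, recovering $\Sigma^{-1}H\mM$ as its own $(-1)$-slice.

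First I would verify that $X = \Sigma^{-1}H\mM$ lies in $\tau_{\geq -1}$. This is immediate from the example identifying $\tau_{\geq -1}$ with the category of $(-2)$-connected $G$-spectra: the only non-vanishing homotopy Mackey functor of $X$ is $\underline{\pi_{-1}}(X) = \mM$, so $X$ is certainly $(-2)$-connected.

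Next I would apply the preceding corollary to $X$ itself to obtain
\[
P^{-1}_{-1}(X) \simeq \Sigma^{-1}H\underline{\pi_{-1}}(X) = \Sigma^{-1}H\mM = X.
\]
Since $X \in \tau_{\geq -1}$, the nullification $P^{-2}(X)$ is contractible, so the defining fiber sequence $P^{-1}_{-1}(X) \to P^{-1}(X) \to P^{-2}(X)$ collapses to give $P^{-1}(X) \simeq P^{-1}_{-1}(X) \simeq X$. Thus $X$ lies in $\tau_{\geq -1}$ and is simultaneously fixed by $P^{-1}$, which is exactly the pair of conditions characterizing a $(-1)$-slice.

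There is no real obstacle here: the preceding corollary does all of the work, and the only content is the formal observation that every Mackey functor $\mM$ arises as $\underline{\pi_{-1}}$ of some spectrum (namely $\Sigma^{-1}H\mM$ itself), so that the identification $P^{-1}_{-1}(Z) \simeq \Sigma^{-1}H\underline{\pi_{-1}}(Z)$ can be turned into the statement that $\Sigma^{-1}H\mM$ is a slice.
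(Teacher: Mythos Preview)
Your proof is correct and follows essentially the same route the paper intends: the corollary is stated without proof as an immediate consequence of the preceding one, and you have spelled out exactly that deduction. One small simplification: once you have $P^{-1}_{-1}(X)\simeq X$ from the preceding corollary, you are already done, since $P^{-1}_{-1}(X)$ is by definition a $(-1)$-slice; the fiber-sequence detour through $P^{-2}(X)\simeq *$ is not needed.
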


We also have an algebraic description of $0$-slices. A complete proof is in \cite[Lemma 3.2]{HHR:Kervaire}; we sketch a proof here.

\begin{thm}\label{thm:zeroslices}
The category of $0$-slices is the category of Mackey functors $\mM$ such that all restrictions are injections.
\end{thm}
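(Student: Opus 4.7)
The plan is to show that any 0-slice $X$ must be equivalent to an {\EM} spectrum $H\mM$, and then to determine precisely which Mackey functors $\mM$ yield 0-slices.

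Suppose $X$ is a 0-slice. Since $X \geq 0$, the previous example gives that $X$ is $(-1)$-connected. Since $X \leq 0$, the localization map $X \to P^0 X$ is an equivalence, so $[Y, X]^G = 0$ for every $Y \in \tau_{\geq 1}$. By Proposition~\ref{prop:Spheres}, $G_+ \wedge_H S^n \in \tau_{\geq n} \subseteq \tau_{\geq 1}$ for every subgroup $H$ and every $n \geq 1$, which gives $\pi^H_n(X) = 0$ for all $H \leq G$ and $n \geq 1$. Together with $(-1)$-connectivity, this pins down $X \simeq H\mM$ with $\mM := \underline{\pi}_0(X)$.

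With $X = H\mM$, the condition $X \leq 0$ translates, via the adjunction $G_+ \wedge_H (-) \dashv i_H^*$, into the RO$(H)$-graded vanishing $\pi^H_{k\rho_H - \epsilon}(H(i_H^*\mM)) = 0$ for every subgroup $H$ with $|H| \geq 2$, every $k \geq 1$, and every $\epsilon \in \{0,1\}$ with $k|H| - \epsilon \geq 1$. The essential case is $V = \rho_H - 1$ (that is, $k = 1$, $\epsilon = 1$); the cases $k \geq 2$ or $\epsilon = 0$ follow from the essential case by smashing with $S^{(k-1)\rho_H}$ or by a trivial shift. For the essential case I would use the cofiber sequence $S(\rho_H)_+ \to S^0 \to S^{\rho_H}$, map it into $H\mM$, and run the long exact sequence in RO$(H)$-graded homotopy, exploiting that $\pi^H_n(H\mM) = 0$ for $n \neq 0$. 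When $|H|$ is prime the action on $S(\rho_H)$ is free, the long exact sequence collapses, and one reads off $\pi^H_{\rho_H - 1}(H\mM) \cong \ker\bigl(\res^H_e : \mM(G/H) \to \mM(G/e)\bigr)$; the vanishing condition is then exactly injectivity of $\res^H_e$.

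For general (non-prime-order) $H$, $S(\rho_H)$ has nontrivial fixed-point subspheres under intermediate subgroups $K < H$, so one inducts on $|H|$, feeding the already-identified vanishing on proper subgroups through the isotropy filtration of $S(\rho_H)$ by its $K$-fixed subspheres. Assembling the contributions from each stratum, the collective vanishing of all the $\pi^H_{k\rho_H - \epsilon}(H\mM)$ is equivalent to $\res^H_K : \mM(G/H) \to \mM(G/K)$ being injective for every strict inclusion $K < H \leq G$. This inductive assembly is the main obstacle: one must carefully keep track of the kernels of restrictions stratum by stratum and match them to the isotropy layers of the cellular filtration. The fully detailed execution is \cite[Lemma 3.2]{HHR:Kervaire}.
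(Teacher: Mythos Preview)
Your proposal is correct and follows essentially the same approach as the paper's sketch: both first reduce a $0$-slice to an Eilenberg--Mac Lane spectrum $H\mM$ using Proposition~\ref{prop:Spheres}, then identify the obstruction group $[S^{\rho_H-1},H\mM]_H$ with the common kernel of the restriction maps from $\mM(G/H)$ to proper subgroups, and both defer the detailed execution to \cite[Lemma~3.2]{HHR:Kervaire}. Your version is a bit more explicit about the mechanism of that computation (the cofiber sequence $S(\rho_H)_+\to S^0\to S^{\rho_H}$ and the isotropy filtration), whereas the paper simply states the resulting formula; but the content is the same. One minor point: your claim that the cases $k\geq 2$ or $\epsilon=0$ ``follow by smashing with $S^{(k-1)\rho_H}$'' is a little glib---the cleaner justification is that these higher generators already lie in the localizing subcategory generated by the $S^{\rho_H-1}$ and the induced $S^1$'s, so vanishing on the minimal generators suffices.
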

\begin{proof}[Sketch of Proof]
Since $G_+\wedge_HS^1$ is in $\tau_{\geq 1}$ and since $\tau_{\geq 0}$ is the category of $(-1)$-connected spectra, we know that the zero slices are all of the form $H\mM$ for some Mackey functor $\mM$. We need only determine which are allowed.

The essential step is the equality
\[
[S^{\rho_G-1},H\mM]_G=\{x\in\mM(G/G) | \res_H^G(x)=0\quad\forall H\subsetneq G\},
\]
i.e. the collection of elements in $\mM(G/G)$ that map to zero under all restriction maps. Since our localizing subcategory is closed under induction and restriction, this shows that any element of $\mM(G/H)$ which restricts to zero in $\mM(G/K)$ for all subgroups $K\subsetneq H$ lifts to an element of 
\[
[G_{+}\wedge_{H}S^{\bar{\rho}_{H}},H\mM].
\] 
Thus if these elements are all zero, so are these collections of maps. It is easy to see that this implies that all restriction maps are injections.
\end{proof}

\begin{remark}
It is not the case that any $x\in\mM(G/G)=\underline{\pi_{0}}H\mM$ which restricts to zero in some $\mM(G/H)$ extends to a map $S^{\bar{\rho}_{G}}\to H\mM$. However, we do know that by iteratively killing these elements off, in some eventual cofiber, the element $x$ will restrict to zero in all proper subgroups.
\end{remark}

The previous theorem tells us the $0$-slice of any spectrum $X$.

\begin{cor}
The $0$-slice of a $G$-spectrum $X$ is $HP_0^0\underline{\pi_0}(X)$, where for a Mackey functor $\mM$, $P_0^0\mM$ is the largest quotient of $\mM$ in which all restriction maps are monomorphisms.

The value on $G/H$ of this quotient is $Im\big(\mM(G/H)\to\mM(G/\{e\})\big)$.
\end{cor}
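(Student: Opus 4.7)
The plan is to combine Theorem~\ref{thm:zeroslices} with a careful comparison between the slice tower of $X$ and the nullification map $X\to P^0(X)$. By Theorem~\ref{thm:zeroslices} the $0$-slice must take the form $P_0^0(X)\simeq H\mN$ for some Mackey functor $\mN$ whose restrictions are monomorphisms, and the task is to identify $\mN$ with the quotient of $\underline{\pi_0}(X)$ described in the statement. Applying $\underline{\pi_{\ast}}$ to the defining fiber sequence $P_0^0(X)\to P^0(X)\to P^{-1}(X)$ and invoking the corollary to Proposition~\ref{prop:Spheres} (so that $P^{-1}(X)$ is $0$-coconnected, killing both $\pi_0$ and $\pi_1$ of the rightmost term), the neighboring terms in the long exact sequence at degree zero vanish, so $\mN\cong\underline{\pi_0}(P^0 X)$.

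Next I would compare $P^0(X)$ with $X$ itself. The defining cofiber sequence $F\to X\to P^0(X)$ of the nullification has $F\in\tau_{\geq 1}\subseteq\tau_{\geq 0}$, so $F$ is $(-1)$-connected, and its long exact sequence produces a surjection $\underline{\pi_0}(X)\twoheadrightarrow\mN$. Restricting to the trivial subgroup, $i_e^{\ast}F$ lies in $\tau_{\geq 1}^{\{e\}}$; by the same reasoning used in the example identifying $\tau_{\geq 0}$ with the $(-1)$-connected spectra, $\tau_{\geq 1}^{\{e\}}$ is the category of $0$-connected non-equivariant spectra. Hence $\pi_0(i_e^{\ast}F)=0$, and evaluating at $G/\{e\}$ shows that $\underline{\pi_0}(X)(G/\{e\})\cong\mN(G/\{e\})$.

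To finish, since restrictions in $\mN$ are monomorphisms, each $\mN(G/H)$ embeds into $\mN(G/\{e\})$, and by naturality of the map of Mackey functors $\underline{\pi_0}(X)\to\mN$ the composite
\[
\underline{\pi_0}(X)(G/H)\twoheadrightarrow\mN(G/H)\hookrightarrow\mN(G/\{e\})\cong\underline{\pi_0}(X)(G/\{e\})
\]
agrees with the restriction on $\underline{\pi_0}(X)$. Therefore $\mN(G/H)$ is precisely the image of that restriction, which is the formula claimed. A short algebraic verification---using that restriction from $H$ to $\{e\}$ factors through restriction from $H$ to any intermediate subgroup, so injectivity of the former forces injectivity of the latter---then confirms that this image-Mackey-functor is indeed the largest quotient of $\underline{\pi_0}(X)$ with monomorphic restrictions, justifying the description of $P_0^0\underline{\pi_0}(X)$. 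The main point requiring care is that $P_0^0$ sits as a fiber rather than a quotient of $P^0$, so one must watch the direction of arrows when chasing the long exact sequences; once that is tracked, every identification is forced by the connectivity and coconnectivity results already in hand.
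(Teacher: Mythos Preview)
Your argument is correct. The paper states this corollary without proof, treating it as an immediate consequence of Theorem~\ref{thm:zeroslices} together with the connectivity bounds established before it; you have supplied exactly the verification the paper leaves implicit, chasing the two fiber sequences $P_0^0(X)\to P^0(X)\to P^{-1}(X)$ and $F\to X\to P^0(X)$ to pin down $\underline{\pi_0}$ of the zero slice as the claimed image Mackey functor.
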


\begin{cor}
A $G$-spectrum $X$ is in $\tau_{\geq 1}$ if and only if $X$ is in $\tau_{\geq 0}$ and 
\[
\underline{\pi_{0}(X)}(G/\{e\})=0.
\]
\end{cor}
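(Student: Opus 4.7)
The plan is to reduce both implications to the fiber sequence
\[
P_0^0(X) \to P^0(X) \to P^{-1}(X),
\]
together with two previously established facts: that $X \in \tau_{\geq n}$ is equivalent to $P^{n-1}(X) \simeq *$ (since $P^{n-1}$ is the nullification with acyclics $\tau_{\geq n}$), and the preceding corollary identifying $P_0^0(X)$ with $H(P_0^0\underline{\pi_0}(X))$, whose value on $G/H$ is the image of $\underline{\pi_0}(X)(G/H) \to \underline{\pi_0}(X)(G/\{e\})$.

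For the forward direction, the containment $\tau_{\geq 1} \subset \tau_{\geq 0}$ is immediate from the definition, so all that is left is the vanishing of $\underline{\pi_0}(X)(G/\{e\})$. I would obtain this by restricting along $i_{\{e\}}^{*}$: the proposition on restriction gives $i_{\{e\}}^{*}X \in \tau_{\geq 1}^{\{e\}}$, and for the trivial group the generators $S^{k\rho - \epsilon}$ with $k - \epsilon \geq 1$ are just spheres of dimension $\geq 1$, so $\tau_{\geq 1}^{\{e\}}$ is the category of $0$-connected spectra. Hence $\pi_0(i_{\{e\}}^{*}X) = \underline{\pi_0}(X)(G/\{e\}) = 0$.

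For the converse, assume $X \in \tau_{\geq 0}$ and $\underline{\pi_0}(X)(G/\{e\}) = 0$. The first hypothesis gives $P^{-1}(X) \simeq *$ directly. The second hypothesis forces every image appearing in the description of $P_0^0\underline{\pi_0}(X)$ to vanish (they all factor through the zero group $\underline{\pi_0}(X)(G/\{e\})$), so $P_0^0(X) \simeq *$ by the preceding corollary. The fiber sequence above then collapses, yielding $P^0(X) \simeq *$ and hence $X \in \tau_{\geq 1}$.

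There is no real obstacle; the argument is essentially bookkeeping once the previous corollary is in hand. The only substantive ingredient beyond that corollary is the identification of $\tau_{\geq 1}^{\{e\}}$ with $0$-connected spectra, which is what converts the equivariant statement into the concrete condition on $\underline{\pi_0}(X)(G/\{e\})$ in both directions.
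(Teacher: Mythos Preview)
Your argument is correct and matches the paper's intended deduction from the preceding corollary: the fiber sequence $P_0^0(X)\to P^0(X)\to P^{-1}(X)$ together with the identification $P_0^0(X)\simeq H\bigl(P_0^0\underline{\pi_0}(X)\bigr)$ reduces the question to whether $\underline{\pi_0}(X)(G/\{e\})$ vanishes. Your forward direction via restriction to the trivial group is a mild variant (avoiding the preceding corollary for that implication), but the substance is the same.
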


After this point, we no longer have nice, easy Mackey functor descriptions. However, many of the categories of slices are secretly naturally equivalent to the aforedescribed categories of Mackey functors. To see this, we introduce the final important tool we will use: slices commute with suspension by copies of the regular representation in the same way that the Postnikov sections commute with ordinary suspension.

\begin{thm}\label{thm:suspensions}
For any $X$, for any $n$, and for any $k$, 
\[
P^{n+k|G|}(\Sigma^{k\rho_G} X)=\Sigma^{k\rho_G}P^n(X),
\]
and hence
\[
P^{n+k|G|}_{n+k|G|}(\Sigma^{k\rho_G}X)=\Sigma^{k\rho_G}P^n_n(X).
\]
\end{thm}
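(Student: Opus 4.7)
The plan is to reduce everything to the behavior of $\Sigma^{k\rho_G}$ on the generators of $\tau_{\geq n}$. The basic input is the projection formula $S^{k\rho_G}\wedge(G_+\wedge_H Y)\simeq G_+\wedge_H(i_H^\ast S^{k\rho_G}\wedge Y)$ together with the observation $i_H^\ast\rho_G=[G:H]\cdot\rho_H$. Applying this to a generator $G_+\wedge_H S^{m\rho_H-\epsilon}$ of $\tau_{\geq n}$ gives
\[
\Sigma^{k\rho_G}(G_+\wedge_H S^{m\rho_H-\epsilon})\simeq G_+\wedge_H S^{(k[G:H]+m)\rho_H-\epsilon},
\]
and the relevant numerical invariant is $(k[G:H]+m)\cdot|H|-\epsilon=k|G|+m|H|-\epsilon\geq k|G|+n$. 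So $\Sigma^{k\rho_G}$ sends every generator of $\tau_{\geq n}$ into $\tau_{\geq n+k|G|}$.

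The first key step is to conclude from this that $\Sigma^{k\rho_G}(\tau_{\geq n})=\tau_{\geq n+k|G|}$ as full subcategories. One containment comes from the calculation above together with the fact that localizing subcategories are determined by their generators (closure under weak equivalences, cofibers, extensions, retracts and directed colimits all commute with $\Sigma^{k\rho_G}$, which is an invertible functor). The reverse containment is the same statement applied with $-k$ in place of $k$, since the calculation above is insensitive to the sign of $k$.

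The second step is to upgrade this to the claim about $P^n$. Smash the fiber sequence $P_{>n}(X)\to X\to P^n(X)$ with $S^{k\rho_G}$ to obtain a fiber sequence
\[
\Sigma^{k\rho_G}P_{>n}(X)\to \Sigma^{k\rho_G}X\to\Sigma^{k\rho_G}P^n(X).
\]
The first term lies in $\tau_{\geq n+k|G|+1}$ by the first step applied to $\tau_{\geq n+1}$. For the right-hand term, I need to verify that it is $(n+k|G|)$-coconnected. Given $A\in\tau_{\geq n+k|G|+1}$, the adjunction-style identity $[A,\Sigma^{k\rho_G}P^n(X)]=[\Sigma^{-k\rho_G}A,P^n(X)]$ reduces this to knowing $\Sigma^{-k\rho_G}A\in\tau_{\geq n+1}$, which is again the first step. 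By uniqueness of the localization, this identifies $\Sigma^{k\rho_G}P^n(X)$ with $P^{n+k|G|}(\Sigma^{k\rho_G}X)$. The slice statement is then automatic: applying $\Sigma^{k\rho_G}$ (which is exact) to the defining fiber sequence $P_n^n(X)\to P^n(X)\to P^{n-1}(X)$ and using the identification on both $P^n$ and $P^{n-1}$ yields the analogous fiber sequence computing $P_{n+k|G|}^{n+k|G|}(\Sigma^{k\rho_G}X)$.

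The only real obstacle is the sign issue in the first step — one might worry that the definition of $\tau_{\geq n}$ is asymmetric under suspension by $\rho_G$ — but the generating set is written in terms of a free integer parameter and a fixed $\epsilon\in\{0,1\}$, so the calculation $(k[G:H]+m)|H|-\epsilon=k|G|+m|H|-\epsilon$ does the right thing for any $k\in\Z$, and invertibility of $\Sigma^{k\rho_G}$ promotes a one-sided containment on generators to an equality of full subcategories.
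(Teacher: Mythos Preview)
Your argument is correct and follows essentially the same route as the paper: the paper reduces the theorem to the lemma $S^{k\rho_G}\wedge\tau_{\geq n}=\tau_{\geq n+k|G|}$, which it justifies by the one-line observation that smashing with $S^{k\rho_G}$ induces a bijection on generators, and you have simply spelled out that bijection via the projection formula and $i_H^\ast\rho_G=[G:H]\rho_H$, together with the standard passage from equality of acyclics to equality of localizations.
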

This is simply because we have an underlying statement about the localizing subcategories.

\begin{lemma}
For all $n$ and for all $k$, we have
\[
S^{k\rho_G}\wedge\tau_{\geq n}=\tau_{\geq (n+k|G|)}.
\]
\end{lemma}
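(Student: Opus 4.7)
The plan is to show both inclusions using the fact that smashing with an invertible object preserves localizing subcategories, so the statement reduces to comparing generators.

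First I would compute the effect of $S^{k\rho_G}\wedge(-)$ on the defining generators of $\tau_{\geq n}$. The key representation-theoretic identity is $i_H^{\ast}\rho_G = [G:H]\cdot\rho_H$, which gives
\[
S^{k\rho_G}\wedge(G_+\wedge_H S^{m\rho_H-\epsilon}) = G_+\wedge_H S^{(m+k[G:H])\rho_H-\epsilon}.
\]
If $m|H|-\epsilon\geq n$, then the new index is
\[
\bigl(m+k[G:H]\bigr)|H|-\epsilon = m|H|-\epsilon + k|G| \geq n+k|G|,
\]
so each smashed generator of $\tau_{\geq n}$ is a generator of $\tau_{\geq(n+k|G|)}$.

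Next I would observe that $S^{k\rho_G}\wedge(-)$ is an auto-equivalence of the $G$-equivariant stable homotopy category (with inverse $S^{-k\rho_G}\wedge(-)$), so it preserves cofiber sequences, wedges, retracts, and directed colimits. Consequently it sends localizing subcategories to localizing subcategories. Since it sends each generator of $\tau_{\geq n}$ into $\tau_{\geq(n+k|G|)}$, and the latter is localizing, we obtain the inclusion
\[
S^{k\rho_G}\wedge\tau_{\geq n}\subset\tau_{\geq(n+k|G|)}.
\]

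For the reverse inclusion I would apply the same argument with $-k$ in place of $k$ and with $n+k|G|$ in place of $n$, yielding
\[
S^{-k\rho_G}\wedge\tau_{\geq(n+k|G|)}\subset\tau_{\geq n}.
\]
Smashing this containment with $S^{k\rho_G}$ gives $\tau_{\geq(n+k|G|)}\subset S^{k\rho_G}\wedge\tau_{\geq n}$, completing the proof. There isn't really a hard step here: the only thing to verify is the restriction identity $i_H^{\ast}\rho_G=[G:H]\rho_H$, which is the standard double-coset/permutation-representation computation, and the invertibility of representation-sphere suspension in the genuine equivariant category, which is a foundational fact.
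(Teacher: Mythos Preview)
Your proof is correct and follows essentially the same approach as the paper: the paper simply remarks that smashing with $S^{k\rho_G}$ induces a bijection on the isomorphism classes of generators and commutes with the closure properties defining a localizing subcategory, and you have spelled out exactly that argument in detail (including the projection formula and the restriction identity $i_H^\ast\rho_G=[G:H]\rho_H$).
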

This result follows by noticing that smashing with $S^{n\rho_G}$ induces a bijection on isomorphism classes of generators and commutes with all of the properties.

\begin{cor}
The category of $(k|G|-1)$-slices is equivalent to the category of Mackey functors via the suspension by $k\rho_G$.
\end{cor}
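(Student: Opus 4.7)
The plan is to combine the identification of $(-1)$-slices with Mackey functors, already recorded in the two corollaries preceding Theorem~\ref{thm:zeroslices}, with the invertibility of $\Sigma^{k\rho_G}$ made explicit by Theorem~\ref{thm:suspensions}.

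First I would observe that $\Sigma^{k\rho_G}$ is an auto-equivalence of the genuine $G$-equivariant stable homotopy category, with quasi-inverse $\Sigma^{-k\rho_G}$. Applying Theorem~\ref{thm:suspensions} with $n=-1$ shows that $\Sigma^{k\rho_G}$ carries $(-1)$-slices to $(k|G|-1)$-slices; applying it with $n=k|G|-1$ and $-k$ in place of $k$ shows that $\Sigma^{-k\rho_G}$ carries $(k|G|-1)$-slices back to $(-1)$-slices. These restrictions remain mutually inverse since they already are on the ambient stable category, so $\Sigma^{k\rho_G}$ restricts to an equivalence between the full subcategory of $(-1)$-slices and the full subcategory of $(k|G|-1)$-slices.

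Next I would package the earlier corollaries into an equivalence between Mackey functors and $(-1)$-slices, given on objects by $\mM\mapsto\Sigma^{-1}H\mM$ with inverse $X\mapsto\underline{\pi_{-1}}(X)$: essential surjectivity and the fact that these are mutually inverse on objects are exactly the content of the corollary computing $P_{-1}^{-1}(X)\simeq\Sigma^{-1}H\underline{\pi_{-1}}(X)$, while full faithfulness reduces to the standard statement that the Eilenberg--Mac Lane functor is fully faithful on the Postnikov heart, i.e.\ $[\Sigma^{-1}H\mM,\Sigma^{-1}H\mN]_G=\mathrm{Hom}(\mM,\mN)$. Composing the two equivalences produces the desired equivalence $\mM\mapsto\Sigma^{k\rho_G-1}H\mM$ from Mackey functors to $(k|G|-1)$-slices. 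I do not anticipate any genuine obstacle beyond invoking the standard full faithfulness of $H$; both ingredients are already in place, and the proof is a direct bookkeeping composition.
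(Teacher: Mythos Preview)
Your argument is correct and is exactly the reasoning the paper has in mind: the corollary is stated without proof as an immediate consequence of Theorem~\ref{thm:suspensions} together with the earlier identification of $(-1)$-slices with {\EM} spectra $\Sigma^{-1}H\mM$. You have simply spelled out the composition of the suspension auto-equivalence with the equivalence between Mackey functors and $(-1)$-slices, which is precisely what is intended.
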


\begin{cor}
The category of $(k|G|)$-slices is equivalent to the category of Mackey functors in which all restrictions are injections via the suspension by $k\rho_G$.
\end{cor}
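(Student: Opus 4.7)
The plan is to reduce this to the preceding Theorem \ref{thm:zeroslices} (classification of $0$-slices) via the compatibility of the slice filtration with suspension by $k\rho_G$. The key tools are already in place: the lemma preceding the corollary states $S^{k\rho_G}\wedge \tau_{\geq n} = \tau_{\geq (n+k|G|)}$, and Theorem \ref{thm:suspensions} upgrades this to $P^{n+k|G|}(\Sigma^{k\rho_G}X) = \Sigma^{k\rho_G}P^n(X)$. Since $k\rho_G$ is an actual (virtual, if $k<0$) representation, the functor $\Sigma^{k\rho_G}$ is an autoequivalence of the $G$-equivariant stable homotopy category, with inverse $\Sigma^{-k\rho_G}$.

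First I would observe that $Y$ is a $(k|G|)$-slice if and only if $\Sigma^{-k\rho_G}Y$ is a $0$-slice. One direction uses the lemma: if $\Sigma^{-k\rho_G}Y\in \tau_{\geq 0}\setminus \tau_{\geq 1}$, then $Y = \Sigma^{k\rho_G}\Sigma^{-k\rho_G}Y\in \tau_{\geq k|G|}\setminus \tau_{\geq k|G|+1}$. For the ``$\leq$'' condition, $Y = P^{k|G|}Y$ if and only if $\Sigma^{-k\rho_G}Y = \Sigma^{-k\rho_G}P^{k|G|}Y = P^0(\Sigma^{-k\rho_G}Y)$ by Theorem \ref{thm:suspensions}. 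Thus $\Sigma^{k\rho_G}$ and $\Sigma^{-k\rho_G}$ restrict to inverse equivalences between the category of $0$-slices and the category of $(k|G|)$-slices.

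Composing with Theorem \ref{thm:zeroslices}, which identifies the category of $0$-slices with Mackey functors having injective restrictions (via $H(-)$ and $\underline{\pi_0}(-)$), yields the desired equivalence: $\mM \mapsto \Sigma^{k\rho_G}H\mM$ from Mackey functors with injective restrictions to $(k|G|)$-slices, with quasi-inverse $Y\mapsto \underline{\pi_0}(\Sigma^{-k\rho_G}Y)$.

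There is no real obstacle here; the statement is a formal consequence of the two previously established results. The only thing worth being careful about is that suspension by $k\rho_G$ preserves both halves of the slice condition (being $\geq n$ and being $\leq n$), and this is exactly what the lemma and Theorem \ref{thm:suspensions} encode.
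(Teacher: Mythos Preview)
Your proposal is correct and is exactly the argument the paper intends: the corollary is stated without proof immediately after the suspension lemma and Theorem~\ref{thm:suspensions}, and your derivation via $\Sigma^{k\rho_G}$ transporting $0$-slices to $(k|G|)$-slices and then invoking Theorem~\ref{thm:zeroslices} is the intended one. There is nothing to add.
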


Suspension invariance of the categories $\tau_{\geq n}$ will allow us to determine the slice-connectivity of negative spheres.
\begin{prop}\label{prop:NegativeSpheres}
For $n\geq 1$, the spectrum $G_{+}\wedge_{H}S^{-n}$ is in $\tau_{\geq -(n-1)|H|-1}$ but not in $\tau_{\geq -(n-1)|H|}$.
\end{prop}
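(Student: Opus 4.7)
The plan is to reduce the problem to a purely $H$-equivariant computation and then use the $\rho_H$-suspension invariance to strip off a large chunk of the slice filtration, leaving a small sphere whose slice-connectivity can be read off directly from the explicit descriptions of $\tau_{\geq 0}^H$ and $\tau_{\geq -1}^H$ given earlier.

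First I would establish that for every $H$-spectrum $X$ one has $\sdim^G(G_+\wedge_H X)=\sdim^H(X)$. One inequality is immediate from the inclusion $G_+\wedge_H\tau_{\geq m}^H\subseteq\tau_{\geq m}^G$. For the reverse, the inclusion $i_H^*\tau_{\geq m}^G\subseteq\tau_{\geq m}^H$ together with the fact that $X$ is a wedge summand of $i_H^*(G_+\wedge_H X)$ and the retract-closure of $\tau_{\geq m}^H$ forces $\sdim^H(X)\ge \sdim^G(G_+\wedge_H X)$. Thus it suffices to compute $\sdim^H(S^{-n})$.

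Next, I would write the virtual representation $-n$ of $H$ as
\[
-n=-(n-1)\rho_H+\bigl((n-1)\bar\rho_H-1\bigr),
\]
so that $S^{-n}\simeq S^{-(n-1)\rho_H}\wedge S^{(n-1)\bar\rho_H-1}$. Applying Theorem \ref{thm:suspensions} (more precisely the underlying lemma $S^{k\rho_H}\wedge\tau_{\geq m}^H=\tau_{\geq m+k|H|}^H$) to shift off the $-(n-1)\rho_H$-factor gives
\[
\sdim^H(S^{-n})=\sdim^H\bigl(S^{(n-1)\bar\rho_H-1}\bigr)-(n-1)|H|.
\]
The problem is thereby reduced to showing that $S^{(n-1)\bar\rho_H-1}$ has slice-connectivity exactly $-1$.

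For this final step I would invoke the identifications $\tau_{\geq 0}^H=\{(-1)\text{-connected $H$-spectra}\}$ and $\tau_{\geq -1}^H=\{(-2)\text{-connected $H$-spectra}\}$ from the earlier examples. The representation sphere $S^{(n-1)\bar\rho_H}$ has $K$-fixed points $S^{(n-1)(|H|/|K|-1)}$ for each $K\leq H$; this exponent is always non-negative and equals $0$ precisely when $K=H$, so $S^{(n-1)\bar\rho_H}$ is equivariantly $(-1)$-connected but not $0$-connected. Desuspending once, $S^{(n-1)\bar\rho_H-1}$ is $(-2)$-connected but not $(-1)$-connected, placing it in $\tau_{\geq -1}^H\setminus\tau_{\geq 0}^H$ and giving the claimed slice-connectivity $-1$. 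Chaining together the three reductions produces the stated value $-(n-1)|H|-1$.

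The main subtlety is making sure the suspension lemma is applied with the negative coefficient $-(n-1)$: one needs that smashing with $S^{\rho_H}$ is invertible on the stable category, so that $S^{k\rho_H}\wedge\tau_{\geq m}^H=\tau_{\geq m+k|H|}^H$ holds for every $k\in\Z$, not just for $k\geq 0$. Everything else reduces to the routine fixed-point arithmetic for $S^{(n-1)\bar\rho_H}$.
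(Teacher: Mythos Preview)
Your argument is correct and follows exactly the paper's route: reduce to the ambient group via the induction/restriction equality of slice-connectivity, then use $\rho$-suspension invariance to rewrite $S^{-n}$ as $S^{-(n-1)\rho}\wedge S^{(n-1)\bar\rho-1}$ and read off that $S^{(n-1)\bar\rho-1}$ lies in $\tau_{\geq -1}\setminus\tau_{\geq 0}$ from its fixed points. You are simply more explicit than the paper about the two-sided inequality $\sdim^G(G_+\wedge_H X)=\sdim^H(X)$ and the fixed-point arithmetic, but nothing is genuinely different.
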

\begin{proof}
Since the spectrum is induced and since induction preserves slice-connectivity, it suffices to prove this for $H=G$. By our suspension invariance, showing that $S^{-n}$ is in $\tau_{\geq -(n-1)|G|-1}$ but not in $\tau_{\geq -(n-1)|G|}$ is equivalent to showing that  $S^{(n-1)\rho-n}$ is in $\tau_{\geq -1}$ but not in $\tau_{\geq 0}$. This virtual representation sphere is $S^{(n-1)\bar{\rho}-1}$, and by looking at fixed points, we see that this is $(-2)$-connected but not $(-1)$-connected.
\end{proof}

With our understanding of connectivity and of which spheres occur in which slice-connectivities, convergence of the slice tower is actually relatively straightforward to prove. All of the generators of $\tau_{\geq n}$ are at least $(n/|G|-1)$-connected if $n\geq 0$ and $(n-1)$-connected if $n\leq -1$. This means that all elements in $\tau_{\geq n}$ share these connectivity lower bounds. Thus $\bigcap \tau_{\geq n}$ is the full subcategory of weakly contractible spectra. Proposition~\ref{prop:NegativeSpheres} shows that $\bigcup\tau_{\geq n}$ is the full subcategory of bounded below spectra. Thus the limit of the slice tower is $X$ and the colimit is contractible. 

We close the section with a few remarks about slice filtration and smash products. In general, if $X\in\tau_{\geq n}$ and $Y\in\tau_{\geq m}$, then we know very little about the slice-connectivity of $X\wedge Y$ (even if both $m$ and $n$ are non-negative). There are several cases in which we know more.

\begin{prop}\label{prop:SmashPositive}
If $X$ is in $\tau_{\geq 0}$ and $Y$ is in $\tau_{\geq n}$, then $X\wedge Y$ is in $\tau_{\geq n}$.
\end{prop}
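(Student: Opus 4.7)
The plan is to fix $X\in\tau_{\geq 0}$ and introduce the full subcategory
\[
\mathcal{D}_X := \{Y : X\wedge Y\in\tau_{\geq n}\}.
\]
Because smashing with $X$ commutes with cofibers, retracts, arbitrary wedges, and directed colimits, the listed closure properties of $\tau_{\geq n}$ transfer directly to $\mathcal{D}_X$, so $\mathcal{D}_X$ is itself a localizing subcategory of $G$-spectra. It therefore suffices to verify that $\mathcal{D}_X$ contains every generator of $\tau_{\geq n}$, that is, every spectrum of the form $G_+\wedge_H S^{k\rho_H - \epsilon}$ with $\epsilon\in\{0,1\}$ and $k|H|-\epsilon\geq n$.

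The projection formula identifies $X\wedge(G_+\wedge_H S^{k\rho_H-\epsilon})$ with $G_+\wedge_H(i_H^\ast X\wedge S^{k\rho_H-\epsilon})$. The earlier proposition on induction and restriction shows that induction from $H$ sends $\tau_{\geq m}^H$ into $\tau_{\geq m}^G$ and that restriction sends $\tau_{\geq 0}^G$ into $\tau_{\geq 0}^H$, so it is enough to prove the following statement at the level of $H$: for every $W\in\tau_{\geq 0}^H$ and every $k,\epsilon$, the spectrum $W\wedge S^{k\rho_H-\epsilon}$ lies in $\tau_{\geq k|H|-\epsilon}^H$. Once we have this, the hypothesis $k|H|-\epsilon\geq n$ and the inclusion $\tau_{\geq k|H|-\epsilon}^H\subseteq\tau_{\geq n}^H$ finish the job.

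For $\epsilon=0$, this is exactly the suspension lemma $S^{k\rho_H}\wedge\tau_{\geq 0}^H=\tau_{\geq k|H|}^H$. For $\epsilon=1$, recall from the examples that $\tau_{\geq 0}^H$ consists of $(-1)$-connected $H$-spectra while $\tau_{\geq -1}^H$ consists of $(-2)$-connected $H$-spectra. Since $W$ is $(-1)$-connected, $W\wedge S^{-1}$ is $(-2)$-connected and hence lies in $\tau_{\geq -1}^H$. A second application of the suspension lemma then gives
\[
W\wedge S^{k\rho_H-1} \;=\; (W\wedge S^{-1})\wedge S^{k\rho_H} \;\in\; \tau_{\geq -1+k|H|}^H \;=\; \tau_{\geq k|H|-1}^H,
\]
completing the verification.

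The main potential obstacle is exactly what the warning highlights: desuspension does not in general decrease slice-connectivity by one. The argument sidesteps this because the only desuspension that appears is by a single $S^{-1}$ applied to a $(-1)$-connected spectrum, the one regime where the Postnikov-style descriptions of $\tau_{\geq 0}$ and $\tau_{\geq -1}$ make the shift trivially correct. All remaining bookkeeping is absorbed into the regular-representation suspension lemma, which is exact on the nose.
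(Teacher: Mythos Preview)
Your argument is correct. It differs from the paper's in one structural choice: you fix $X\in\tau_{\geq 0}$ and check that $\mathcal D_X$ contains the generators of $\tau_{\geq n}$, whereas the paper fixes $Y\in\tau_{\geq n}$ and checks the generators of $\tau_{\geq 0}$. Because $\tau_{\geq 0}$ is generated simply by the orbits $G/H_+$, the paper's reduction collapses to the single observation $G/H_+\wedge Y\simeq G_+\wedge_H i_H^\ast Y$, which lies in $\tau_{\geq n}$ since restriction and induction each preserve slice-connectivity. Your route instead has to handle all of the more complicated generators $G_+\wedge_H S^{k\rho_H-\epsilon}$ of $\tau_{\geq n}$, which is why you need both the regular-representation suspension lemma and the explicit Postnikov descriptions of $\tau_{\geq 0}$ and $\tau_{\geq -1}$ to dispose of the $\epsilon=1$ case. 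Both arguments are short, but the paper's avoids the desuspension issue entirely rather than having to justify why it is harmless in this particular spot.
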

\begin{proof}
The category $\tau_{\geq 0}$ is generated by $G/H_{+}$ for $H$ a subgroup. Smashing $Y$ with this is equivalent to taking the restriction to $H$ of $Y$ and then inducing back to $G$, and we have already seen that $\tau_{\geq n}$ is closed under these operations. 
\end{proof}

\begin{cor}\label{cor:SmashOrderG}
If $X$ is in $\tau_{\geq k|G|}$ and $Y$ is in $\tau_{\geq n}$, then $X\wedge Y$ is in $\tau_{\geq k|G|+n}$.
\end{cor}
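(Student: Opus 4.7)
The plan is to reduce to Proposition~\ref{prop:SmashPositive} by shifting $X$ into $\tau_{\geq 0}$ via regular representation suspension, applying the already-known smash closure there, and then shifting back. The key tool is the lemma preceding Corollary~\ref{cor:SmashOrderG} that gives $S^{k\rho_G}\wedge \tau_{\geq n} = \tau_{\geq n+k|G|}$ for all integers $k$, so that in particular $S^{-k\rho_G}\wedge X \in \tau_{\geq 0}$ whenever $X\in \tau_{\geq k|G|}$.

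Concretely, first I would use that lemma to write $X \simeq S^{k\rho_G}\wedge X'$ with $X'\in \tau_{\geq 0}$ (just set $X' = S^{-k\rho_G}\wedge X$; the equality of localizing subcategories is an honest equality under smash with invertible spheres, so this is reversible). Next, by Proposition~\ref{prop:SmashPositive}, $X'\wedge Y \in \tau_{\geq n}$ since $X'\in \tau_{\geq 0}$ and $Y\in \tau_{\geq n}$. Finally, smashing with $S^{k\rho_G}$ and invoking the lemma once more gives
\[
X\wedge Y \simeq S^{k\rho_G}\wedge (X'\wedge Y) \in S^{k\rho_G}\wedge \tau_{\geq n} = \tau_{\geq n+k|G|},
\]
which is the desired conclusion.

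There isn't really a main obstacle here; the argument is a two-line chase using regular-representation suspension to move $X$ into the range where Proposition~\ref{prop:SmashPositive} applies. The only subtlety worth checking is that smashing with $S^{k\rho_G}$ is an equivalence of categories of $G$-spectra (so one can freely desuspend and resuspend), and that this equivalence carries $\tau_{\geq m}$ onto $\tau_{\geq m+k|G|}$ in both directions — but this is exactly the content of the preceding lemma.
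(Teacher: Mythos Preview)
Your proof is correct and is essentially identical to the paper's: desuspend $X$ by $k\rho_G$ to land in $\tau_{\geq 0}$, apply Proposition~\ref{prop:SmashPositive}, and resuspend using the suspension-invariance lemma. The paper phrases it in two lines, but the logical content is the same.
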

\begin{proof}
By suspension invariance, $\Sigma^{-k\rho_{G}}X\in\tau_{\geq 0}$. Proposition~\ref{prop:SmashPositive} then tells us that $\Sigma^{-k\rho_{G}}X\wedge Y\in\tau_{\geq n}$, and by suspension invariance again, $X\wedge Y\in\tau_{k|G|+n}$.
\end{proof}

\section{Slice-connectivity and underlying dimension}\label{sec:Dimensions}
At this point, we leave the more familiar waters of Hill-Hopkins-Ravenel and begin our more specific discussions of examples of slices. We start with a surprisingly useful generalization of Proposition~\ref{prop:Spheres}.

\begin{lemma}\label{lem:Suspensions}
If $Y$ is in $\tau_{\geq m}$, then $\Sigma Y$ is in $\tau_{\geq (m+1)}$.
\end{lemma}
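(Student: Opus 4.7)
The plan is to reduce to the generators of $\tau_{\geq m}$ and treat the two types separately. Let $\mathcal C$ be the class of $Y\in\tau_{\geq m}$ for which $\Sigma Y\in\tau_{\geq (m+1)}$. Since $\Sigma$ is an equivalence of the stable category, it commutes with cofibers, extensions, retracts, wedges, and directed colimits; together with the fact that $\tau_{\geq (m+1)}$ is closed under all of these operations, this shows that $\mathcal C$ is itself closed under the operations defining the localizing subcategory $\tau_{\geq m}$. Hence it suffices to check that the generators $G_+\wedge_H S^{k\rho_H-\epsilon}$, with $k|H|-\epsilon\geq m$ and $\epsilon\in\{0,1\}$, all lie in $\mathcal C$.

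When $\epsilon=1$, suspension simply turns the generator into $G_+\wedge_H S^{k\rho_H}$, which is a generator of $\tau_{\geq k|H|}$, and the inequality $k|H|-1\geq m$ gives $k|H|\geq m+1$, so this generator lies in $\tau_{\geq(m+1)}$ as required.

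The interesting case is $\epsilon=0$, where the suspension $G_+\wedge_H S^{k\rho_H+1}$ is no longer one of our listed generators. Here I will first commute the suspension past the induction, reducing (via the containment $G_+\wedge_H\tau_{\geq n}^H\subset\tau_{\geq n}^G$) to showing $S^{k\rho_H+1}\in\tau_{\geq k|H|+1}^H$. By the lemma preceding Corollary~2.19 (suspension invariance, $S^{k\rho_H}\wedge\tau_{\geq 1}^H=\tau_{\geq k|H|+1}^H$), this is in turn equivalent to the statement $S^1\in\tau_{\geq 1}^H$, which is exactly the $n=1$ instance of Proposition~\ref{prop:Spheres} applied inside $H$.

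The main obstacle is genuinely the $\epsilon=0$ case: suspending a $k\rho_H$-sphere does not land on one of the distinguished generators, so I really do need to combine the induction/restriction compatibility with suspension invariance by $\rho_H$ and the positive-sphere case of Proposition~\ref{prop:Spheres} to absorb the extra $S^1$. Once that is done, the closure argument immediately promotes the generator-level statement to the full lemma.
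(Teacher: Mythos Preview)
Your proof is correct and logically independent of the paper's argument, but the two take genuinely different routes.

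The paper does not reduce to generators at all. Instead it argues directly for an arbitrary $Y\in\tau_{\geq m}$ by smashing $Y$ with the cofiber sequence
\[
\Sigma S(\bar\rho_G)_+ \to S^1 \to S^{\rho_G},
\]
and running an induction on $|G|$: the right-hand term gives $S^{\rho_G}\wedge Y\in\tau_{\geq m+|G|}\subset\tau_{\geq m+1}$ by suspension invariance, while the left-hand term is built from cells $G/H_+\wedge S^k$ with $H\subsetneq G$ and $k\geq 1$, so the inductive hypothesis (the lemma for smaller groups) places $\Sigma S(\bar\rho_G)_+\wedge Y$ in $\tau_{\geq m+1}$ as well. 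This is literally the same argument that proves Proposition~\ref{prop:Spheres}, which is why the paper opens its proof with ``The proof of Proposition~\ref{prop:Spheres} actually shows this.''

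Your argument instead packages that work as a black box: you reduce to generators via the closure properties of $\tau_{\geq m+1}$, dispatch the $\epsilon=1$ generators trivially, and for $\epsilon=0$ you peel off induction and $S^{k\rho_H}$ to land on the single statement $S^1\in\tau_{\geq 1}^H$, which you then cite from Proposition~\ref{prop:Spheres}. This is more modular and makes transparent that the only nontrivial input is the $n=1$ case of Proposition~\ref{prop:Spheres}; the paper's version, by contrast, re-runs the same inductive cofiber-sequence machinery in situ and thereby exhibits the geometric mechanism (the decomposition of $S^1$ relative to $S^{\rho_G}$) that is doing the work in both proofs.
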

\begin{proof}
The proof of Proposition~\ref{prop:Spheres} actually shows this. We quickly review it here. The proof is by induction on the order of the group. The base-case is the non-equivariant statement that if $Y$ is $(m-1)$-connected, then $\Sigma Y$ is $m$-connected. We have a cofiber sequence
\[
\Sigma S(\rho-1)_{+}\to S^{1}\to S^{\rho},
\]
so smashing with $Y$ yields another cofiber sequence. The group $G$ acts without fixed points on $\Sigma S(\rho-1)_{+}$, and it is $0$-connected. Thus it is built out of cells of the form $G/H_+\wedge S^k$ for $k\geq 1$ and $H\subsetneq G$. By the induction hypothesis, 
\[
\Sigma S(\rho-1)_{+}\wedge Y
\]
is in $\tau_{\geq (m+1)}$. Similarly $S^{\rho}\wedge Y$, being in $\tau_{\geq (|G|+m)}$, is in $\tau_{\geq (m+1)}$. 
\end{proof}

This gives two closer connections between the slice and Postnikov towers. First, we can relate connectivity and slice-connectivity.

\begin{cor}
If $X$ is $(n-1)$-connected with $n\geq 0$, and if $Y$ is in $\tau_{\geq m}$, then 
$X\wedge Y$ is in $\tau_{\geq (n+m)}$.
\end{cor}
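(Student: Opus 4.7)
The plan is to reduce $X$ to a $G$-CW-spectrum, filter it by cells, and verify the statement one cell at a time using Lemma~\ref{lem:Suspensions}. Since $X$ is $(n-1)$-connected with $n\geq 0$, I would first replace $X$ by a weakly equivalent $G$-CW-spectrum whose cells are all of the form $(G/H)_+\wedge S^k$ with $k\geq n$; such a model exists by standard equivariant obstruction theory. Writing this CW-spectrum as a sequential colimit $X=\mathrm{hocolim}\, X_i$ with $X_0=\ast$ and cofiber sequences
\[
X_{i-1}\to X_i\to \bigvee_{\alpha}(G/H_\alpha)_+\wedge S^{k_\alpha}
\]
(all $k_\alpha\geq n$), I would smash the whole filtration with $Y$. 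Since $\tau_{\geq m+n}$ is closed under wedges, extensions, and directed colimits, everything reduces to showing that each generating cell $(G/H)_+\wedge S^k\wedge Y$ with $k\geq n$ lies in $\tau_{\geq m+n}$.

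For that generating cell, I would rewrite
\[
(G/H)_+\wedge S^k\wedge Y\simeq G_+\wedge_H(S^k\wedge i_H^\ast Y).
\]
The earlier proposition comparing restriction and induction with the slice categories yields $i_H^\ast Y\in\tau_{\geq m}^H$. Applying Lemma~\ref{lem:Suspensions} at the group $H$ a total of $k$ times upgrades this to $S^k\wedge i_H^\ast Y\in\tau_{\geq m+k}^H$, and since $k\geq n$ we have $\tau_{\geq m+k}^H\subseteq\tau_{\geq m+n}^H$. Finally, induction $G_+\wedge_H(-)$ carries $\tau_{\geq m+n}^H$ into $\tau_{\geq m+n}^G$, completing the reduction.

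There is no genuine obstacle here: every ingredient---the cellular model for a connected spectrum, the closure properties of $\tau_{\geq r}$, the compatibility of induction and restriction with slice-connectivity, and the suspension lemma---has already been established. The only step meriting care is the initial reduction to a CW-model with cells in dimension at least $n$, which is standard for equivariant spectra. One can also observe that the case $n=0$ recovers Proposition~\ref{prop:SmashPositive}, so the novelty of this corollary lies in the gain of $n$ in slice-connectivity that one purchases from the ordinary connectivity of $X$.
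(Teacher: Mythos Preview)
Your proof is correct and follows essentially the same approach as the paper: replace $X$ by a $G$-CW model with cells in dimensions $\geq n$, then handle each cell $(G/H)_+\wedge S^k\wedge Y$ via restriction, iterated application of Lemma~\ref{lem:Suspensions}, and induction. The paper's proof is a one-sentence sketch of exactly this argument (``$X$ can be built out of suspensions of $S^n$ and induced cells; iterated applications of Lemma~\ref{lem:Suspensions} then gives the result''), so your version simply makes the closure properties and the restriction/induction steps explicit.
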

\begin{proof}
The connectivity assumption shows that $X$ can be built out of suspensions of $S^{n}$ and induced cells. Iterated applications of Lemma~\ref{lem:Suspensions} then gives the result.
\end{proof}

We can also slightly refine Theorem~\ref{thm:suspensions}, providing a connection between slices and suspensions. This follows from the inclusion 
\[
S^{1}\wedge\tau_{\geq k}\subset \tau_{\geq (k+1)}.
\]

\begin{cor}
We have a natural map
\[
\Sigma P^{k}(\Sigma^{-1}X)\to P^{k+1}(X).
\]
\end{cor}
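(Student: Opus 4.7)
The plan is to construct the map by combining the naturality of the slice localization with the inclusion $S^{1}\wedge\tau_{\geq (k+1)}\subset\tau_{\geq (k+2)}$ highlighted just before the statement. I would first write down the canonical localization map $\Sigma^{-1}X\to P^{k}(\Sigma^{-1}X)$, and let $F$ denote its fiber. By the defining property of the slice localization, $F$ lies in $\tau_{\geq (k+1)}$.

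Next, I would suspend the fiber sequence $F\to\Sigma^{-1}X\to P^{k}(\Sigma^{-1}X)$ to obtain the (co)fiber sequence
\[
\Sigma F\to X\to \Sigma P^{k}(\Sigma^{-1}X).
\]
By Lemma~\ref{lem:Suspensions} (equivalently, the inclusion $S^{1}\wedge\tau_{\geq (k+1)}\subset \tau_{\geq (k+2)}$ remarked on in the excerpt), $\Sigma F$ lies in $\tau_{\geq (k+2)}$. Since $P^{k+1}(X)$ is, by construction, local with respect to $\tau_{\geq (k+2)}$, the composite
\[
\Sigma F\to X\to P^{k+1}(X)
\]
is null-homotopic, so the canonical map $X\to P^{k+1}(X)$ factors through the cofiber $\Sigma P^{k}(\Sigma^{-1}X)$. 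This factorization is the asserted map.

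For naturality, I would simply observe that the construction is entirely in terms of universal properties: the localization maps $\Sigma^{-1}X\to P^{k}(\Sigma^{-1}X)$ and $X\to P^{k+1}(X)$ are natural in $X$, the suspended fiber sequence is natural, and the factorization through the cofiber is unique up to contractible choice because $[\Sigma F,P^{k+1}(X)]=0$. There is no serious obstacle here; the only point worth emphasizing is that one must not try to desuspend slice-connectivity (which is forbidden by the warning in the excerpt), and that is why one passes from $\Sigma^{-1}X$ to $X$ by suspending a fiber sequence rather than desuspending anything on the target side.
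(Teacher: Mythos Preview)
Your argument is correct and is exactly the standard fleshing-out of the paper's one-line justification: the paper simply records the inclusion $S^{1}\wedge\tau_{\geq k}\subset\tau_{\geq(k+1)}$ and leaves the reader to deduce the map, and you have supplied precisely that deduction via the fiber of the localization and the universal property of $P^{k+1}$. One tiny nit: the \emph{uniqueness} of the factorization in the homotopy category comes from $[\Sigma^{2}F,P^{k+1}(X)]=0$ rather than $[\Sigma F,P^{k+1}(X)]=0$, but since $\Sigma^{2}F\in\tau_{\geq(k+3)}\subset\tau_{\geq(k+2)}$ this is equally immediate.
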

\noindent For $X=S^0$ and $k=-1$, this gives the natural map $H\mA\to H\mZ$.

We now develop two closely related criteria which establish some bounds on the slice size of skeleta of a finite $G$ $CW$ spectrum $X$ smashed with an arbitrary spectrum $Y$. The two variants depend on which factor we have tight control of. We first assume some control on $Y$, deducing results about the slice-connectivity of skeleta of $X$.

\subsection{Slice-connectivity of skeleta}
As a bit of notation, if $X$ is an equivariant CW-spectrum, let $X^{[k]}$ denote its $k$-skeleton. We begin by generalizing the result that $S^n$ is in $\tau_{\geq n}$ for positive $n$.

\begin{thm}\label{thm:DimCW}
Let $n$ be a non-negative integer, and let $Y$ be in $\tau_{\geq m}$. If $X$ is an $n$-dimensional $G$ CW-complex such that $X\wedge Y$ is in $\tau_{\geq (n+m)}$, then $X^{[k]}\wedge Y$ is in $\tau_{\geq (k+m)}$ for all $0\leq k\leq n$.
\end{thm}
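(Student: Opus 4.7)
The plan is to carry out a downward induction on $k$, from $k=n$ to $k=0$. The base case $k=n$ is immediate, since $X^{[n]}=X$ and $X\wedge Y\in\tau_{\geq n+m}$ by hypothesis. For the inductive step, suppose $X^{[k]}\wedge Y\in\tau_{\geq k+m}$; I want to deduce $X^{[k-1]}\wedge Y\in\tau_{\geq (k-1)+m}$.

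The key input is the attaching-map cofiber sequence describing the transition from $X^{[k-1]}$ to $X^{[k]}$. If the $k$-cells of $X$ are indexed by $\alpha$ with isotropy $H_\alpha$, the (rotated) cofiber sequence
\[
\bigvee_{\alpha} G_+\wedge_{H_\alpha}S^{k-1}\xrightarrow{\text{attaching}} X^{[k-1]}\to X^{[k]}
\]
exhibits $X^{[k]}$ as the cofiber of the attaching maps from $X^{[k-1]}$. Smashing with $Y$ gives another cofiber sequence, and since $\tau_{\geq(k-1)+m}$ is closed under extensions, it suffices to place both end terms in $\tau_{\geq(k-1)+m}$.

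The right-hand term $X^{[k]}\wedge Y$ lies in $\tau_{\geq k+m}\subset\tau_{\geq(k-1)+m}$ by the inductive hypothesis. For the wedge on the left, I would use restriction and induction. Since restriction preserves slice-connectivity, $i_{H_\alpha}^* Y\in\tau_{\geq m}^{H_\alpha}$. Because $k-1\geq 0$ throughout the induction, iterating Lemma~\ref{lem:Suspensions} gives $S^{k-1}\wedge i_{H_\alpha}^* Y\in\tau_{\geq(k-1)+m}^{H_\alpha}$. The projection formula rewrites $G_+\wedge_{H_\alpha}S^{k-1}\wedge Y$ as $G_+\wedge_{H_\alpha}(S^{k-1}\wedge i_{H_\alpha}^* Y)$, and since induction preserves slice-connectivity and $\tau_{\geq *}$ is closed under wedges, the entire left-hand term is in $\tau_{\geq(k-1)+m}^G$. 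Closure under extensions then completes the inductive step.

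The main subtlety is one of direction: $\tau_{\geq n}$ is not closed under fibers, so one cannot simply ``peel off'' the top cells of $X^{[k]}$ to get a statement about $X^{[k-1]}$. Rotating the attaching-cell cofiber sequence turns this peeling into an extension problem, which the localizing subcategories do handle, while Lemma~\ref{lem:Suspensions} supplies precisely the right increment in slice-connectivity at each suspension. Apart from keeping track of this rotation, the argument is a formal unwinding; no delicate estimates are needed.
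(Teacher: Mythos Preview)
Your proof is correct and follows essentially the same approach as the paper: a downward induction on the skeleton using the attaching-map cofiber sequence, with Lemma~\ref{lem:Suspensions} handling the wedge of spheres and extension-closure of $\tau_{\geq(k-1)+m}$ placing the skeleton in the desired category. You spell out the restriction/induction step for the left-hand wedge more explicitly than the paper does, but the argument is otherwise identical.
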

\begin{proof}
We proceed by induction on the codimension. The base case of codimension $0$ is by definition. Assume that the codimension $(n-k-1)$-skeleton is greater than or equal to $(k+1+m)$. We have a cofiber sequence
\[
\bigvee_{\alpha} G_+\wedge_{H_{\alpha}} S^k\wedge Y\to X^{[k]}\wedge Y\to X^{[k+1]}\wedge Y,
\]
where the wedge is taken over all $(k+1)$-cells $e_{\alpha}$ of $X$. By assumption, the rightmost term is greater than $(k+m)$. By Lemma~\ref{lem:Suspensions}, the leftmost term is greater than or equal to $(k+m)$. Since we are looking at a localizing subcategory, we conclude that the middle term is greater than or equal to $(k+m)$, as required.
\end{proof}

Applying this to $Y=S^{0}$ shows us that if $X$ is an $n$-dimensional complex which is in $\tau_{\geq n}$, then the $k$-skeleton is in $\tau_{\geq k}$.

\begin{remark}
It is obviously not true that every $n$-dimensional $G$-CW-complex is in $\tau_{\geq n}$. If we wedge a copy of $S^{0}$ onto a general element of $\tau_{\geq n}$, then we drop the slice-connectivity to $0$. Even if we insist on irreducible complexes, the result is far from true, as the representation sphere example below shows.
\end{remark}

If $X$ is in $\tau_{\geq m}$, then $\Sigma^{n\rho}X$ is in $\tau_{\geq (np+m)}$. Applying this and Theorem~\ref{thm:DimCW} to the spheres $S^\ell$ for $\ell\geq -1$, we deduce the following corollary.
\begin{cor}
For all $\ell \geq -1$ and for all $k\leq n|G|+\ell$, $\big(S^{n\rho+\ell}\big)^{[k]}$ is greater than or equal to $k$.
\end{cor}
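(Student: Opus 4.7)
The plan is to apply Theorem~\ref{thm:DimCW} with $Y = S^{0}$ (so $m = 0$) and $X = S^{n\rho+\ell}$, viewed as a $G$-CW-spectrum of dimension $n|G|+\ell$. In that case the theorem's conclusion reads $(S^{n\rho+\ell})^{[k]} = (S^{n\rho+\ell})^{[k]}\wedge S^{0} \in \tau_{\geq k+0} = \tau_{\geq k}$ for every $0 \leq k \leq n|G|+\ell$, which is exactly the stated corollary. For $k$ strictly negative the relevant skeleton is the basepoint, so the conclusion is automatic there.

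The only hypothesis that requires work is the global bound $S^{n\rho+\ell} \in \tau_{\geq n|G|+\ell}$. When $\ell = -1$, this is immediate from the definition of the slice filtration: $S^{n\rho-1}$ is exactly one of the defining generators of $\tau_{\geq n|G|-1}$ (the case $H = G$, coefficient of $\rho$ equal to $n$, and $\epsilon = 1$). When $\ell \geq 0$, I would start from the observation that $S^{n\rho}$ is a generator of $\tau_{\geq n|G|}$ (the $\epsilon = 0$ case) and apply Lemma~\ref{lem:Suspensions} exactly $\ell$ times, each application raising slice-connectivity by one, to conclude $S^{n\rho+\ell} = \Sigma^{\ell}S^{n\rho} \in \tau_{\geq n|G|+\ell}$.

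No real obstacle arises: the argument is a direct bookkeeping application of the two named results of this subsection. The mildest subtlety is remembering to invoke the $\epsilon = 1$ clause of the generating set to handle $\ell = -1$, rather than trying to desuspend from the $\ell = 0$ case -- the warning earlier in the paper emphasizes that $\tau_{\geq n}$ is precisely \emph{not} closed under desuspension, so one cannot obtain the $\ell = -1$ statement by desuspending Lemma~\ref{lem:Suspensions}.
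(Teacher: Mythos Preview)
Your argument is correct and is essentially the paper's own proof. The only cosmetic difference is in establishing $S^{n\rho+\ell}\in\tau_{\geq n|G|+\ell}$: the paper invokes $\rho$-suspension invariance applied to $S^{\ell}\in\tau_{\geq\ell}$ in one stroke, whereas you split into the generator cases $\epsilon=0,1$ and then iterate Lemma~\ref{lem:Suspensions}; both routes are equivalent, and you then feed the result into Theorem~\ref{thm:DimCW} with $Y=S^{0}$ exactly as the paper does.
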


\subsection{Slice-connectivity of representation spheres}
In equivariant homotopy theory, there is a tension between $G$ CW-complexes and representation spheres: certain statements are easier to prove for one or the other. In this case, we have a quite nice dual to the previous theorem but for representation spheres.

\begin{thm}\label{thm:DimRep}
Let $Y$ be in $\tau_{\geq m}$, and let $W$ be a representation such that $S^W\wedge Y$ is in $\tau_{\geq (\dim W+m)}$. Let $V$ is a subrepresentation of $W$ such that $V^G=W^G$. Then 
\[
\sdim(S^V\wedge Y)\geq\min\Big(\big\{\sdim(i_H^\ast S^V\wedge Y) | (W/V)^H\neq \{0\}\big\}\cup \big\{\dim W+m\big\}\Big).
\]

If the minimum is achieved by one of the restrictions, then 
\[
\sdim(S^V\wedge Y)=\min\Big(\big\{\sdim(i_H^\ast S^V\wedge Y) | (W/V)^H\neq \{0\}\big\}\Big).
\]
\end{thm}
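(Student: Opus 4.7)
The plan is to compare $S^V\wedge Y$ with $S^W\wedge Y$ via the classical pointed $G$-space cofiber sequence
\[
S(W-V)_+\to S^0\to S^{W-V},
\]
where $W-V$ denotes the orthogonal complement of $V$ in $W$. Smashing with $S^V\wedge Y$ and using $S^V\wedge S^{W-V}\simeq S^W$ produces the cofiber sequence
\[
S(W-V)_+\wedge S^V\wedge Y\longrightarrow S^V\wedge Y\longrightarrow S^W\wedge Y.
\]
Because $\tau_{\geq n}$ is closed under extensions, $\sdim(S^V\wedge Y)$ is at least the minimum of the slice-connectivities of the two outer terms. The hypothesis places the right-hand term in $\tau_{\geq \dim W+m}$, so everything reduces to bounding the slice-connectivity of the left-hand term from below by $\min_H \sdim(i_H^\ast S^V\wedge Y)$ over the prescribed subgroups $H$.

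To do this I would analyze the $G$-CW structure on $S(W-V)$. The assumption $V^G=W^G$ forces $(W-V)^G=0$, so the isotropy groups appearing on $S(W-V)$ are exactly the proper subgroups $H$ with $(W-V)^H\neq 0$, equivalently $(W/V)^H\neq\{0\}$. Consequently $S(W-V)_+\wedge S^V\wedge Y$ is assembled by attaching cells of the form $G_+\wedge_H S^k\wedge i_H^\ast(S^V\wedge Y)$ with $k\geq 0$ and with $H$ ranging over the permissible subgroups. Iterated application of Lemma~\ref{lem:Suspensions} upgrades each $H$-spectrum $S^k\wedge i_H^\ast(S^V\wedge Y)$ into $\tau_{\geq \sdim(i_H^\ast S^V\wedge Y)+k}^H$, and the earlier proposition shows that the ensuing induction lands in $\tau_{\geq \sdim(i_H^\ast S^V\wedge Y)+k}^G$. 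Since $k\geq 0$, closure of $\tau_{\geq n}$ under wedges, cofibers, and directed colimits places the left-hand term in $\tau_{\geq \min_H\sdim(i_H^\ast S^V\wedge Y)}$, which combines with the bound on the right-hand term to yield the first assertion.

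For the equality statement, I would use that $i_H^\ast$ carries $\tau_{\geq n}^G$ into $\tau_{\geq n}^H$, which gives the reverse inequality $\sdim(S^V\wedge Y)\leq \sdim(i_H^\ast S^V\wedge Y)$ for every subgroup $H$. If the overall minimum is realized by some restriction, this upper bound matches the lower bound just proved and forces equality. The main potential obstacle is the $G$-CW analysis of $S(W-V)$: verifying that its isotropy groups are exactly the $H$ with $(W/V)^H\neq\{0\}$ and then carefully propagating the cell dimensions through Lemma~\ref{lem:Suspensions} so that the nonnegative shift $+k$ never degrades the bound. Once that bookkeeping is cleanly set up, the rest is a direct application of the closure properties of the localizing subcategories recorded in \S\ref{sec:Basics}.
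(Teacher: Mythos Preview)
Your argument is essentially the paper's own: both use that $U=W-V$ has $U^G=0$, build $S^W\wedge Y$ from $S^V\wedge Y$ by attaching cells induced from subgroups $H$ with $U^H\neq 0$, invoke Lemma~\ref{lem:Suspensions} on the attaching maps, and then use $\sdim(X)\leq\sdim(i_H^\ast X)$ for the equality clause. The only cosmetic difference is that the paper phrases the middle step as a downward induction on the skeleta of $S^U$ (parallel to Theorem~\ref{thm:DimCW}) rather than your single cofiber sequence $S(U)_+\wedge S^V\wedge Y\to S^V\wedge Y\to S^W\wedge Y$; one minor remark is that the isotropy groups appearing in a $G$-CW decomposition of $S(U)$ need only be \emph{contained in} the set $\{H:(W/V)^H\neq 0\}$, not exactly equal to it, but that inclusion is all the lower bound requires.
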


\begin{proof}
Let $U$ denote the orthogonal complement of $V$ in $W$. Since $V^G=W^G$, we know $U^G=\{0\}$. Pick a $G$-CW decomposition of $S(U)$. Since $U^G=\{0\}$, we know that there are no fixed cells. The representation sphere $S^{U}$ is the unreduced suspension $S^0\ast S(U)$, and so it therefore has a single trivial cell: the zero cell. Our direct sum decomposition gives rise to a homeomorphism $S^V\wedge Y\wedge S^{U}\cong S^W\wedge Y$. 

Our analysis of the skeleta of $S^{U}$ shows that we get $S^W\wedge Y$ from $S^V\wedge Y$ by attaching ``cells'' of the form $G_{+}\wedge_{H}e^{r+1}\wedge S^{V}\wedge Y$ with $H$ a proper subgroup. The attaching map for this is from $G_+\wedge_H S^r\wedge S^V\wedge Y$, and by Lemma~\ref{lem:Suspensions}, the slice-connectivity of this is greater than or equal to that of $i_H^\ast (S^V\wedge Y)$. Thus by downward induction, $S^V\wedge Y$ can be nested between things of slice-connectivity greater than or equal to the minimum of these and the starting value of $\dim W+m$.

Since for all spectra $X$ and for all subgroups $H$ we have an inequality
\[
\sdim(X)\leq \sdim(i_H^\ast X),
\]
if the minimum for $\sdim(S^V\wedge Y)$ is achieved on some subgroup (rather than $\dim W+m$), then we immediately have equality.
\end{proof}

The inclusion of the term $\dim W+m$ is necessary to cover the cases in which the restriction of $Y$ to any proper subgroup of $G$ is contractible. In this case, $S^V\wedge Y=S^W\wedge Y$, and so they necessarily have the same slice-connectivity. Spectra of this form will be studied more in Section~\ref{sec:Geometric}.

If we add in more conditions, then we can get sharper bounds on the slice-connectivity. The following proposition is identical in proof to the previous theorem. In this, we explicitly assume tighter control of the slice-connectivitys of the domains of the attaching maps for the cells.

\begin{prop}
Let $Y$ be in $\tau_{\geq m}$ and let $V\subset W$ be representations such that 
\begin{enumerate}
\item $V^G=W^G$,
\item $S^W\wedge Y$ is in $\tau_{\geq \dim W+m}$, and
\item  for all $H\subsetneq G$ and $r\in \N$ such that $S^{W/V}$ has a cell of the form $G_{+}\wedge_{H} e^{r+1}$, the restriction $i_H^\ast(S^{V+r}\wedge Y)$ is in $\tau_{\geq \dim V+r+m}$, 
\end{enumerate}
then $S^V\wedge Y$ is in $\tau_{\geq \dim V+m}$.
\end{prop}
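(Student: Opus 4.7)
This is a sharpened version of Theorem~\ref{thm:DimRep}, and the plan is to run essentially the same cell-by-cell argument but now using hypothesis (3) to pin down the slice-connectivity of every attaching map, rather than settling for a minimum over restrictions. First, decompose $W = V \oplus U$ with $U$ the orthogonal complement; condition (1) forces $U^G = \{0\}$. Choose a $G$-CW decomposition of $S(U)$: since $U$ has no nonzero fixed vectors, every cell has isotropy $H \subsetneq G$. Coning these cells and collapsing $S(U)$ realizes $S^U = S^{W/V}$ as a $G$-CW complex with one $0$-cell at infinity together with cells of the form $G_+ \wedge_H e^{r+1}$ with $H \subsetneq G$ and $r \geq 0$ --- precisely the cells named in hypothesis (3).

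Smashing this cell structure with $S^V \wedge Y$ and using $S^V \wedge S^U \cong S^W$ displays $S^W \wedge Y$ as the top of a tower of cofibrations
$$
S^V \wedge Y = X_0 \to X_1 \to \cdots \to X_N = S^W \wedge Y,
$$
in which each stage sits in a cofiber sequence $A_{k+1} \to X_k \to X_{k+1}$ with $A_{k+1} = G_+ \wedge_H S^r \wedge S^V \wedge Y$ for some $H \subsetneq G$ and $r \geq 0$. By the projection formula, $A_{k+1} \simeq G_+ \wedge_H i_H^*(S^{V+r} \wedge Y)$, and hypothesis (3) puts the factor $i_H^*(S^{V+r} \wedge Y)$ in $\tau_{\geq \dim V + r + m}$. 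Since induction preserves slice-connectivity, it follows that $A_{k+1} \in \tau_{\geq \dim V + r + m} \subseteq \tau_{\geq \dim V + m}$, where the last inclusion only uses $r \geq 0$.

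The proof then concludes by a downward induction along the tower, driven by closure under extensions. The top term satisfies $X_N = S^W \wedge Y \in \tau_{\geq \dim W + m} \subseteq \tau_{\geq \dim V + m}$ by hypothesis (2). Supposing inductively that $X_{k+1} \in \tau_{\geq \dim V + m}$, the cofiber sequence $A_{k+1} \to X_k \to X_{k+1}$ has both outer terms in $\tau_{\geq \dim V + m}$, so closure under extensions forces $X_k$ to lie there as well. Iterating down to $k = 0$ produces the desired $S^V \wedge Y \in \tau_{\geq \dim V + m}$. The point demanding the most care is the direction of induction: we are not invoking any closure under fibers (which is false, per the warning in Section~\ref{sec:Basics}), but rather exploiting the fact that in a cofiber sequence $A \to B \to C$, knowing both $A$ and $C$ lie in $\tau_{\geq n}$ forces $B$ to as well --- and this works equally whether one traverses the filtration forward or backward, which is what lets hypothesis (3) on the attaching maps combine with hypothesis (2) on the top term to pin down $S^V \wedge Y$.
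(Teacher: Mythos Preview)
Your proof is correct and follows essentially the same route as the paper. Indeed, the paper explicitly says this proposition is ``identical in proof to the previous theorem'' (Theorem~\ref{thm:DimRep}), and you have carried out exactly that argument: decompose $W = V \oplus U$, use $U^G=\{0\}$ to get a $G$-CW structure on $S^U$ with a single trivial $0$-cell and only induced higher cells, and then run a downward induction along the resulting tower of cofibrations using closure under extensions, with hypothesis~(3) controlling the attaching maps and hypothesis~(2) starting the induction at the top.
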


It seems like these are unduly hard restrictions, requiring us to know almost everything at the outset. In practice, the second condition is purely representation-theoretic and easy to check based on the dimensions of fixed points. Using Lemma~\ref{lem:Suspensions}, we have a very easy to check sufficient condition for Theorem~\ref{thm:DimRep}.

\begin{cor}\label{cor:weakestequality}
If $Y$ is in $\tau_{\geq m}$, if $S^{W}$ is a representation sphere such that $S^{W}\wedge Y$ is in $\tau_{\geq (\dim W+m)}$, and if $V$ is a subrepresentation of $W$ such that $V^G=W^G$ and the restriction to all proper subgroups of $S^{V}\wedge Y$ is in $\tau_{\geq (\dim V+m)}$, then $S^{V}\wedge Y$ is in $\tau_{\geq(\dim V+m)}$.
\end{cor}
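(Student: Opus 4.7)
The plan is to derive this corollary as an immediate packaging of Theorem~\ref{thm:DimRep}. That theorem already produces the lower bound
\[
\sdim(S^V\wedge Y)\geq\min\Big(\big\{\sdim(i_H^\ast S^V\wedge Y) \mid (W/V)^H\neq \{0\}\big\}\cup \big\{\dim W+m\big\}\Big),
\]
so I only need to check that every term appearing in that minimum is at least $\dim V+m$.

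For the second set in the minimum, this is immediate: since $V\subset W$, we have $\dim W\geq \dim V$, hence $\dim W+m\geq \dim V+m$. For the first set, first observe that the hypothesis $V^G=W^G$ forces $(W/V)^G=\{0\}$, so any subgroup $H$ with $(W/V)^H\neq\{0\}$ is automatically a proper subgroup of $G$. Then the assumption in the corollary—that $i_H^\ast(S^V\wedge Y)$ lies in $\tau_{\geq \dim V+m}$ for every proper $H\subsetneq G$—says precisely that $\sdim(i_H^\ast S^V\wedge Y)\geq \dim V+m$ for each $H$ contributing to the minimum. Combining these two observations, the minimum is at least $\dim V+m$, and so $S^V\wedge Y$ is in $\tau_{\geq(\dim V+m)}$, as required.

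There is no real obstacle here: the substantive work—the inductive cell-attachment argument using the $G$-CW decomposition of $S(U)$ for $U$ the orthogonal complement of $V$ in $W$, together with Lemma~\ref{lem:Suspensions}—has already been carried out in the proof of Theorem~\ref{thm:DimRep}. The only thing to articulate is the elementary verification that $V^G=W^G$ restricts the relevant isotropy groups to proper subgroups, making the corollary's hypothesis exactly what is needed to rule out restriction terms smaller than $\dim V+m$.
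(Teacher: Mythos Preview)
Your proof is correct and is essentially the approach the paper intends: the paper states the corollary as an immediate consequence of the preceding results without giving a separate proof. The only minor difference is that the paper's lead-in phrase (``Using Lemma~\ref{lem:Suspensions}'') suggests routing through the intermediate Proposition, where one must control $i_H^\ast(S^{V+r}\wedge Y)$ for the various $r$ appearing in the cell decomposition of $S^{W/V}$, and Lemma~\ref{lem:Suspensions} supplies those bounds from the single hypothesis on $i_H^\ast(S^V\wedge Y)$; your direct appeal to Theorem~\ref{thm:DimRep} bypasses that step entirely and is slightly cleaner.
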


\noindent For cyclic $p$-groups, this becomes a single condition to check: that the restriction to the maximal proper subgroup is of the desired slice-connectivity.

Choosing $Y=S^0$ in the previous theorems allows us to determine for various representation spheres the slice-connectivity.

\begin{cor}
For all finite dimensional representations $V$, there is a $K$ such that $S^{V+k}$ is in $\tau_{\geq \dim V+k}$ for all $k\geq K$.
\end{cor}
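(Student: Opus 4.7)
The plan is to induct on the order of $G$. The base case $G = \{e\}$ is immediate, since $\tau_{\geq n}$ for the trivial group is exactly the class of $(n-1)$-connected spectra and $S^{V+k}$ is an ordinary sphere of dimension $\dim V + k$. For the inductive step, first collect the data from the proper subgroups: for each $H \subsetneq G$, applying the inductive hypothesis to the restricted representation $i_H^{\ast}V$ yields an integer $K_H$ such that
\[
i_H^{\ast}S^{V+k} \;=\; S^{i_H^{\ast}V + k} \;\in\; \tau_{\geq \dim V + k}^H
\]
for all $k \geq K_H$. Since $G$ has only finitely many subgroups, set $K_0 = \max_H K_H$.

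The key step is to apply Corollary~\ref{cor:weakestequality} with $Y = S^0$ and $m = 0$, taking as ambient representation $W = a\rho_G$ with $a := \dim V^G + k$. This choice is engineered so that every hypothesis lines up simultaneously: by the definition of the generators, $S^W = S^{a\rho_G}$ lies in $\tau_{\geq a|G|} = \tau_{\geq \dim W}$ on the nose, and $\dim W^G = a = \dim(V+k)^G$, so the fixed-point condition $V^G = W^G$ is satisfied as soon as an embedding $V+k \hookrightarrow a\rho_G$ exists (such an embedding automatically pairs up the trivial isotypic components, hence the fixed subspaces). The existence of the embedding is a question about isotypic multiplicities: the trivial components agree by construction, while for each nontrivial irreducible $\sigma$ of dimension $d_\sigma$ the requirement is
\[
\mathrm{mult}_\sigma(V) \;\leq\; (\dim V^G + k)\,d_\sigma,
\]
which is linear in $k$ and hence holds for all $k$ beyond some $K_1$ depending only on $V$. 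Setting $K = \max(K_0, K_1)$, every hypothesis of Corollary~\ref{cor:weakestequality} is in place and we conclude $S^{V+k} \in \tau_{\geq \dim V + k}$ for $k \geq K$.

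The only real difficulty is choosing the ambient $W$ so that three things come for free at once: the slice-connectivity of $S^W$, the matching of fixed-point dimensions, and the actual existence of the inclusion $V+k \hookrightarrow W$. The choice $W = a\rho_G$ with $a = \dim V^G + k$ dispenses with the first two automatically from the definition of the generators, and collapses the third into the finitely many linear-in-$k$ multiplicity inequalities displayed above; everything else is routine.
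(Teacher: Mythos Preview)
Your argument is correct and follows essentially the same route as the paper: induct on $|G|$ and invoke Corollary~\ref{cor:weakestequality} with $Y=S^{0}$, using the inductive hypothesis to handle the proper subgroups. The only difference is cosmetic: the paper fixes a single ambient $W_{0}=m\rho+\bar{\rho}$ with $(V+k_{0})^{G}=W_{0}^{G}$ and then suspends by trivial copies (using Lemma~\ref{lem:Suspensions}) to cover all larger $k$, whereas you let the ambient vary with $k$ as $W=(\dim V^{G}+k)\rho_{G}$; both choices match fixed points and yield $S^{W}\in\tau_{\geq\dim W}$ directly from the generators.
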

\begin{proof}
Choose $k_0$ such that
\[
V+k_0\subset m\rho+\bar{\rho}=W_0
\]
and $(V+k_0)^G=W_0^G$. By definition, $S^{W_0}$ is in $\tau_{\geq\dim W_0}$, and by Lemma~\ref{lem:Suspensions}, for all $\ell\geq 0$, $S^{W_0+\ell}$ is in $\tau_{\geq\dim W_0+\ell}$. By induction on $G$, there is a $k_1$ such that for all $H\subsetneq G$,
\[
\sdim(i_H^\ast S^{V+k_0+k_1})\geq\dim V+k_0+k_1.
\]
By Corollary~\ref{cor:weakestequality}, we conclude that we need only take $K=k_0+k_1$.
\end{proof}

There is one beautiful family in which we can get a slick result. This was originally posed by Strickland, and it gives a clean view of a natural family of representations that have the right slice-connectivity.

\begin{thm}\label{thm:PermutationDimension}
If $X$ is a finite $G$-set and $\epsilon$ is $0$ or $1$, then $S^{\rho_{X}-\epsilon}$ is in $\tau_{\geq |X|-\epsilon}$.
\end{thm}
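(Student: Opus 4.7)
The plan is to induct on $|G|$, using Corollary~\ref{cor:weakestequality} as the main engine to descend from the known slice-connectivity of a multiple of the regular representation sphere to that of $S^{\rho_X-\epsilon}$.

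The base case $G=\{e\}$ reduces to the classical fact that $S^{n}$ is $(n-1)$-connected, which for $n=|X|-\epsilon\geq -1$ is exactly the claim. For the inductive step, assume the result for every proper subgroup. Let $k=|X/G|$ be the number of $G$-orbits, so $X=\bigsqcup_{i=1}^{k}G/H_i$ and $\rho_X=\bigoplus_i\rho_{G/H_i}$. Each $\rho_{G/H_i}$ is a quotient of $\rho_G$ via averaging over $H_i$-cosets; by semisimplicity of finite-dimensional real $G$-representations these surjections split, producing a $G$-equivariant embedding $\rho_X\hookrightarrow W:=k\rho_G$. A direct count of fixed vectors gives $\dim\rho_X^G=k=\dim W^G$ (both spaces are spanned by orbit sums).

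I would then apply Corollary~\ref{cor:weakestequality} with $V=\rho_X$, $W=k\rho_G$, $Y=S^{-\epsilon}$ and $m=-\epsilon$. The hypothesis $Y\in\tau_{\geq -\epsilon}$ is immediate: for $\epsilon=0$ we have $S^0\in\tau_{\geq 0}$, and for $\epsilon=1$ we have $S^{-1}\in\tau_{\geq -1}$ by Proposition~\ref{prop:NegativeSpheres}. By suspension invariance (Theorem~\ref{thm:suspensions}),
\[
S^{W}\wedge Y \;=\; \Sigma^{k\rho_G}S^{-\epsilon}\;\in\;\tau_{\geq k|G|-\epsilon}\;=\;\tau_{\geq\dim W+m}.
\]
For any proper $H'\subsetneq G$, the restriction $i_{H'}^{\ast}\rho_X$ is the permutation representation $\rho_{X'}$ on $X$ viewed as an $H'$-set, and crucially $|X'|=|X|$. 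The inductive hypothesis applied to $H'$ in place of $G$ therefore gives
\[
i_{H'}^{\ast}(S^{\rho_X-\epsilon}) \;=\; S^{\rho_{X'}-\epsilon}\;\in\;\tau_{\geq|X|-\epsilon}^{H'}\;=\;\tau_{\geq\dim V+m}^{H'}.
\]
Corollary~\ref{cor:weakestequality} then yields $S^{\rho_X-\epsilon}\in\tau_{\geq|X|-\epsilon}$, completing the induction.

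The only real subtlety is the choice of ambient representation: taking $W=k\rho_G$ with $k$ equal to the number of $G$-orbits of $X$ is precisely what makes $V^G=W^G$, matching the hypothesis of Theorem~\ref{thm:DimRep}. Everything else is mechanical once one observes that permutation representations restrict to permutation representations on the same underlying set, so the cardinality of $X$ is preserved when passing to proper subgroups in the induction.
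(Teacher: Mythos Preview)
Your proof is correct and follows essentially the same approach as the paper: induction on $|G|$, embedding $\rho_X$ into $k\rho_G$ with $k=|X/G|$ so that the $G$-fixed points match, and invoking the inductive hypothesis on the restriction to proper subgroups (where $\rho_X$ restricts to the permutation representation on the same underlying set). The only cosmetic difference is that you apply Corollary~\ref{cor:weakestequality} with $Y=S^{-\epsilon}$, whereas the paper phrases the argument via Theorem~\ref{thm:DimRep} with the $-\epsilon$ absorbed into the representation; your formulation is arguably cleaner since it keeps $V$ and $W$ as honest representations.
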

\begin{proof}
This will be proved by induction on the order of $G$. For $G$ the trivial group (and therefore for any representation sphere induced up from the trivial group), this is obvious since spheres are the representation spheres for the trivial group. 

Now assume that for any proper subgroup of $G$ the result is true, and we can apply Theorem~\ref{thm:DimRep}. Let $k=|X/G|$. Since the permutation representation associated to an orbit $G/H$ embeds as a subrepresentation of the regular representation, we have a natural inclusion
\[
\rho_{X}-\epsilon\subseteq k\rho_{G}-\epsilon.
\]
Moreover, this has the property that
\[
(\rho_{X}-\epsilon)^{G}=\R^{k-\epsilon}=(k\rho_{G}-\epsilon)^{G},
\]
and by definition, $S^{k\rho_{G}-\epsilon}$ is in $\tau_{\geq k|G|-\epsilon}$. Thus the first hypothesis of Theorem~\ref{thm:DimRep} is satisfied. To prove the result, we need only show that the restriction of $S^{\rho_{X}-\epsilon}$ to any proper subgroup is in $\tau_{\geq |X|-\epsilon}$. However, $\rho_{X}$ is the permutation representation $\rho_{X}$, and this restricts to the permutation representation
\[
\rho_{Y}=\rho_{\res_{H}^{G}(X)}.
\]
Our desired representation sphere therefore restricts to $S^{\rho_{Y}-\epsilon}$. By our induction hypothesis, this is greater than or equal to its dimension, and we have proved the result.
\end{proof}

\begin{remark}\label{rmk:RepDimen}
Just as with CW complexes, it is not the case that every representation sphere is greater than or equal to its dimension.  For example, $S^{2\rho-2}$ is not in $\tau_{2|G|-2}$. The $\rho$-desuspension of $S^{2\rho-2}$ is $S^{\rho-2}$ which is in $\tau_{\geq -1}$ but not in $\tau_{\geq 0}$ (since it is not $(-1)$-connected). Thus $S^{2\rho-2}$ is in $\tau_{\geq (|G|-1)}$ but not in $\tau_{\geq |G|}$.
\end{remark}

Using similar methods, we can produce the Spanier-Whitehead dual of Theorem~\ref{thm:PermutationDimension}. Rather than present it in full generality, we will give the form which we will later use.

\begin{thm}\label{thm:NegPermutation}
If $N\lhd G$, then the spectrum $S^{\rho_G-\rho_{G/N}}$ is in $\tau_{\geq |N|-1}$.
\end{thm}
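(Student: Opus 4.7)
The plan is to induct on $|G|$, with $|G|=1$ trivial.  When $N=G$, the claim reduces to $S^{\bar\rho_G} \in \tau_{\geq |G|-1}$, which is Theorem~\ref{thm:PermutationDimension} with $X=G$ and $\epsilon=1$.  So assume $N \subsetneq G$.  I will apply Theorem~\ref{thm:DimRep} with $Y = S^0$, taking $V = \rho_G - \rho_{G/N}$ viewed as an honest subrepresentation of $\rho_G$ (namely the kernel of the pushforward $\rho_G \twoheadrightarrow \rho_{G/N}$) and ambient representation $W = \bar\rho_G$.

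The hypotheses are easy to verify: $V \subset W$ because the augmentation $\rho_G \to \R$ factors through $\rho_{G/N}$; $V^G = W^G = 0$; and $S^W \in \tau_{\geq |G|-1}$ by Theorem~\ref{thm:PermutationDimension}. A short diagram chase identifies the quotient $W/V \cong \bar\rho_{G/N}$, so $(W/V)^H \neq 0$ precisely when $H$ has at least two orbits on $G/N$, i.e.\ when $HN \neq G$. Theorem~\ref{thm:DimRep} then yields
\[
\sdim(S^V) \geq \min\Bigl(\{|G|-1\} \cup \bigl\{\sdim(i_H^\ast S^V) : H \subsetneq G,\ HN \neq G\bigr\}\Bigr),
\]
and since $|G|-1 \geq |N|-1$, it suffices to bound every subgroup restriction by $|N|-1$.

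Given such an $H$, set $K = HN$, a proper subgroup of $G$ containing $N$. Because $N \lhd G$ is contained in $K$, every $K$-orbit on $G/N$ is isomorphic to $K/N$, and a direct orbit count gives $\rho_G|_K \cong [G:K]\,\rho_K$ together with $\rho_{G/N}|_K \cong [G:K]\,\rho_{K/N}$; hence $V|_K \cong [G:K](\rho_K - \rho_{K/N})$. The inductive hypothesis applied to the pair $(K,N)$, valid because $|K| < |G|$, places $S^{\rho_K - \rho_{K/N}}$ in $\tau_{\geq |N|-1}^K \subseteq \tau_{\geq 0}^K$, so iterating Proposition~\ref{prop:SmashPositive} gives
\[
S^{V|_K} \cong \bigl(S^{\rho_K - \rho_{K/N}}\bigr)^{\wedge [G:K]} \in \tau_{\geq |N|-1}^K,
\]
and restriction further from $K$ down to $H$ preserves this bound.

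The only mildly delicate step is identifying $W/V$ with $\bar\rho_{G/N}$ and extracting the restriction formula $V|_K = [G:K](\rho_K - \rho_{K/N})$ whenever $N \subseteq K \subseteq G$; once those representation-theoretic facts are in hand, Theorems~\ref{thm:PermutationDimension} and~\ref{thm:DimRep} drive the induction to completion.
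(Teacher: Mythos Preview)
Your proof is correct and follows essentially the same route as the paper: both take $V=\rho_G-\rho_{G/N}\subset W=\bar\rho_G$, invoke Theorem~\ref{thm:DimRep}, and reduce to restrictions $S^{[G:K](\rho_K-\rho_{K/N})}$ for proper $K\supseteq N$, handled inductively. Your version is somewhat more explicit---separating out $N=G$, passing from an arbitrary $H$ with $(W/V)^H\neq 0$ to $K=HN$, and invoking Proposition~\ref{prop:SmashPositive} for the $[G:K]$-fold smash power---where the paper simply asserts that the minimum over such restrictions is attained at $K=N$ and computes that case via Remark~\ref{rmk:RepDimen}; but these are differences of exposition rather than strategy.
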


\begin{proof}
We have an inclusion of representations
\[
V=\rho_G-\rho_{G/N}\subset \rho_G-1=W,
\]
and the $G$-fixed points agree. Thus we can find a lower-bound on the size of $S^{\rho_G-\rho_{G/N}}$ by apply Theorem~\ref{thm:DimRep}. The representation $W/V$ is $\rho_{G/N}-1$, and those subgroups which occur in the $G$ CW-decomposition all contain $N$. Thus we need to understand the slice-connectivity of 
\[
i_H^\ast (S^{\rho_G-\rho_{G/N}})=S^{[G:H](\rho_H-\rho_{H/N})}
\]
for $N\subset H\subset G$. By downward induction on $[G:N]$,  it is obvious that the base case of $H=N$ has the minimum size, and for this, we are considering
\[
S^{[G:N](\rho_N-1)}.
\]

By Remark~\ref{rmk:RepDimen}, this is in $\tau_{\geq |N|-1}$ but not $\tau_{\geq |N|}$.
\end{proof}

A consequence of Corollary~\ref{cor:PullBackZeroSlices} below is that this bound is sharp for $N\lhd G$. For non-normal subgroups $H$, we do not know the expected bound.

There is a somewhat depressing corollary to this: the norm and slice filtration can behave less ideally than we might have hoped. If $E$ is a connective genuine equivariant commutative ring spectrum, then so is $P^k(E)$ for all $k\geq 0$. In particular, we have norm maps
\[
N_H^G i_H^\ast P^k(E)\to P^k(E)
\]
for all subgroups $H$ and for all $k\geq 0$.

For spectra like those that occur in the solution to the Kervaire invariant one problem, computation evidence suggests that for some $E$, there is a natural lift (in commutative rings)
\[
\xymatrix{
{} & {P^{k[G:H]}E}\ar[d] \\
{N_H^G i_H^\ast P^k E}\ar[ur]^{N}\ar[r]_{N} & {P^kE}}.
\]
For very stupid reasons, this holds for all commutative $E$ for $k=0$, but our result above shows that this cannot hold for a general $E$ and $k$. The norm of $S^{\rho_N-1}$ is $S^{\rho_G-\rho_{G/N}}$, and we saw that this is only in $\tau_{\geq |N|-1}$. So if we had to kill maps from $S^{\rho_N-1}$ into $E$, then the norm of those maps might not be killed at the desired lift.

\section{Pullbacks and slices thereof}\label{sec:Pullback}
In this section, we will describe an interesting and useful family of slices: pullbacks. In the subsequence sections, we will uses these to identify the slice towers of $H\mM$ and for an algebraically defined class of spectra. We begin with a quite general definition and some elementary properties.

\subsection{Properties of pullbacks}
Let $G$ be an arbitrary finite group, and let $N\lhd G$. Then the functor $X\mapsto X^N$ defines a functor from finite $G$-sets to finite $G/N$-sets, and this functor preserves pullback diagrams. Thus given a Mackey functor $\mM$ on $G/N$, we can compose with the $N$-fixed point functor to get a Mackey functor $\phi_N^\ast\mM$ on $G$. For the orbits, this has a particularly easy description:
\[
\phi_N^\ast\mM(G/H)=\begin{cases}
\mM\big((G/N)/(H/N)\big) & N\subseteq H \\
0 & \text{otherwise.}
\end{cases}
\]
These functors were studied extensively in \cite{GreMay92}.

This algebraic story is part of a much richer narrative in algebraic topology which connects nicely to the geometric fixed points functor and to the slice filtration. Since $N$ is a normal subgroup, the collection 
\[
\mathcal F[N]=\{H\subseteq G | N\not\subseteq H\}
\] 
is closed under subgroups and conjugation. There is, therefore, a universal space $E\mathcal F[N]$ for this family which is built out of cells induced up from members of $\mathcal F[N]$. The above description of the value of the Mackey functor $\phi_N^\ast\mM$ shows that the function spectrum $F(E\mathcal F[N]_+,H\phi_N^\ast\mM)$ is equivariantly contractible, and that we have a natural equivalence
\[
H\phi_{N}^{\ast}\mM\to\tilde{E}\mathcal F[N]\wedge H\phi_{N}^{\ast}\mM.
\]
We use this to motivate a notion of ``pullback'' for a general $G/N$-spectrum $X$.
\begin{defn}
If $X$ is a $G/N$-spectrum, then let $\phi_{N}^{\ast}X$ denote the $G$-spectrum 
\[
\tilde{E}\mathcal F[N]\wedge X,
\] 
where here $X$ is viewed as a $G$ spectrum via the natural quotient map.
\end{defn}
Here we pause. The category of genuine $G$-spectra is a localization of $G$-diagrams in spectra. The localization amounts to ``invert representation spheres'', and this is clearly preserved in passing from $G/N$-diagrams to $G$-diagrams. Smashing with $\tilde{E}\mathcal F[N]$ removes the additional homotopy groups this pullback might produce. Moreover, we have good control over the homotopy groups. Much of this material is also discussed in Lewis-May-Steinberger {\cite[II. \S 9]{LMayS}}, but we briefly sketch the relevant facts.

With this definition, the following is immediate.
\begin{prop}\label{prop:PullBackMackey}
If $\mM$ is a Mackey functor on $G/N$, then 
\[
\phi_{N}^{\ast}H\mM=H\phi_{N}^{\ast}\mM.
\]
\end{prop}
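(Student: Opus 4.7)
My plan is to compute $\underline{\pi_\ast}(\phi_N^\ast H\mM)$ directly, show it is concentrated in degree zero, and identify it with $\phi_N^\ast\mM$; the characterization of Eilenberg--Mac Lane spectra by their homotopy Mackey functor will then yield the equivalence $\phi_N^\ast H\mM\simeq H\phi_N^\ast\mM$.

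The calculation splits on whether a subgroup $K\subseteq G$ contains $N$. If $N\not\subseteq K$, then the family $\mathcal F[N]\cap\mathrm{Sub}(K)$ is all of $\mathrm{Sub}(K)$, so $i_K^\ast E\mathcal F[N]$ is $K$-contractible, $i_K^\ast\tilde E\mathcal F[N]\simeq\ast$, and hence $\pi_n^K(\phi_N^\ast H\mM)=0$ for all $n$, matching $\phi_N^\ast\mM(G/K)=0$. If $N\subseteq K$, I would combine two standard ingredients: the staged-fixed-point identification
\[
(\tilde E\mathcal F[N]\wedge Y)^K\simeq \Phi^N(Y)^{K/N}
\]
valid for any $G$-spectrum $Y$, together with the fact that geometric $N$-fixed points retract inflation, $\Phi^N(\mathrm{infl}_{G/N}^G Z)\simeq Z$ for any $G/N$-spectrum $Z$. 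Applied to $Y=\mathrm{infl}(H\mM)$, these give $\pi_n^K(\phi_N^\ast H\mM)\cong\pi_n^{K/N}(H\mM)$, which is $\mM\big((G/N)/(K/N)\big)$ in degree zero and vanishes in other degrees, again matching $\phi_N^\ast\mM(G/K)$.

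Finally, naturality of the above isomorphisms under restriction and transfer maps identifies the Mackey-functor structure on $\underline{\pi_0}(\phi_N^\ast H\mM)$ with that of $\phi_N^\ast\mM$, the latter being by definition the composition of $\mM$ with the $N$-fixed point functor on $G$-sets. The main obstacle is bookkeeping rather than conceptual: both inputs above are standard and are essentially the content of \cite[II.\S 9]{LMayS} cited just before the proposition; the rest is a direct check once the fixed-point and inflation conventions are pinned down.
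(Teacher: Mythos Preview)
Your argument is correct and is essentially a fleshed-out version of what the paper leaves implicit. The paper does not give a separate proof; it simply declares the statement ``immediate'' after the definition, having just observed (in the paragraph preceding the definition) that the description of $\phi_N^\ast\mM$ forces $F(E\mathcal F[N]_+,H\phi_N^\ast\mM)\simeq\ast$, so that $H\phi_N^\ast\mM$ is already $\tilde{E}\mathcal F[N]$-local. Your computation of $\underline{\pi_\ast}(\phi_N^\ast H\mM)$ via the case split on $N\subseteq K$ versus $N\not\subseteq K$, together with the identities $(\tilde{E}\mathcal F[N]\wedge Y)^K\simeq\Phi^N(Y)^{K/N}$ and $\Phi^N\circ\mathrm{infl}\simeq\mathrm{id}$, is exactly the content of \cite[II.\S9]{LMayS} that the paper is invoking by reference. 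So the two approaches coincide; yours simply makes explicit the bookkeeping the paper delegates to that citation.
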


\begin{prop}\label{prop:FixedPoints}
The ``$N$-fixed points'' functor establishes an equivalence between $G/N$-spectra and the image of $\phi_{N}^{\ast}$.
\end{prop}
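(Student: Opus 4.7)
The plan is to show that the $N$-fixed point functor $(-)^N$ is inverse to $\phi_N^\ast$ on the essential image. Let $p\colon G\to G/N$ denote the quotient and $p^\ast$ the inflation of a $G/N$-spectrum to a $G$-spectrum (via $G$-diagrams, as indicated in the paragraph preceding the proposition). By definition $\phi_N^\ast X=\tilde{E}\mathcal F[N]\wedge p^\ast X$, and the standard identification (Lewis-May-Steinberger \cite[II.9]{LMayS}) of $(\tilde{E}\mathcal F[N]\wedge-)^N$ with the geometric fixed-point functor $\Phi^N$ gives
\[
(\phi_N^\ast X)^N \simeq \Phi^N(p^\ast X).
\]

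The first and key step I would carry out is to verify $\Phi^N\circ p^\ast\simeq \mathrm{id}$ on $G/N$-spectra. Since both functors are exact and commute with arbitrary wedges and directed colimits, it suffices to check this on a set of compact generators. For a $G/N$-representation $V$, one has $\Phi^N(p^\ast S^V)\simeq S^V$ by the usual fixed-point computation $(S^{p^\ast V})^N=S^V$. For the Eilenberg-Mac Lane generators, the explicit description of $\phi_N^\ast\mM$ immediately preceding Proposition~\ref{prop:PullBackMackey}, combined with Proposition~\ref{prop:PullBackMackey} itself, identifies the homotopy Mackey functor of $\tilde{E}\mathcal F[N]\wedge p^\ast H\mM$ on orbits $G/H$ with $N\subseteq H$ with $\mM$ evaluated on $(G/N)/(H/N)$, and sends it to zero otherwise; taking $N$-fixed points thus recovers $\mM$. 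Extending by colimits and cofiber sequences finishes this step.

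The second step is essentially formal: any $Y$ in the image of $\phi_N^\ast$ has the form $\phi_N^\ast X$ for some $G/N$-spectrum $X$, and the first step gives $Y^N\simeq X$, so $\phi_N^\ast(Y^N)\simeq \phi_N^\ast X\simeq Y$. Together with naturality of the equivalence in step one, this exhibits $(-)^N$ and $\phi_N^\ast$ as mutually inverse equivalences between $G/N$-spectra and the essential image of $\phi_N^\ast$.

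The main obstacle is really the bookkeeping in step one: the identification of categorical $N$-fixed points with $\Phi^N$ on $\tilde{E}\mathcal F[N]$-local spectra, and the verification that $\Phi^N$ inverts inflation. Both are classical facts from equivariant stable homotopy theory (the former is essentially the definition of $\Phi^N$, the latter a check on generating spheres and orbits), so the proof amounts to citing \cite{LMayS} and propagating the generator-wise identities across the localizing subcategory via closure under cofiber sequences and colimits.
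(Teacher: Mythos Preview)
Your proposal is correct. The paper does not give a self-contained proof of this proposition: it simply observes that smashing with $\tilde{E}\mathcal F[N]$ is a Bousfield localization whose local objects form the image of $\phi_N^\ast$, and then cites Lewis--May--Steinberger \cite[II \S9]{LMayS} for the equivalence with $G/N$-spectra. Your argument is an explicit unpacking of that citation, verifying $\Phi^N\circ p^\ast\simeq\mathrm{id}$ on generators and then deducing the inverse equivalence formally.

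One small remark on presentation: your list of ``generators'' is a bit heterogeneous. Representation spheres $S^V$ for $G/N$-representations $V$ do not by themselves generate the $G/N$-stable category; the standard compact generators are the induced spheres $(G/N)/K_+\wedge S^n$. Your computation goes through equally well there: $p^\ast\big((G/N)/K_+\big)=G/\tilde K_+$ with $N\subseteq\tilde K$, and $\Phi^N(G/\tilde K_+)=(G/N)/K_+$. The Eilenberg--Mac Lane check you give is fine as an alternative route but is not strictly needed once the generator check is phrased this way. With that adjustment, your two-step argument is exactly the content of the LMS chapter the paper invokes.
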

The image is actually relatively easy to describe: smashing with $\tilde{E}\mathcal F[N]$ is a localization, and the image of $\phi_{N}^{\ast}$ is the subcategory of local objects. This is the heart of Lewis-May-Steinberger's chapter ``the construction of $G/N$-spectra from $G$-spectra'' \cite[II \S9]{LMayS}. The next proposition is quite useful and is a generalization of Greenlees-May's analogous result for pullbacks of {\EM} spectra {\cite[Proposition 10]{GreMay92}}.

\begin{prop}\label{prop:PullBackGeomFP}
For any $X$, we have a natural isomorphism
\[
\big[X,\phi_{N}^{\ast}Y\big]_G \cong \Big[\big(\tilde{E}\mathcal F[N]\wedge X\big)^N,Y\Big]_{G/N}.
\]
\end{prop}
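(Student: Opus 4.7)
The plan is to identify both sides as maps in the appropriate local subcategory and chase the adjunction through Proposition~\ref{prop:FixedPoints}. The key algebraic input is that smashing with $\tilde{E}\mathcal F[N]$ is a Bousfield localization: it is idempotent, with $\tilde{E}\mathcal F[N] \wedge \tilde{E}\mathcal F[N] \simeq \tilde{E}\mathcal F[N]$, and the target $\phi_N^\ast Y = \tilde{E}\mathcal F[N] \wedge Y$ is local by construction. In particular, precomposition with the localization map $X \to \tilde{E}\mathcal F[N] \wedge X$ induces a bijection
\[
\big[\tilde{E}\mathcal F[N]\wedge X,\ \tilde{E}\mathcal F[N]\wedge Y\big]_G \xrightarrow{\ \cong\ } \big[X,\ \tilde{E}\mathcal F[N]\wedge Y\big]_G = \big[X,\phi_N^\ast Y\big]_G.
\]

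Next, I would invoke Proposition~\ref{prop:FixedPoints}: since both $\tilde{E}\mathcal F[N]\wedge X$ and $\tilde{E}\mathcal F[N]\wedge Y$ lie in the image of $\phi_N^\ast$, the $N$-fixed point functor gives an equivalence of hom-sets
\[
\big[\tilde{E}\mathcal F[N]\wedge X,\ \tilde{E}\mathcal F[N]\wedge Y\big]_G \cong \Big[\big(\tilde{E}\mathcal F[N]\wedge X\big)^N,\ \big(\tilde{E}\mathcal F[N]\wedge Y\big)^N\Big]_{G/N}.
\]
Combining these two steps already produces the left-hand side in the claimed form, provided we identify the $N$-fixed points on the target.

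For the final identification, I would use that $Y$ is inflated from $G/N$, so $Y$ itself is a local object in the sense above: the natural map $Y \to \tilde{E}\mathcal F[N]\wedge Y$ becomes an equivalence after taking $N$-fixed points, and $Y^N \simeq Y$ as a $G/N$-spectrum (which is precisely the statement that $\phi_N^\ast$ followed by $N$-fixed points is the identity on $G/N$-spectra, by Proposition~\ref{prop:FixedPoints}). Hence $(\tilde{E}\mathcal F[N]\wedge Y)^N \simeq Y$, yielding the right-hand side of the proposition.

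The main obstacle is being careful about what $N$-fixed points means here: one must distinguish categorical from geometric fixed points, and ensure that the equivalence from Proposition~\ref{prop:FixedPoints} is being applied to the correct variant. Once one commits to the convention that $(-)^N$ on local objects means the inverse to $\phi_N^\ast$ (which on the point-set level is the geometric fixed-point construction $\Phi^N$), the remaining steps are formal manipulations of the localization. Everything else reduces to naturality and the two-out-of-three property for equivalences.
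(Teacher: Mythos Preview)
Your proof is correct and follows essentially the same three-step argument as the paper: use that smashing with $\tilde{E}\mathcal F[N]$ is a localization to replace $X$ by $\tilde{E}\mathcal F[N]\wedge X$, apply the equivalence of Proposition~\ref{prop:FixedPoints} to pass to $G/N$-spectra via $(-)^N$, and then identify $(\phi_N^\ast Y)^N\simeq Y$. Your added remark about distinguishing the relevant fixed-point functors is a reasonable caveat but does not alter the argument.
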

\begin{proof}
Since smashing with $\tilde{E}\mathcal F[N]$ is a localization, we have a natural isomorphism
\[
\big[X,\phi_{N}^{\ast}Y\big]_{G}\cong \big[(\tilde{E}\mathcal F[N]\wedge X),\phi_{N}^{\ast} Y\big]_{G}.
\]
By the equivalence of  homotopy categories described in Proposition~\ref{prop:FixedPoints}, this final group is isomorphic to
\[
\Big[(\tilde{E}\mathcal F[N]\wedge X)^{N},(\phi_{N}^{\ast}Y)^{N}\Big]_{G/N},
\]
since ``$N$-fixed points'' is a left homotopy inverse to $\phi_{N}^{\ast}$, we have the desired result.
\end{proof}
Thus the pullback in equivariant spectra is the right adjoint to the $N$-geometric fixed points functor. This will be essential in our analysis of the role of $\phi_N^{\ast}$ in the slice story.

Perhaps more interesting is a final proposition which links the monoidal structure to pulling back. This follows from basic properties of $\tilde{E}\mathcal F[N]$. This proposition is a kind of Frobenius reciprocity in spectra: the role of restriction is played by $N$-geometric fixed points and the role of transfer is played by pullback.
\begin{prop}\label{prop:PullBackTransfer}
If $X$ is a $G$-spectrum and $Y$ is a $G/N$-spectrum, then we have a natural equivalence
\[
X\wedge\phi_{N}^{\ast}Y\simeq\phi_{N}^{\ast}\big((\tilde{E}\mathcal F[N]\wedge X)^{N}\wedge Y\big).
\]
\end{prop}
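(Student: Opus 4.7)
The plan is to unwind both sides into smash products with $\tilde{E}\mathcal F[N]$ and exploit the fact that $\tilde{E}\mathcal F[N]$ is smash-idempotent, together with the fact (encoded in Proposition~\ref{prop:FixedPoints}) that any $\tilde{E}\mathcal F[N]$-local spectrum is recovered by pulling back its $N$-fixed points.

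First I would rewrite the left-hand side using the definition $\phi_N^\ast Y = \tilde{E}\mathcal F[N]\wedge Y$, so that
\[
X\wedge\phi_N^\ast Y \simeq X\wedge\tilde{E}\mathcal F[N]\wedge Y.
\]
Because smashing with $\tilde{E}\mathcal F[N]$ is a localization, the spectrum $\tilde{E}\mathcal F[N]\wedge X$ is in the essential image of $\phi_N^\ast$, and Proposition~\ref{prop:FixedPoints} identifies it with $\phi_N^\ast\bigl((\tilde{E}\mathcal F[N]\wedge X)^N\bigr)$, i.e.\ with $\tilde{E}\mathcal F[N]\wedge (\tilde{E}\mathcal F[N]\wedge X)^N$ where the $N$-fixed point spectrum is regarded as a $G$-spectrum via inflation along $G\to G/N$. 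Substituting this in and using associativity of smash yields
\[
X\wedge\tilde{E}\mathcal F[N]\wedge Y
\simeq \tilde{E}\mathcal F[N]\wedge\tilde{E}\mathcal F[N]\wedge(\tilde{E}\mathcal F[N]\wedge X)^N\wedge Y.
\]

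Next I would apply the idempotence $\tilde{E}\mathcal F[N]\wedge\tilde{E}\mathcal F[N]\simeq\tilde{E}\mathcal F[N]$ (which follows from the fact that $\tilde{E}\mathcal F[N]$ represents a smashing localization, or equivalently from collapsing the cofiber sequence $E\mathcal F[N]_+\to S^0\to \tilde{E}\mathcal F[N]$ after smashing with itself). This collapses the display to
\[
\tilde{E}\mathcal F[N]\wedge\bigl((\tilde{E}\mathcal F[N]\wedge X)^N\wedge Y\bigr).
\]
Here both factors $(\tilde{E}\mathcal F[N]\wedge X)^N$ and $Y$ live as $G/N$-spectra, so their smash is a $G/N$-spectrum, pulled back to $G$ in the obvious way; hence the right-hand side is by definition $\phi_N^\ast\bigl((\tilde{E}\mathcal F[N]\wedge X)^N\wedge Y\bigr)$, which is exactly what we wanted.

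The main point needing care is the second step: invoking Proposition~\ref{prop:FixedPoints} to replace $\tilde{E}\mathcal F[N]\wedge X$ by the pullback of its $N$-fixed points. One needs to know that the unit $\tilde{E}\mathcal F[N]\wedge X\to \phi_N^\ast\Phi^N(\tilde{E}\mathcal F[N]\wedge X)$ is an equivalence, which is precisely the content of the equivalence of categories in that proposition applied to a local object. After that, everything else is formal manipulation of smash products together with the idempotence of $\tilde{E}\mathcal F[N]$; no further calculation is required.
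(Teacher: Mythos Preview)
Your argument is correct and is essentially the same as the paper's: both use the idempotence of $\tilde{E}\mathcal F[N]$, invoke Proposition~\ref{prop:FixedPoints} to identify the local object $\tilde{E}\mathcal F[N]\wedge X$ with $\phi_N^\ast\bigl((\tilde{E}\mathcal F[N]\wedge X)^N\bigr)$, and then read off the definition of $\phi_N^\ast$ on the smash product. The only difference is cosmetic ordering---the paper applies idempotence first and then cites that $\phi_N^\ast$ is weakly monoidal, whereas you unwind that monoidality explicitly; your displayed equation carries one redundant copy of $\tilde{E}\mathcal F[N]$, but idempotence immediately absorbs it, so no harm is done.
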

\begin{proof}
The space $\tilde{E}\mathcal F[N]$ is a homotopy idempotent under smash product, so this shows us that
\[
X\wedge(\phi_{N}^{\ast}(Y))\simeq (\tilde{E}\mathcal F[N]\wedge X)\wedge(\phi_{N}^{\ast}(Y)).
\]
Now we apply Proposition~\ref{prop:FixedPoints} to deduce an equivalence
\[
\phi_{N}^{\ast}\big((\tilde{E}\mathcal F[N]\wedge X)^{N}\big)\simeq\tilde{E}\mathcal F[N]\wedge X,
\]
since $\tilde{E}\mathcal F[N]\wedge X$ is local. Pulling back is obviously weakly monoidal, so
\[
X\wedge \phi_{N}^{\ast}(Y)\simeq \phi_{N}^{\ast}\big((\tilde{E}\mathcal F[N]\wedge X)^{N}\big)\wedge\phi_{N}^{\ast}(Y)\simeq \phi_{N}^{\ast}\big((\tilde{E}\mathcal F[N]\wedge X)^{N}\wedge Y\big).\qedhere
\]
\end{proof}

\begin{cor}\label{cor:SuspensionsPullback}
If $X$ is a $G$-space, then
\[
X\wedge \phi_{N}^{\ast}Y\simeq \phi_{N}^{\ast}(X^{N}\wedge Y),
\]
so in particular, if $V$ is a representation of $G$, then 
\[
S^{V}\wedge\phi_{N}^{\ast} Y\simeq \phi_{N}^{\ast}\big(S^{V^{N}}\wedge Y\big)
\]
for any $G/N$-spectrum $Y$.
\end{cor}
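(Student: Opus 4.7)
The plan is to apply Proposition~\ref{prop:PullBackTransfer} directly; the only real content is to identify the geometric $N$-fixed points $(\tilde{E}\mathcal P[N]\wedge X)^N$ with the $G/N$-space $X^N$ when $X$ is already a $G$-space rather than a general $G$-spectrum. (Here I use $\mathcal P[N]$ loosely for $\mathcal F[N]$.) Once that identification is established, substituting into
\[
X\wedge\phi_{N}^{\ast}Y\simeq\phi_{N}^{\ast}\bigl((\tilde{E}\mathcal F[N]\wedge X)^{N}\wedge Y\bigr)
\]
gives the first equivalence, and specializing to $X=S^V$ with $(S^V)^N=S^{V^N}$ gives the representation-sphere statement.

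For the key identification, I would argue using the defining properties of $\tilde{E}\mathcal F[N]$: its $H$-fixed points are $S^0$ when $N\subseteq H$ and are contractible when $N\not\subseteq H$. For a $G$-space $X$, fixed points commute with smash product, so for any subgroup $H\supseteq N$ one computes $(\tilde{E}\mathcal F[N]\wedge X)^H \simeq X^H$, and for $H$ not containing $N$ the smash product is equivariantly contractible on the $H$-fixed points. Viewing $X^N$ as a $G/N$-space, its $(H/N)$-fixed points are exactly $X^H$ for $N\subseteq H\subseteq G$, so the two $G/N$-spaces have identified fixed-point systems and are weakly equivalent. Equivalently, this is the standard fact that the $N$-geometric fixed points of a suspension $G$-spectrum $\Sigma^\infty X$ are the suspension spectrum of $X^N$, which is recorded in \cite[II \S9]{LMayS}.

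With this identification in hand, Proposition~\ref{prop:PullBackTransfer} immediately yields
\[
X\wedge\phi_{N}^{\ast}Y\simeq\phi_{N}^{\ast}\bigl(X^N\wedge Y\bigr).
\]
For the second assertion, apply this to $X=S^V$; since $S^V$ is a representation sphere, $(S^V)^N=S^{V^N}$ as a $G/N$-space, giving $S^V\wedge\phi_N^\ast Y\simeq \phi_N^\ast(S^{V^N}\wedge Y)$.

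The only subtle point is the fixed-point identification $(\tilde{E}\mathcal F[N]\wedge X)^N\simeq X^N$, and this is the place where one must actually invoke something beyond formal manipulation — namely that geometric fixed points of a suspension spectrum recover the space-level fixed points. Given how much of the preceding section already presupposes this machinery (it is essentially the content of Proposition~\ref{prop:FixedPoints} applied to suspension spectra), the corollary is a short formal consequence rather than a genuinely new computation.
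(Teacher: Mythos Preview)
Your proposal is correct and is exactly the intended argument: the paper offers no separate proof of this corollary, treating it as immediate from Proposition~\ref{prop:PullBackTransfer} together with the standard identification of the $N$-geometric fixed points of a suspension spectrum $\Sigma^\infty X$ with $\Sigma^\infty(X^N)$. Your remark that the space-level fixed-point comparison must ultimately be upgraded to the spectrum-level statement (as in \cite[II \S9]{LMayS}) is the right caveat, and once that is granted the specialization to $X=S^V$ is formal.
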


\begin{cor}\label{cor:PullBackEquivalence}
The inclusion of the zero cell induces an equivalence
\[
H\phi_N^{\ast}\mM\to S^{\rho_{G}-\rho_{G/N}}\wedge H\phi_N^{\ast}\mM
\]
for any $\mM$ on $G/N$.
\end{cor}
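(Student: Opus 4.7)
The plan is to deduce the equivalence directly from Corollary~\ref{cor:SuspensionsPullback} applied to the virtual representation $V = \rho_G - \rho_{G/N}$ and the $G/N$-spectrum $Y = H\mM$. Using Proposition~\ref{prop:PullBackMackey} to identify $H\phi_N^{\ast}\mM = \phi_N^{\ast} H\mM$, we would obtain a chain of equivalences
\[
S^{\rho_G - \rho_{G/N}} \wedge H\phi_N^{\ast}\mM \;=\; S^V \wedge \phi_N^{\ast} H\mM \;\simeq\; \phi_N^{\ast}\!\left(S^{V^N} \wedge H\mM\right),
\]
so the entire argument reduces to the identity $V^N = 0$ together with a check that the equivalence is induced by the zero cell inclusion.

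The first concrete step is the representation-theoretic computation of $V^N$. Elements of $\rho_G = \R[G]$ fixed by $N$ are linear combinations of the coset sums $\sum_{n \in N} gn$ as $gN$ ranges over $G/N$; these span a $G/N$-stable subspace isomorphic to $\rho_{G/N}$. Since $\rho_{G/N}$ is pulled back from the quotient and therefore has trivial $N$-action, its $N$-fixed points are itself. Hence
\[
V^N \;=\; (\rho_G)^N - (\rho_{G/N})^N \;=\; \rho_{G/N} - \rho_{G/N} \;=\; 0,
\]
so $S^{V^N} = S^0$ and the right-hand side of the display above collapses to $\phi_N^{\ast}(S^0 \wedge H\mM) = H\phi_N^{\ast}\mM$.

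It remains to identify the resulting equivalence with the map induced by the zero cell inclusion $S^0 \hookrightarrow S^{\rho_G - \rho_{G/N}}$. Unwinding Corollary~\ref{cor:SuspensionsPullback}, the equivalence comes from smashing the inclusion $S^{V^N} \hookrightarrow S^V$ with $\tilde{E}\mathcal F[N]$; since $\tilde{E}\mathcal F[N]$ is idempotent and $H\phi_N^{\ast}\mM$ is already $\tilde{E}\mathcal F[N]$-local, this inclusion becomes an equivalence on $H\phi_N^{\ast}\mM$. Because $V^N = 0$, the inclusion $S^{V^N} \hookrightarrow S^V$ is exactly the zero cell inclusion, which gives the statement.

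The only real obstacle is cosmetic rather than mathematical: making sure that the formal manipulation of the virtual representation $\rho_G - \rho_{G/N}$ is legitimate (one can, if desired, first suspend by $\rho_{G/N}$ to work with an honest representation), and that the ``zero cell'' interpretation of the map is consistent with the construction in Corollary~\ref{cor:SuspensionsPullback}. Once the $N$-fixed point calculation is in hand, everything else is a direct reading of the preceding propositions.
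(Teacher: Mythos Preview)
Your proposal is correct and follows exactly the paper's approach: the paper's proof is the single sentence ``The $N$-fixed points of $\rho_{G}-\rho_{G/N}$ are $\{0\}$,'' which is precisely the computation you carry out before invoking Corollary~\ref{cor:SuspensionsPullback}. Your additional remarks justifying the fixed-point calculation and identifying the map with the zero-cell inclusion are reasonable elaborations of what the paper leaves implicit.
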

\begin{proof}
The $N$-fixed points of $\rho_{G}-\rho_{G/N}$ are $\{0\}$. 
\end{proof}

\begin{cor}\label{cor:PullBackDimension}
The spectrum $H\phi_N^{\ast}\mM$ is in $\tau_{\geq |N|-1}$.
\end{cor}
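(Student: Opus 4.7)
The plan is to combine the equivalence of Corollary~\ref{cor:PullBackEquivalence} with the slice-connectivity estimate of Theorem~\ref{thm:NegPermutation}, closing the loop via the smash-product stability of Proposition~\ref{prop:SmashPositive}.

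First, I would observe that $H\phi_N^{\ast}\mM$ is an Eilenberg-Mac Lane spectrum for a Mackey functor, hence $(-1)$-connected, so $H\phi_N^{\ast}\mM \in \tau_{\geq 0}$. Next, by Theorem~\ref{thm:NegPermutation}, the virtual representation sphere $S^{\rho_G-\rho_{G/N}}$ lies in $\tau_{\geq |N|-1}$. Proposition~\ref{prop:SmashPositive} applied to $X = H\phi_N^{\ast}\mM$ and $Y = S^{\rho_G-\rho_{G/N}}$ then shows that
\[
S^{\rho_G-\rho_{G/N}}\wedge H\phi_N^{\ast}\mM \in \tau_{\geq |N|-1}.
\]
Finally, Corollary~\ref{cor:PullBackEquivalence} identifies this smash product with $H\phi_N^{\ast}\mM$ itself, yielding the desired slice-connectivity bound.

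There is really no main obstacle: the whole point of Corollary~\ref{cor:PullBackEquivalence} is to convert a pullback into a spectrum smashed by the highly slice-positive sphere $S^{\rho_G-\rho_{G/N}}$, and once that conversion is available, the bound drops out immediately from the two stability statements already proved. The only thing worth flagging is that one needs $H\phi_N^{\ast}\mM$ to be in $\tau_{\geq 0}$ to invoke Proposition~\ref{prop:SmashPositive} (rather than some more delicate smash-product estimate), but this is automatic for any Eilenberg-Mac Lane spectrum.
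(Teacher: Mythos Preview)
Your argument is correct and matches the paper's proof essentially line for line: both invoke Theorem~\ref{thm:NegPermutation} for $S^{\rho_G-\rho_{G/N}}\in\tau_{\geq |N|-1}$, note that $H\phi_N^\ast\mM\in\tau_{\geq 0}$, combine these via the smash-product estimate (you cite Proposition~\ref{prop:SmashPositive}, the paper cites its immediate special case Corollary~\ref{cor:SmashOrderG} with $k=0$), and finish with Corollary~\ref{cor:PullBackEquivalence}.
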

\begin{proof}
By Theorem~\ref{thm:NegPermutation}, the spectrum $S^{-\rho_{G/N}}$ is in $\tau_{\geq -|G|+|N|-1}$. Thus $S^{\rho_{G}-\rho_{G/N}}$ is in $\tau_{\geq |N|-1}$. Since $H\phi_N^{\ast}\mM$ is $(-1)$-connected, it is in $\tau_{\geq 0}$, and hence the smash product 
\[
S^{\rho_{G}-\rho_{G/N}}\wedge H\phi_N^{\ast}\mM
\]
is in $\tau_{\geq |N|-1}$ by Corollary~\ref{cor:SmashOrderG}. By Corollary~\ref{cor:PullBackEquivalence}, this spectrum is $H\phi_N^{\ast}\mM$.
\end{proof}

\subsection{Pullbacks and Slices}
The relationship between the $N$-geometric fixed points and the pulled-back Mackey functors generates for us a very large collection of slices. Simply put: certain slices for $G/N$ pull back to slices for $G$. We will first show an analogous result for spectra less than or equal to a fixed number.

\begin{thm}\label{thm:PullBackSlices}
If $X$ is $\leq (j-1)$ for $G/N$, then $\phi_N^{\ast}X$ is $\leq (j|N|-1)$ for $G$.
\end{thm}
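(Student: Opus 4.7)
The plan is to show directly that $\phi_N^{\ast}X$ is $P^{j|N|-1}$-local, i.e.\ that $[Z,\phi_N^{\ast}X]_G=0$ for every $Z$ in the localizing subcategory $\tau_{\geq j|N|}^G$. Because this category is generated by the slice cells $Z = G_+ \wedge_H S^{k\rho_H-\epsilon}$ with $k|H| - \epsilon \geq j|N|$ and $\epsilon\in\{0,1\}$, it suffices to check the vanishing on each such generator (and its suspensions, which are automatically handled by closure of $\tau_{\geq j|N|}^G$ under $\Sigma$).

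For each generator $Z$, Proposition~\ref{prop:PullBackGeomFP} gives a natural isomorphism
\[
[Z,\phi_N^{\ast}X]_G \cong \bigl[(\tilde E\mathcal F[N] \wedge Z)^{N}, X\bigr]_{G/N}.
\]
The hypothesis that $X \leq j-1$ for $G/N$ says exactly that the right-hand side vanishes as soon as $(\tilde E\mathcal F[N] \wedge Z)^{N} \in \tau_{\geq j}^{G/N}$. So the entire problem reduces to checking that this $N$-geometric fixed-point spectrum lies in $\tau_{\geq j}^{G/N}$ for each generator $Z$ of $\tau_{\geq j|N|}^G$.

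I would then split into two cases on the relation of $H$ and $N$. If $N \not\subseteq H$, then $G/H \in \mathcal F[N]$, so $G/H_+$ is built from cells of $E\mathcal F[N]$; hence $\tilde E\mathcal F[N] \wedge G/H_+$, and therefore $\tilde E\mathcal F[N] \wedge Z$, is contractible, and the claim is trivial. If $N \subseteq H$, then the standard description of the $N$-geometric fixed points of an induced sphere---using $(G/H)^N = (G/N)/(H/N)$ as $G/N$-sets and $(\rho_H)^N = \rho_{H/N}$ as $H/N$-representations---yields
\[
(\tilde E\mathcal F[N] \wedge Z)^{N} \simeq (G/N)_+ \wedge_{H/N} S^{k\rho_{H/N}-\epsilon},
\]
which is itself a tautological generator of $\tau_{\geq k|H/N|-\epsilon}^{G/N}$.

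The argument therefore reduces to the purely arithmetic check that, from $k|H|-\epsilon \geq j|N|$, one can deduce $k|H/N| - \epsilon \geq j$, equivalently $k|H| \geq (j+\epsilon)|N|$. This is immediate for $\epsilon=0$; the $\epsilon = 1$ case is the key point and the main obstacle. It is overcome by the divisibility remark that, since $N \subseteq H$, the integer $k|H|$ is automatically a multiple of $|N|$, so the inequality $k|H| \geq j|N|+1$ upgrades at no cost to $k|H| \geq (j+1)|N|$, which is exactly the inequality required. This divisibility observation is what pins down the precise factor of $|N|$ appearing in the statement of the theorem.
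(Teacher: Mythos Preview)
Your proof is correct and essentially identical to the paper's: both reduce to the generating slice cells via Proposition~\ref{prop:PullBackGeomFP}, split into the cases $N\not\subseteq H$ (contractible geometric fixed points) and $N\subseteq H$ (yielding $(G/N)_+\wedge_{H/N}S^{k\rho_{H/N}-\epsilon}$), and then finish with the same divisibility observation that $|N|\mid k|H|$ upgrades $k|H|\geq j|N|+\epsilon$ to $k|H|\geq (j+\epsilon)|N|$.
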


\begin{proof}
We must show that if $G_{+}\wedge_{H} S^{k\rho_{H}-\epsilon}$ is such that $k|H|-\epsilon \geq j|N|$, then 
\[
[G_{+}\wedge_{H} S^{k\rho_{H}-\epsilon},\phi_{N}^{\ast}X]_{G}=0.
\]

We have a natural equivalence
\[
\big(\tilde{E}\mathcal F[N]\wedge (G_{+}\wedge_{H}S^{k\rho_{H}-\epsilon})\big)^{N}\simeq \begin{cases}
* & N\not\subseteq H, \\
(G/N)_{+}\wedge_{H/N} S^{k\rho_{H/N}-\epsilon} & N\subseteq H.
\end{cases}
\]

We therefore learn that we always have
\[
[G_{+}\wedge_{H} S^{k\rho_{H}-\epsilon},\phi_{N}^{\ast}X]_{G}=\begin{cases}
0 &  N\not\subseteq H, \\
\big[(G/N)_{+}\wedge_{H/N} S^{k\rho_{H/N}-\epsilon}, X\big]_{G/N} & N\subseteq H.
\end{cases}
\]

Now our assumptions about $X$ come into play. Since $N\subseteq H$, if $k|H|-\epsilon\geq j|N|$, then $(k |H/N|-j)|N|\geq \epsilon$. Since the left-hand side is an integer divisible by $|N|$, we conclude that $(k |H/N|-j)|N|\geq |N|\epsilon$, and therefore $k |H/N|-\epsilon\geq j$. By assumption $X$ was $\leq (j-1)$ for $G/N$, and therefore any maps from something greater than or equal to $j$ are zero. Thus if $k|H|-\epsilon\geq j|N|$, then
\[
\big[(G/N)_{+}\wedge_{H/N} S^{k\rho_{H/N}-\epsilon}, X\big]_{G/N}=0,
\]
and we are done.
\end{proof}

\begin{cor}\label{cor:PullBackZeroSlices}
If $H\mM$ is a zero slice for $G/N$, then $H\phi_{N}^{\ast}\mM$ is a $(|N|-1)$-slice for $G$.
\end{cor}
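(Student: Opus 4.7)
The plan is to verify the two conditions that together characterize a $(|N|-1)$-slice: membership in $\tau_{\geq |N|-1}$ and being $\leq (|N|-1)$. Both halves are already essentially in hand from the preceding results, so the argument is mostly bookkeeping.

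For the lower bound, I would simply cite Corollary~\ref{cor:PullBackDimension}, which tells us outright that $H\phi_N^\ast\mM \in \tau_{\geq |N|-1}$ for any Mackey functor $\mM$ on $G/N$, independent of whether $H\mM$ is a zero slice. No further work is needed here.

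For the upper bound, I would unpack the hypothesis: saying that $H\mM$ is a zero slice for $G/N$ means in particular that $H\mM$ is $\leq 0 = (1-1)$ in the $G/N$-slice filtration. I would then apply Theorem~\ref{thm:PullBackSlices} with $j=1$, which yields that $\phi_N^\ast(H\mM)$ is $\leq (1\cdot|N|-1) = |N|-1$ in the $G$-slice filtration. Using Proposition~\ref{prop:PullBackMackey} to rewrite $\phi_N^\ast(H\mM) = H\phi_N^\ast\mM$ finishes this half.

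Combining the two bounds shows $H\phi_N^\ast\mM$ is simultaneously $\geq |N|-1$ and $\leq |N|-1$, which is precisely the definition of a $(|N|-1)$-slice. There is no real obstacle; the corollary is essentially the $j=1$ case of Theorem~\ref{thm:PullBackSlices} paired with the already-established slice-connectivity estimate, and the only point that deserves attention in the write-up is the translation between ``zero slice'' and the coconnectivity condition $\leq 0$ needed to feed into the theorem.
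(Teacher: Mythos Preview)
Your proposal is correct and matches the paper's proof essentially line for line: the paper also invokes Corollary~\ref{cor:PullBackDimension} for the lower bound and Theorem~\ref{thm:PullBackSlices} (with $j=1$) for the upper bound. Your explicit mention of Proposition~\ref{prop:PullBackMackey} to identify $\phi_N^\ast(H\mM)$ with $H\phi_N^\ast\mM$ is a detail the paper leaves implicit, but otherwise the arguments are identical.
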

\begin{proof}
Corollary~\ref{cor:PullBackDimension} shows that $H\phi_{N}^{\ast}\mM$ is in $\tau_{\geq (|N|-1)}$. Theorem~\ref{thm:PullBackSlices} shows it is also less than or equal to $(|N|-1)$, and therefore it is a slice.
\end{proof}

This clearly points to a much deeper story.
\begin{conj}\label{conj:PullBackSlices}
The slice tower for a $G/N$-spectrum pulls back to the slice tower for the pullback $G$-spectrum where the $k$-slice becomes the $\big((k+1)|N|-1\big)$-slice.
\end{conj}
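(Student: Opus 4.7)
My plan is to reduce the conjecture to a pointwise statement about slices, prove the half of that statement corresponding to ``the pullback tower hits the right stages'', and then identify the main obstacle in the remaining half. Since $\phi_{N}^{\ast}=\tilde{E}\mathcal F[N]\wedge-$ preserves cofibers, wedges, retracts, and filtered colimits, applying it to the slice tower of a $G/N$-spectrum $X$ produces a tower $\{\phi_{N}^{\ast}P^{k}(X)\}_{k}$ with fibers $\phi_{N}^{\ast}P_{k}^{k}(X)$. The conjecture then amounts to showing, for every $k$, that (i) $\phi_{N}^{\ast}P^{k}(X)\leq(k+1)|N|-1$ as a $G$-spectrum, and (ii) $\phi_{N}^{\ast}P_{k}^{k}(X)\in\tau_{\geq(k+1)|N|-1}^{G}$. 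Point (i) is immediate from Theorem~\ref{thm:PullBackSlices} with $j=k+1$, so all of the work lies in (ii).

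The first ingredient I would prove is a weaker ``Key Lemma'': if $Y\in\tau_{\geq j}^{G/N}$ then $\phi_{N}^{\ast}Y\in\tau_{\geq j|N|}^{G}$. Because $\phi_{N}^{\ast}$ preserves all the closure operations, it suffices to check this on the generators $(G/N)_{+}\wedge_{H/N}S^{k\rho_{H/N}-\epsilon}$ with $k|H/N|-\epsilon\geq j$. Writing $H$ for the preimage of $H/N$ in $G$ (so $N\leq H$), the projection formula together with the obvious restriction of $\tilde{E}\mathcal F[N]$ gives
\[
\phi_{N}^{\ast}\bigl((G/N)_{+}\wedge_{H/N}S^{k\rho_{H/N}-\epsilon}\bigr)\simeq G_{+}\wedge_{H}\bigl(\tilde{E}\mathcal F[N]\wedge S^{k\rho_{H/N}-\epsilon}\bigr).
\]
Applying Corollary~\ref{cor:SuspensionsPullback} to $N\lhd H$ with $Y=S^{0}$ identifies $\tilde{E}\mathcal F[N]\wedge S^{k\rho_{H/N}}\simeq\tilde{E}\mathcal F[N]\wedge S^{k\rho_{H}}$; since $N\leq H$ and $k|H/N|-\epsilon\geq j$ force $k|H|-\epsilon\geq j|N|$, the sphere $S^{k\rho_{H}-\epsilon}$ is already a generator of $\tau_{\geq j|N|}^{H}$, and smashing with the $(-1)$-connected $\tilde{E}\mathcal F[N]$ preserves slice-connectivity by Proposition~\ref{prop:SmashPositive}. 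Inducing back up to $G$ finishes the Key Lemma. Applied to the fiber of $X\to P^{k}(X)$ with $j=k+1$, it gives $\phi_{N}^{\ast}P_{\geq k+1}(X)\in\tau_{\geq(k+1)|N|}^{G}$, which combined with (i) identifies $\phi_{N}^{\ast}P^{k}(X)$ with $P^{(k+1)|N|-1}(\phi_{N}^{\ast}X)$ via the universal property of the slice tower.

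The genuine obstruction is (ii): the Key Lemma applied to $P_{k}^{k}(X)\in\tau_{\geq k}^{G/N}$ only puts $\phi_{N}^{\ast}P_{k}^{k}(X)$ in $\tau_{\geq k|N|}^{G}$, leaving a slice-connectivity gap of $|N|-1$. The natural tool for closing this gap is the twist invariance $\phi_{N}^{\ast}P\simeq S^{\rho_{G}-\rho_{G/N}}\wedge\phi_{N}^{\ast}P$ (iterating Corollary~\ref{cor:SuspensionsPullback}) together with $S^{\rho_{G}-\rho_{G/N}}\in\tau_{\geq|N|-1}$ from Theorem~\ref{thm:NegPermutation}: the gap would close instantly given any Leibniz-type bound $S^{\rho_{G}-\rho_{G/N}}\wedge\tau_{\geq k|N|}^{G}\subseteq\tau_{\geq(k+1)|N|-1}^{G}$. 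Using $\phi_{N}^{\ast}(\Sigma^{\rho_{G/N}}P)\simeq\Sigma^{\rho_{G}}\phi_{N}^{\ast}P$, I would first reduce via $G/N$-suspension to the representatives $k\in\{-1,0,1,\dots,|G/N|-2\}$; the cases $k=-1$ and $k=0$ are then settled by the classification of $(-1)$-slices and Corollary~\ref{cor:PullBackZeroSlices} respectively, which in particular proves the conjecture whenever $|G/N|\leq 2$.

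The hard part will be the intermediate range $1\leq k\leq|G/N|-2$, where no Mackey-functor description of the $k$-slices is available. Two lines of attack suggest themselves. The first is a d\'evissage on $N$: choose a chain $1=N_{0}\lhd N_{1}\lhd\cdots\lhd N_{r}=N$ of subgroups each normal in $G$, use the factorization $\phi_{N}^{\ast}\simeq\phi_{N_{1}}^{\ast}\circ\phi_{N/N_{1}}^{\ast}$, and observe that the shift $k\mapsto(k+1)|N|-1$ agrees with the successive composition of the shifts for $N_{1}$ and $N/N_{1}$, so an induction on $r$ reduces to the case of $N$ minimal normal in $G$. The second is to tackle the Leibniz bound head-on, exploiting the idempotence of $\tilde{E}\mathcal F[N]$ under smash product together with the cellular decomposition of $\rho_{G}-\rho_{G/N}$. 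Since this multiplicative step sits squarely in the regime the paper itself flags as poorly understood, I expect it to be the principal obstacle to the full conjecture.
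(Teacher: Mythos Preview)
The statement in question is labeled \emph{Conjecture} in the paper, and the paper does not prove it. What you have written is not a proof either, and you are admirably honest about that: you isolate the precise obstruction and stop. So the right comparison is between your partial progress and the paper's partial progress.

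On that front, you have essentially rediscovered everything the paper proves around this conjecture. Your ``Key Lemma'' is exactly Theorem~\ref{thm:SliceLowerBound} (first part), proved the same way, via generators and Corollary~\ref{cor:SuspensionsPullback}. Your point~(i) is Theorem~\ref{thm:PullBackSlices}. Your observation that~(i) and the Key Lemma together pin down $\phi_{N}^{\ast}P^{k}(X)\simeq P^{(k+1)|N|-1}(\phi_{N}^{\ast}X)$ is a clean consequence the paper does not state explicitly, but which follows from its two theorems by the same universal-property argument you give. Your reduction modulo $|G/N|$ and disposal of the residues $k\equiv -1,0$ is the content of the Proposition immediately following Theorem~\ref{thm:SliceLowerBound}, and your conclusion for $|G/N|\leq 2$ is its Corollary (the case $N=G$ being Theorem~\ref{thm:GeomSliceTower}).

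The gap you name---that the Key Lemma only gives $\phi_{N}^{\ast}P_{k}^{k}(X)\in\tau_{\geq k|N|}$ rather than $\tau_{\geq (k+1)|N|-1}$, a shortfall of $|N|-1$---is precisely what the paper calls the ``weaker-than-desired lower bound'', and your proposed attack via the twist $S^{\rho_{G}-\rho_{G/N}}$ and a hoped-for Leibniz bound runs into exactly the multiplicativity failure the paper flags in \S\ref{sec:Basics}. Neither your d\'evissage on $N$ nor the cellular approach closes this; the paper does not close it either. As the paper notes (in the remark after Theorem~\ref{thm:SliceLowerBound} and in the footnote in \S7), the conjecture was subsequently settled by Ullman using the \emph{regular} slice filtration, where the offending $\epsilon=1$ generators are absent and the second half of Theorem~\ref{thm:SliceLowerBound} already gives the sharp bound.
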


We will see in our discussion of geometric spectra that this conjecture holds true for $N=G$. For now, we provide a few more results in this vein. The first is a weaker-than-desired lower bound.

\begin{thm}\label{thm:SliceLowerBound}
If $X$ is in $\tau_{\geq j-1}^{G/N}$, then $\phi_{N}^{\ast}(X)$ is in $\tau_{\geq (j-1)|N|}^{G}$.

If $X$ is actually in the localizing subcategory generated by $G_+\wedge_H S^{k\rho_H}$ for $k\cdot |H|\geq j-1$, then $\phi_N^\ast(X)$ is in $\tau_{\geq j|N|-1}$.
\end{thm}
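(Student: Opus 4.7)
The plan is to reduce to a set of generators and handle them by a direct calculation that absorbs the dimension defect between inflated and honest regular representations into the localization spectrum $\tilde{E}\mathcal F[N]$ itself. Since $\phi_N^{\ast}(-)=\tilde{E}\mathcal F[N]\wedge(-)$ (with the $G/N$-spectrum first viewed as a $G$-spectrum through $G\to G/N$) preserves wedges, cofibers, retracts, and directed colimits, the class of $G/N$-spectra $X$ for which $\phi_N^{\ast}(X)$ lies in any prescribed localizing subcategory of $G$-spectra is itself localizing. It therefore suffices to verify each clause on generators.

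The essential preliminary is the stronger bound $\tilde{E}\mathcal F_H[N]\in\tau_{\geq|N|-1}^H$ for every $N\leq H\leq G$. Corollary~\ref{cor:SuspensionsPullback} applied to $Y=S^0$ and to the representation $\rho_H-\rho_{H/N}$ (whose $N$-fixed points vanish) yields the self-equivalence $\tilde{E}\mathcal F_H[N]\simeq S^{\rho_H-\rho_{H/N}}\wedge\tilde{E}\mathcal F_H[N]$. Theorem~\ref{thm:NegPermutation} puts the sphere in $\tau_{\geq|N|-1}^H$; since $\tilde{E}\mathcal F_H[N]$ is $(-1)$-connected, hence in $\tau_{\geq 0}^H$, Proposition~\ref{prop:SmashPositive} lifts the smash product into $\tau_{\geq|N|-1}^H$.

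Now take a generator $Y=(G/N)_+\wedge_{H/N}S^{k\rho_{H/N}-\epsilon}$ of $\tau_{\geq j-1}^{G/N}$, so $N\leq H$ and $k|H/N|-\epsilon\geq j-1$. Inflation commutes with induction and restricts $\tilde{E}\mathcal F[N]$ to $\tilde{E}\mathcal F_H[N]$ on $H$, so the projection formula gives
\[
\phi_N^{\ast}(Y)\simeq G_+\wedge_H\bigl(\tilde{E}\mathcal F_H[N]\wedge S^{V-\epsilon}\bigr),
\]
where $V$ denotes the pullback of $k\rho_{H/N}$ to an $H$-representation along $H\to H/N$. The $H/N$-representations $(k\rho_H)^N$ and $V^N$ are both isomorphic to $k\rho_{H/N}$, so two applications of Corollary~\ref{cor:SuspensionsPullback}, with $Y=S^0$ and with representations $k\rho_H$ and $V$ respectively, produce the identification
\[
\tilde{E}\mathcal F_H[N]\wedge S^{V-\epsilon}\simeq\tilde{E}\mathcal F_H[N]\wedge S^{k\rho_H-\epsilon}.
\]
This replacement of the pulled-back regular representation by the honest $H$-regular representation is the main technical step.

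It remains to bound the right side and to induce up, which preserves slice-connectivity. For $\epsilon=0$, Corollary~\ref{cor:SmashOrderG} applied to $S^{k\rho_H}\in\tau_{\geq k|H|}^H$ and $\tilde{E}\mathcal F_H[N]\in\tau_{\geq|N|-1}^H$ gives $\tau_{\geq k|H|+|N|-1}^H$; for $\epsilon=1$, Proposition~\ref{prop:SmashPositive} applied to $\tilde{E}\mathcal F_H[N]\in\tau_{\geq 0}^H$ and $S^{k\rho_H-1}\in\tau_{\geq k|H|-1}^H$ gives $\tau_{\geq k|H|-1}^H$. The hypothesis $k|H/N|-\epsilon\geq j-1$ rewrites as $k|H|\geq (j-1+\epsilon)|N|$. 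For $\epsilon=0$ this yields $k|H|+|N|-1\geq j|N|-1\geq (j-1)|N|$, which covers the first statement and, when only $\epsilon=0$ generators appear, the sharper second statement; for $\epsilon=1$ one has $k|H|-1\geq j|N|-1\geq (j-1)|N|$, which covers the remaining part of the first statement. The main obstacle is the third paragraph: without the replacement identifying $V$ with $k\rho_H$ after smashing, the sphere $S^V$ sits only in $\tau_{\geq k|H/N|}^H$, which is too weak by a factor of $|N|$ to reach the claimed slice-connectivity.
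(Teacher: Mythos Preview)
Your proof is correct and follows essentially the same route as the paper's: reduce to generators, use Corollary~\ref{cor:SuspensionsPullback} to replace the inflated regular-representation sphere by the honest one, and invoke the bound $\tilde{E}\mathcal F[N]\in\tau_{\geq |N|-1}$ coming from Theorem~\ref{thm:NegPermutation}. The only cosmetic difference is that the paper first uses $\phi_N^\ast\bigl((G/N)_+\wedge_{H/N}X\bigr)\simeq G_+\wedge_H\phi_N^\ast X$ to reduce to the case $H=G$, whereas you carry the subgroup $H$ along throughout; the computations are otherwise identical.
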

\begin{proof}
Pulling back preserves cofiber sequences and extensions, so we need only show that this is true for the generators of $\tau_{\geq j-1}^{G/N}$. Since for any $H$ containing $N$, we have
\[
\phi_{N}^{\ast}\big((G/N)_{+}\wedge_{H/N} X\big)\simeq G_{+}\wedge_{H}\phi_{N}^{\ast}X
\]
(the only possibly non-obvious step here is that $\tilde{E}\mathcal F[N]$ for $G$ restricts to $\tilde{E}\mathcal F[N]$ for $H$), it suffices to prove this for $H=G$. We therefore show that for any $k$ and $\epsilon=0,1$ such that $k|G/N|-\epsilon \geq j-1$, we have $\phi_{N}^{\ast}(S^{k\rho_{G/N}-\epsilon})$ is in $\tau_{\geq (j-1)|N|}$. By Corollary~\ref{cor:SuspensionsPullback}, we have an equivalence
\[
\phi_{N}^{\ast}(S^{k\rho_{G/N}-\epsilon})\simeq S^{k\rho_{G}-\epsilon}\phi_{N}^{\ast}(S^{0}).
\]
Since $\phi_{N}^{\ast}(S^{0})$ is $(-1)$-connected, it is in $\tau_{\geq 0}$. Thus we need only show that 
\[k|G|-\epsilon \geq (j-1)|N|.\] 
This, however, is obvious.

For the second part, we observe that $\epsilon$ is always zero here. Now we are looking at a $k\rho_G$-suspension of $\phi_N^\ast(S^0)$. Since
\[
\phi_N^\ast(S^0)\simeq S^{\rho_G-\rho_{G/N}}\phi_N^\ast(S^0),
\]
we conclude that $\phi_N^\ast(S^0)$ is in $\tau_{\geq |N|-1}$. Hence
\[
\sdim\big(S^{k\rho_G}\phi_N^\ast(S^0)\big)=k\cdot |G|+\sdim\big(\phi_N^\ast(S^0)\big)\geq j|N|-1.\qedhere
\]
\end{proof}

\begin{remark}
The second half of the previous theorem shows that if we consider the localizing categories generated only by the regular representation spheres, then Conjecture~\ref{conj:PullBackSlices} is true. Ullman has recently studied this ``regular slice filtration'', and he independently proved that in this context, slices for $G/N$ pull back to slice for $G$ \cite{Ul12}.
\end{remark}

Suspension invariance and the way suspensions pass through pullbacks show that in two cases, we completely understand how slices pullback.
\begin{prop}
If $(j-1)\equiv 0,-1\mod |G/N|$, then $(j-1)$-slices for $G/N$ pullback to $(j|N|-1)$-slices for $G$.
\end{prop}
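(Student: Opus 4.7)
The plan is to reduce both residue classes to computations about pullbacks of Eilenberg--Mac Lane spectra, where all the work has essentially been done. First note that the upper bound is automatic: a $(j-1)$-slice $X$ for $G/N$ is in particular $\leq j-1$, so Theorem~\ref{thm:PullBackSlices} gives $\phi_N^\ast X \leq j|N|-1$. The remaining content is the slice-connectivity bound $\phi_N^\ast X \in \tau_{\geq j|N|-1}^G$, and the strategy is to peel off a $k\rho_{G/N}$-suspension, push it across $\phi_N^\ast$ via Corollary~\ref{cor:SuspensionsPullback}, and apply Corollary~\ref{cor:PullBackDimension}.

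In the residue-$0$ case write $j-1 = k|G/N|$. By Theorem~\ref{thm:suspensions} (applied to $G/N$) together with the classification of $0$-slices in Theorem~\ref{thm:zeroslices}, we have $X \simeq \Sigma^{k\rho_{G/N}} H\mM$ for some Mackey functor $\mM$ on $G/N$ with injective restrictions. Taking $V = k\rho_G$ in Corollary~\ref{cor:SuspensionsPullback} and observing that $(k\rho_G)^N = k\rho_{G/N}$ gives, via Proposition~\ref{prop:PullBackMackey},
\[
\phi_N^\ast X \simeq \Sigma^{k\rho_G}\phi_N^\ast H\mM \simeq \Sigma^{k\rho_G} H\phi_N^\ast \mM .
\]
Corollary~\ref{cor:PullBackDimension} tells us that $H\phi_N^\ast\mM \in \tau_{\geq |N|-1}^G$, so Theorem~\ref{thm:suspensions} places $\phi_N^\ast X$ in $\tau_{\geq k|G| + |N|-1}^G = \tau_{\geq j|N|-1}^G$, as needed.

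In the residue-$(-1)$ case write $j-1 = k|G/N|-1$. Using Theorem~\ref{thm:suspensions} together with the identification of $(-1)$-slices as $\Sigma^{-1}H\mM$, we get $X\simeq \Sigma^{k\rho_{G/N}-1}H\mM$, and the same application of Corollary~\ref{cor:SuspensionsPullback} yields
\[
\phi_N^\ast X \simeq \Sigma^{k\rho_G - 1}H\phi_N^\ast\mM \simeq \Sigma^{k\rho_G}\bigl(\Sigma^{-1}H\phi_N^\ast\mM\bigr).
\]
The main (mild) obstacle is here: we cannot simply invoke Corollary~\ref{cor:PullBackDimension} and desuspend, since $\tau_{\geq n}$ is not closed under desuspension. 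Instead we use the explicit identification $\tau_{\geq -1}^G = \{\text{$(-2)$-connected $G$-spectra}\}$ recorded earlier. Since $H\phi_N^\ast\mM$ is $(-1)$-connected, $\Sigma^{-1}H\phi_N^\ast\mM$ is $(-2)$-connected, hence lies in $\tau_{\geq -1}^G$. Theorem~\ref{thm:suspensions} then puts $\phi_N^\ast X$ in $\tau_{\geq k|G|-1}^G = \tau_{\geq j|N|-1}^G$, completing the proof.
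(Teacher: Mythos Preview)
Your proof is correct and follows essentially the same route as the paper: reduce via $\rho_{G/N}$-suspension to the cases of $0$-slices and $(-1)$-slices, push the suspension across $\phi_N^\ast$ using Corollary~\ref{cor:SuspensionsPullback}, and invoke the known pullback behavior of $H\mM$ (Corollaries~\ref{cor:PullBackDimension} and~\ref{cor:PullBackZeroSlices}, Proposition~\ref{prop:PullBackMackey}). The only cosmetic difference is that you separate the upper and lower bounds and are a bit more explicit in the $\epsilon=1$ case about why $\Sigma^{-1}H\phi_N^\ast\mM$ lies in $\tau_{\geq -1}^G$, whereas the paper simply notes that $(-1)$-slices pull back to $(-1)$-slices.
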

\begin{proof}
If $(j-1)=k|G/N|-\epsilon$, then a $(j-1)$-slice is a $(k\rho_{G/N})$-suspension of a $(-\epsilon)$-slice. By Corollary~\ref{cor:SuspensionsPullback}, $S^{k\rho_{G/N}}$ pulls back to $S^{k\rho_{G}}$. Corollary~\ref{cor:PullBackZeroSlices} shows that zero slices pullback to $(|N|-1)$-slices. Proposition~\ref{prop:PullBackMackey} shows that $(-1)$-slices pullback to $(-1)$-slices, and the result is proved.
\end{proof}

\begin{cor}
If $[G:N]=2$, then slices for $G/N$ pull back to slices for $G$.
\end{cor}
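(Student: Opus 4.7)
The plan is to derive this corollary as an immediate consequence of the preceding proposition, which establishes that $(j-1)$-slices for $G/N$ pull back to $(j|N|-1)$-slices for $G$ whenever $(j-1) \equiv 0$ or $-1 \pmod{|G/N|}$. When $[G:N] = 2$, we have $|G/N| = 2$, so there are only two residue classes modulo $2$, namely $0$ and $1$; moreover $1 \equiv -1 \pmod 2$, so these two classes are exactly the pair already covered by the hypothesis of that proposition.

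Consequently, for every integer $j$, the congruence hypothesis is automatically satisfied, and hence every $(j-1)$-slice for $G/N$ pulls back to a $(j|N|-1)$-slice for $G$. This is exactly Conjecture~\ref{conj:PullBackSlices} specialized to the case $[G:N]=2$. In effect, when the index is two, the two ``boundary'' cases in the previous proposition, zero slices (handled by Corollary~\ref{cor:PullBackZeroSlices}) and $(-1)$-slices (handled by Proposition~\ref{prop:PullBackMackey}), together with suspension invariance through $\rho_{G/N}$-suspensions via Corollary~\ref{cor:SuspensionsPullback}, already exhaust every slice dimension.

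There is no genuine obstacle here: the whole content is the arithmetic observation that $\mathbb{Z}/2 = \{0, -1\}$. The substantive work was done in the preceding proposition, and what the index-two hypothesis buys us is simply that the small modulus collapses the two classes of congruences we already understand into a complete set of residues.
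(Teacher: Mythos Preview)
Your argument is correct and is exactly the intended one: the paper states this corollary with no proof because it is immediate from the preceding proposition, and your observation that every integer is congruent to $0$ or $-1$ modulo $2$ is precisely why.
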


\section{The slices of $H\mM$ for cyclic $p$-groups}\label{sec:Burnside}
We will use our understanding of pullbacks to determine all of the slices of $H\mA$ and $H\mM$ for an arbitrary cyclic $p$-group, where $\mA$ is the Burnside Mackey functor and where $\mM$ is arbitrary. Our method uses some amusing properties of $\mA$ which are specific to cyclic $p$-groups, namely the simplicity of the augmentation ideal. For a general $\mM$, the story is slightly trickier, and interestingly, our slice filtration on $H\mM$ is essentially the same as the filtration used by Barwick in his proof of the Carlsson conjecture \cite{Barwick}.
\subsection{The slice tower for $H\mA$}
For $G=C_{p^n}$, the slices of the {\EM} spectrum for the Burnside Mackey $\mA$ functor are actually shockingly simple and the slice filtration arises in an obvious algebraic way. 

Since $H\mA$ is $(-1)$-connected, we know that $H\mA$ is in $\tau_{\geq 0}$. We also know that the zero slice assigns to $G/H$ the group
\[
Im\big(\mA(G/H)\to\mA(G/\{e\})\big)=Im\big(A(H)\to \Z\big),
\]
where the map is the augmentation sending a finite $H$-set to its cardinality. This is the well-known constant Mackey functor $\mZ$: the value on $G/H$ is $\mathbb Z$, all restriction maps are the identity, and transfers are multiplication by the index. Let $\mI$ denote the augmentation ideal, the kernel of the map $\mA \to \mZ$. This we can write as a sum of pullbacks for $G=C_{p^{n}}$.

For $G=C_{p^n}$, we have a single subgroup $C_{p^k}$ for each $0\leq k\leq n$. Instead of the somewhat cumbersome $\phi_{C_{p^k}}^\ast$, we will write $\phi_k^\ast$.

\begin{prop}
If $\mI$ is the augmentation ideal of the Burnside Mackey functor for $C_{p^n}$, then we have an isomorphism of Mackey functors
\[
\mI=\bigoplus_{k=1}^{n}\phi_k^\ast \mZ^\ast,
\]
where $\mZ^\ast$ is the dual to the constant Mackey functor $\mZ$ on $C_{p^{n-k}}$.
\end{prop}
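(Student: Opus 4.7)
The plan is to exhibit an explicit Mackey functor map
\[
\alpha = \bigoplus_{k=1}^n \alpha_k : \bigoplus_{k=1}^n \phi_k^\ast \mZ^\ast \longrightarrow \mI,
\]
and check levelwise that it is an isomorphism. Two features of $G = C_{p^n}$ make this very tractable: the subgroup lattice is a chain $1 \subset C_p \subset \cdots \subset C_{p^n}$, and $G$ is abelian so every Weyl action is trivial. Consequently, a Mackey functor is determined by its values at the orbits $G/C_{p^j}$ together with a single adjacent restriction and transfer at each stage, and to specify a Mackey functor map it is enough to say where generators go and verify one adjacent restriction relation at each level.

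Unpacking the source, $\phi_k^\ast \mZ^\ast$ has value $\Z$ at $G/C_{p^j}$ for $j \geq k$ and $0$ for $j < k$; transfers between adjacent levels above $k$ are the identity, and restrictions are multiplication by $p$. The natural candidate for the image of the bottom generator is
\[
e_k := [C_{p^k}/C_{p^{k-1}}] - p\,[C_{p^k}/C_{p^k}] \in \mI(G/C_{p^k}).
\]
Using the orbit-restriction formula $\res\,[C_{p^k}/C_{p^i}] = p\,[C_{p^{k-1}}/C_{p^i}]$ for $i < k$, a direct check gives $\res_{C_{p^{k-1}}}^{C_{p^k}}(e_k) = 0$, which is precisely the vanishing forced by $\phi_k^\ast \mZ^\ast$ being zero below level $k$.

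I would then set $\alpha_k^{(k)}(1) = e_k$ and propagate by the transfers of $\phi_k^\ast \mZ^\ast$, obtaining
\[
\alpha_k^{(j)}(1) \;=\; \mathrm{Tr}_{C_{p^k}}^{C_{p^j}}(e_k) \;=\; [C_{p^j}/C_{p^{k-1}}] - p\,[C_{p^j}/C_{p^k}] \;=:\; x_k^{(j)}
\]
for $j \geq k$, with $\alpha_k^{(j)} = 0$ for $j < k$. The only remaining Mackey-compatibility check is the restriction identity $\res(x_k^{(j)}) = p\, x_k^{(j-1)}$ for $j > k$, which again drops straight out of the orbit-restriction formula.

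Finally I would show that the assembled map $\alpha$ is a levelwise isomorphism. Both sides have rank $j$ at $G/C_{p^j}$: the source contributes one $\Z$ for each $1 \le k \le j$, and $\mI(G/C_{p^j}) = \ker\!\big(A(C_{p^j}) \to \Z\big)$ has rank $j$ since $A(C_{p^j})$ is free of rank $j+1$ on its orbit basis. It then suffices to verify that $\{x_1^{(j)}, \ldots, x_j^{(j)}\}$ is a $\Z$-basis of $\mI(G/C_{p^j})$; adjoining $[C_{p^j}/C_{p^j}]$ produces a set whose change-of-basis matrix against $\{[C_{p^j}/C_{p^i}]\}_{i=0}^j$ is lower triangular with $1$'s on the diagonal, hence has determinant $1$. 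The main obstacle is not conceptual but bookkeeping: one has to run the restriction and transfer formulas in the Burnside ring carefully enough to confirm that these candidate elements truly glue into a Mackey functor map. What makes the calculation clean is exactly the two features of cyclic $p$-groups highlighted at the outset; for non-cyclic groups the analogous decomposition would be far more delicate.
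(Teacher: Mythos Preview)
Your argument is correct and follows essentially the same route as the paper: both identify the generator $e_k=[C_{p^k}/C_{p^{k-1}}]-p$ at the bottom nonzero level of $\phi_k^\ast\mZ^\ast$, compute its transfers to obtain $[C_{p^j}/C_{p^{k-1}}]-p[C_{p^j}/C_{p^k}]$, and then observe that these elements give a basis of $\mI(G/C_{p^j})$. Your write-up is in fact more careful about the bookkeeping---you explicitly verify the adjacent restriction identity matching the structure of $\mZ^\ast$ and give the triangular change-of-basis argument---whereas the paper leaves these as immediate.
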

\begin{proof}
Consider the subMackey functor generated by the element $[C_{p^{k+1}}/C_{p^k}]-p$ in $\mI(C_{p^n}/C_{p^{k+1}})$. The transfer of this element to $C_{p^m}$ is $[C_{p^m}/C_{p^k}]-p[C_{p^m}/C_{p^{k+1}}]$, and the collection of all elements generated by these elements for fixed $k$ is clearly a subMackey functor. This restricts to zero in $\mI(C_{p^n}/C_{p^k})$, and it is immediate that this subMackey functor is $\phi_k^\ast\mZ^\ast$. Moreover, for fixed $m$ and varying $k$, the elements $[C_{p^m}/C_{p^k}]-p[C_{p^m}/C_{p^{k+1}}]$ are linearly independent and generate $\mI(C_{p^n}/C_{p^m})$.
\end{proof}

\begin{cor}
We have an equivalence
\[
H\mI=\bigvee_{k=1}^n H\phi_k^\ast\mZ^\ast.
\]
\end{cor}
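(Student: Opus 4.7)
The plan is to apply the Eilenberg--Mac Lane functor $H$ to the Mackey functor decomposition in the preceding proposition and use that $H$ converts finite direct sums of Mackey functors into wedges of spectra. Thus, starting from $\mI=\bigoplus_{k=1}^n \phi_k^\ast\mZ^\ast$, I would assert
\[
H\mI \;\simeq\; H\Bigl(\bigoplus_{k=1}^n \phi_k^\ast\mZ^\ast\Bigr) \;\simeq\; \bigvee_{k=1}^n H\phi_k^\ast\mZ^\ast.
\]

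The only substantive point to verify is the second equivalence. The quickest justification is via homotopy Mackey functors: an Eilenberg--Mac Lane spectrum $H\mN$ is characterized up to equivalence as the $(-1)$-connected, $0$-coconnected spectrum with $\underline{\pi_0}=\mN$. The finite wedge $\bigvee_{k=1}^n H\phi_k^\ast\mZ^\ast$ is $(-1)$-connected and $0$-coconnected, and since $\underline{\pi_\ast}$ of a finite wedge is the direct sum of the $\underline{\pi_\ast}$'s of the summands, its only nonzero homotopy Mackey functor is $\underline{\pi_0}=\bigoplus_{k=1}^n\phi_k^\ast\mZ^\ast=\mI$. Hence it is equivalent to $H\mI$.

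Alternatively, one can argue more categorically: $H$ is a left adjoint (to $\underline{\pi_0}$ on $(-1)$-connected spectra) and so preserves colimits; equivalently, it is an exact functor from the abelian category of Mackey functors into spectra, and for finitely many summands direct sums agree with wedges in the stable category. Either way gives the identification at once.

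I don't anticipate any real obstacle: the corollary is a purely formal consequence of the algebraic splitting established in the proposition together with the standard additivity of the Eilenberg--Mac Lane construction. No properties of the pullback functor $\phi_k^\ast$ beyond what has already been recorded are needed here, since $\phi_k^\ast \mZ^\ast$ is just regarded as a Mackey functor on $C_{p^n}$ and the notation $H\phi_k^\ast\mZ^\ast$ denotes its associated Eilenberg--Mac Lane spectrum.
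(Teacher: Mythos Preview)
Your proposal is correct and matches the paper's approach: the paper states this corollary without proof, treating it as an immediate consequence of the algebraic splitting $\mI\cong\bigoplus_{k=1}^n\phi_k^\ast\mZ^\ast$ together with the standard fact that $H$ takes finite direct sums of Mackey functors to wedges of Eilenberg--Mac Lane spectra. Your justification via the characterization of $H\mN$ by its homotopy Mackey functors is exactly the right way to make this explicit.
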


We know also that $H\mI$ is in $\tau_{\geq 1}$, since it is the fiber of the map from $H\mA$ to its zero slice $H\mZ$. By Corollary~\ref{cor:PullBackZeroSlices}, $H\mI$ is a wedge of slices. These are therefore the slices of $H\mA$, and the filtration by slices of degree less than or equal to $m$ is the $m$-slice section.

What is perhaps most curious here is that all of the slice sections are Mackey functors associated to a natural filtration of the Burnside ring by summands. By considering degrees, we see the following.

\begin{thm}
If $n$ is not of the form $p^{k}-1$, then the map $P^{n}(H\mA)\to P^{n-1}(H\mA)$ is an equivalence. We also have an equivalence
\[
P^{p^{k}-1} H\mA=H\left(\mA\middle/\bigoplus_{j=k+1}^{n}\phi_k^{\ast}\mZ^{\ast}\right).
\]
\end{thm}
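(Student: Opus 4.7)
The plan is to guess the entire slice tower of $H\mA$ directly from the wedge decomposition $H\mI\simeq\bigvee_{k=1}^{n}H\phi_k^\ast\mZ^\ast$ just established, and then verify the guess against the universal property of the nullifications $P^m$. For each $0\le k\le n$, set
\[
\mA_k := \mA\Big/\bigoplus_{j=k+1}^{n}\phi_j^\ast\mZ^\ast,
\]
so that there is a short exact sequence of Mackey functors
\[
0\to \bigoplus_{j=1}^{k}\phi_j^\ast\mZ^\ast \to \mA_k \to \mZ \to 0
\]
and a corresponding cofiber sequence of Eilenberg-Mac Lane spectra
\[
\bigvee_{j=k+1}^{n} H\phi_j^\ast\mZ^\ast \to H\mA \to H\mA_k.
\]

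I then claim that, for every integer $m$ with $p^k-1\le m\le p^{k+1}-2$, this cofiber sequence exhibits $H\mA_k$ as $P^m(H\mA)$. Two conditions must be verified. First, the fiber $\bigvee_{j>k}H\phi_j^\ast\mZ^\ast$ must lie in $\tau_{\ge m+1}$; this is immediate from Corollary~\ref{cor:PullBackZeroSlices}, since each summand $H\phi_j^\ast\mZ^\ast$ with $j>k$ is a $(p^j-1)$-slice and $p^j-1\ge p^{k+1}-1\ge m+1$. Second, $H\mA_k$ itself must be $\le m$. Here one uses the extension $\bigvee_{j\le k}H\phi_j^\ast\mZ^\ast \to H\mA_k \to H\mZ$, in which every outer term is a slice of degree at most $p^k-1\le m$, hence is $\le m$; the full subcategory of $\le m$ spectra, being the category of objects local with respect to $\tau_{\ge m+1}$, is closed under extensions via the long exact sequence of $[X,-]_G$ for $X\in\tau_{\ge m+1}$.

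With both checks in hand, uniqueness of the nullification identifies $P^m(H\mA)\simeq H\mA_k$ whenever $p^k-1\le m\le p^{k+1}-2$. Specialising to $m=p^k-1$ yields the explicit formula $P^{p^k-1}(H\mA)\simeq H\mA_k$ of the theorem. For the first assertion, if $m$ is not of the form $p^{k'}-1$, then there is a unique $k$ with $p^k-1<m<p^{k+1}-1$, so both $m$ and $m-1$ lie in the interval $[p^k-1,p^{k+1}-2]$, and the transition map $P^m(H\mA)\to P^{m-1}(H\mA)$ is the identity of $H\mA_k$.

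The only mildly delicate point is the closure of the subcategory of $\le m$ spectra under extensions, which is a one-line consequence of the long exact sequence once one unwinds the definition of $\le m$ as being right-orthogonal to $\tau_{\ge m+1}$. Beyond that, the argument is pure bookkeeping: the genuinely nontrivial input, that each $H\phi_j^\ast\mZ^\ast$ is a slice of precisely degree $p^j-1$, has already been supplied by Corollary~\ref{cor:PullBackZeroSlices}, so I do not anticipate a real obstacle.
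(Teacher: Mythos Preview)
Your argument is correct and follows the same route as the paper: both recognize that the wedge decomposition $H\mI\simeq\bigvee_k H\phi_k^\ast\mZ^\ast$ together with Corollary~\ref{cor:PullBackZeroSlices} exhibits $H\mA$ as an iterated extension of slices in strictly increasing degrees $0,p-1,\dots,p^n-1$, and read off the tower from that. The only difference is presentational: the paper simply asserts ``these are therefore the slices of $H\mA$'' (implicitly invoking the principle, recorded later as \cite[Prop.~4.41]{HHR:Kervaire}, that a tower whose layers are slices of increasing degree is the slice tower), whereas you verify the defining property of $P^m$ by hand via the two orthogonality checks and closure of $\{\le m\}$ under extensions. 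Your version is more self-contained; the paper's is terser but leans on the cited recognition principle.
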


In particular, the slice spectral sequence for $H\mA$ is very simple. All groups are concentrated in dimension zero, and their filtrations are exponentially distributed. In fact, the groups lie at the very edge of the vanishing regions for various subgroups, which provides an interpretation for the vanishing lines in the slice spectral sequence as lines detecting when restrictions vanish.

Since every Mackey functor is a module over the Burnside ring, the naturality of the slice tower tells us that the corresponding filtration of an arbitrary Mackey functor $\mM$ is a good first approximation to the slice tower for $H\mM$. Each of the stages of the slice tower for $H\mA$ is the {\EM} spectrum for a Green functor (in fact, a Tambara functor), so the $\mA$-module structure of a Mackey functor $\mM$ shows that the slices of $H\mM$ are in the obvious module categories. We can do better, though, and explicitly determine all of the slices of $H\mM$ for a cyclic $p$-group.

\subsection{The slice of $H\mM$ for a cyclic $p$-group}
We begin by defining a filtration on a Mackey functor $\mM$. For obvious reasons, we will call it the coslice filtration.

\begin{defn}
If $\mM$ is a Mackey functor, we define a filtration of $\mM$ by saying that the $k$\textsuperscript{th} filtered piece $F^{k}\mM$ is the subMackey functor of $\mM$ generated by all elements which restrict to zero in all subgroups of order at most $k$.

We also let $F^{0}\mM$ be $\mM$.
\end{defn}

\begin{remark}
As stated, this definition obviously works for a general finite group $G$. We return to the question of how much of this section holds in that case in the last section.
\end{remark}

For a cyclic $p$-group, it is obvious that $F^{k}\mM=F^{k-1}\mM$ unless $k$ is a power of $p$. Similarly, by construction, $F^{p^{k}}\mM$ is pulled-back from $C_{p^{n}}/C_{p^{k}}$. We can do significantly better, though.

\begin{prop}\label{prop:FiltrationQuotients}
If $k\geq 1$, then the spectrum $H\big(F^{p^{k}-1}\mM/F^{p^{k}}\mM\big)$ is the pullback of a zero-slice for $C_{p^{n}}/C_{p^{k-1}}$.

If $k=0$, then the spectrum $H\big(\mM/F^{1}\mM\big)$ is a zero-slice.
\end{prop}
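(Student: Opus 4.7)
The plan is to explicitly identify each filtered piece $F^{p^s}\mM$ as a subMackey functor of $\mM$, and then read off the quotient orbit by orbit. For $G = C_{p^n}$ the subgroups of order at most $p^s$ are precisely $C_{p^j}$ with $j \leq s$, so the defining condition that a generator $x \in \mM(G/C_{p^m})$ restrict to zero on all of them collapses, by composition of restrictions, to the single equation $\res^{C_{p^m}}_{C_{p^{\min(s,m)}}}(x) = 0$. This forces $x = 0$ when $m \leq s$ and otherwise is the condition $\res^{C_{p^m}}_{C_{p^s}}(x) = 0$. I would then prove
\[
F^{p^s}\mM(G/C_{p^j}) = \begin{cases} 0 & j \leq s, \\ \ker\bigl(\res^{C_{p^j}}_{C_{p^s}}\bigr) & j > s.\end{cases}
\]
The inclusion $\supseteq$ for $j > s$ is immediate from the definition. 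For the reverse inclusion and the vanishing at $j \leq s$, one checks that these prescribed values already form a subMackey functor: closure under restriction is automatic by composition, and closure under transfer follows from the abelian double coset formula, which makes $\res^{C_{p^j}}_{C_{p^s}}\!\circ\mathrm{tr}^{C_{p^j}}_{C_{p^i}}$ equal to an integer multiple of a single transfer-restriction composite that vanishes on the relevant kernels.

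For the case $k = 0$, applying the description with $s = 0$ gives $F^1\mM(G/H) = \ker(\res^H_{\{e\}})$, so $(\mM/F^1\mM)(G/H)$ is canonically the image of $\res^H_{\{e\}}$ inside $\mM(G/\{e\})$. The induced restrictions in the quotient are then inclusions of subgroups of the common target $\mM(G/\{e\})$, so they are injective, and Theorem~\ref{thm:zeroslices} realises $H(\mM/F^1\mM)$ as a zero-slice. For $k \geq 1$, using that $F^{p^k-1}\mM = F^{p^{k-1}}\mM$ on cyclic $p$-groups, the quotient $\mN := F^{p^{k-1}}\mM/F^{p^k}\mM$ vanishes on $G/C_{p^j}$ for $j \leq k-1$; by Proposition~\ref{prop:PullBackMackey} this identifies $\mN$ as a pullback from the appropriate subquotient of $G$. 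On the remaining orbits one has $\mN(G/C_{p^j}) = \ker(\res^{C_{p^j}}_{C_{p^{k-1}}})/\ker(\res^{C_{p^j}}_{C_{p^k}})$, and the restriction $\res^{C_{p^j}}_{C_{p^k}}$ sends this injectively into $\ker(\res^{C_{p^k}}_{C_{p^{k-1}}})$, since its kernel on $\ker(\res^{C_{p^j}}_{C_{p^{k-1}}})$ is exactly $\ker(\res^{C_{p^j}}_{C_{p^k}})$. Combined with the argument applied downstairs, this shows the underlying Mackey functor on the subquotient meets the injectivity hypothesis of Theorem~\ref{thm:zeroslices}, so $H\mN$ is the pullback of a zero-slice as asserted.

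The main obstacle is the precise description of $F^{p^s}\mM$ in the first step: the generators are defined by a restriction condition, but the subMackey functor they generate must be computed by closing under all transfers and restrictions, and a priori one might fear that transfers of higher generators produce unexpected elements at low orbits. The abelian double coset formula is the key technical tool that prevents this, collapsing the usual sum over double cosets into a scalar multiple of a single transfer-restriction composite and thereby keeping us inside the desired kernels. A secondary subtlety is verifying that the Mackey structure on $\mN$ inherited from $\mM$ agrees with the pulled-back structure on the subquotient group, but this is forced once the vanishing on low orbits is in place.
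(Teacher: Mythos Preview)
Your proof is correct and a bit more explicit than the paper's. You compute each $F^{p^{s}}\mM$ orbit by orbit, pinning down $F^{p^{s}}\mM(G/C_{p^{j}})=\ker\bigl(\res^{C_{p^{j}}}_{C_{p^{s}}}\bigr)$ via the abelian double coset formula, and then verify the injectivity criterion of Theorem~\ref{thm:zeroslices} directly on the quotient. The paper instead argues recursively: having noted that $F^{p^{k}-1}\mM$ is pulled back from $C_{p^{n}}/C_{p^{k}}$ as some $\mN$, it observes that $F^{p^{k}}\mM$ is then the pullback of $F^{1}\mN$, so the quotient is the pullback of $\mN/F^{1}\mN$, and the already-established $k=0$ case applied to $\mN$ on the quotient group finishes. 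Your route yields a concrete formula for the filtration that the paper never writes down; the paper's route is shorter and sidesteps the double coset check altogether by reducing to the base case.

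Two small remarks. First, both your argument and the paper's proof produce $C_{p^{n}}/C_{p^{k}}$, not the $C_{p^{n}}/C_{p^{k-1}}$ appearing in the statement; this is a typo in the proposition, as the subsequent corollary (that the quotient is a $(p^{k}-1)$-slice, via Corollary~\ref{cor:PullBackZeroSlices} with $|N|=p^{k}$) confirms. Second, your citation of Proposition~\ref{prop:PullBackMackey} is slightly off: what you need at that point is that a Mackey functor vanishing on $G/C_{p^{j}}$ for $j\leq k-1$ lies in the image of $\phi_{k}^{\ast}$, which is immediate from the orbit-level description of $\phi_{N}^{\ast}$ given just before that proposition; Proposition~\ref{prop:PullBackMackey} itself is the compatibility $\phi_{N}^{\ast}H\mM\simeq H\phi_{N}^{\ast}\mM$, which you also use but as a separate step.
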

\begin{proof}
We prove the second part first, as the first follows easily from this.

By Theorem~\ref{thm:zeroslices}, zero slices are distinguished by the fact that all restriction maps to the trivial group are injections. The subMackey functor $F^{1}\mM$ is the subMackey functor of all elements which restrict to zero in the trivial group, and so the quotient $\mM/F^{1}\mM$ is a zero slice.

For the first part, we note that $F^{p^{k}-1}\mM$ is pulled-back from $C_{p^{n}}/C_{p^{k}}$. In particular, it is the pullback of $F^{0}\mN$ for some $\mN$ on $C_{p^{n}}/C_{p^{k}}$ Similarly, $F^{p^{k}}\mM$ is the pullback of $F^{1}\mN$. The quotient is therefore the pullback of $\mN/F^{1}\mN$, which is the pullback of a zero slice.
\end{proof}

\begin{cor}\label{cor:EMLayers}
For any $\mM$ over $C_{p^{n}}$, $H\big(F^{p^{k}-1}\mM/F^{p^{k}}\mM\big)$ is a $(p^{k}-1)$-slice.
\end{cor}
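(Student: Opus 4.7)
My plan is to derive this corollary immediately from Proposition~\ref{prop:FiltrationQuotients} together with Corollary~\ref{cor:PullBackZeroSlices}, treating the two cases $k=0$ and $k \geq 1$ separately as in Proposition~\ref{prop:FiltrationQuotients}.

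For the base case $k=0$, Proposition~\ref{prop:FiltrationQuotients} directly asserts that $H(\mM/F^1\mM)$ is a zero-slice, and since $p^0 - 1 = 0$, this is already the required $(p^0-1)$-slice.

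For $k \geq 1$, Proposition~\ref{prop:FiltrationQuotients} identifies $H(F^{p^k-1}\mM/F^{p^k}\mM)$ with the pullback $\phi_k^{\ast}(H\mN)$, where $H\mN$ is a zero-slice on the quotient group $C_{p^n}/C_{p^k}$ (the proof of Proposition~\ref{prop:FiltrationQuotients} produces $\mN$ as the $F^1$-quotient of an auxiliary Mackey functor on $C_{p^n}/C_{p^k}$, which is a zero-slice by the $k=0$ case). Corollary~\ref{cor:PullBackZeroSlices}, applied with $N = C_{p^k} \lhd G = C_{p^n}$, then converts this pullback into a $(|N|-1) = (p^k - 1)$-slice for $G$, which is exactly the claim.

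Since both inputs are already in hand, there is no substantive obstacle; the corollary amounts to an indexing check. The one point worth being careful about is verifying that the normal subgroup through which we pull back has order $p^k$ (and not $p^{k-1}$), since this is what produces the correct slice degree $p^k - 1$. This matches the observation that any generator of $F^{p^k-1}\mM$ must restrict to zero on every subgroup of order $\leq p^{k-1}$, so $F^{p^k-1}\mM$ vanishes on every orbit $G/H$ with $|H| \leq p^{k-1}$, and hence is supported exactly on orbits $G/H$ with $C_{p^k} \subseteq H$.
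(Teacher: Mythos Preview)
Your argument is correct and is precisely the intended one: the corollary is an immediate combination of Proposition~\ref{prop:FiltrationQuotients} and Corollary~\ref{cor:PullBackZeroSlices}, with the $k=0$ case handled directly. Your care about the indexing is well placed---the proof of Proposition~\ref{prop:FiltrationQuotients} shows that $F^{p^{k}-1}\mM$ is pulled back along $C_{p^{k}}$ (not $C_{p^{k-1}}$, despite what the statement of that proposition literally says), so that $|N|-1=p^{k}-1$ as required.
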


\begin{thm}
For $G=C_{p^{n}}$, then $r$\textsuperscript{th} slice of $H\mM$ is
\[
P_{r}^{r}(H\mM)=H\big(F^{r}\mM/F^{r+1}\mM\big),
\]
and the slice sections are
\[
P^{r}(H\mM)=H\big(\mM/F^{r+1}\mM\big).
\]
\end{thm}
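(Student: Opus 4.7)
The plan is to establish both statements in tandem by identifying $H(\mM/F^{r+1}\mM)$ with $P^r(H\mM)$. Applying $H$ to the short exact sequence $0 \to F^{r+1}\mM \to \mM \to \mM/F^{r+1}\mM \to 0$ of Mackey functors produces a cofiber sequence
\[
H F^{r+1}\mM \to H\mM \to H(\mM/F^{r+1}\mM).
\]
To recognize this as the slice-section cofiber sequence for $P^r$, it suffices to verify (i) $H(\mM/F^{r+1}\mM)$ is less than or equal to $r$, and (ii) $H F^{r+1}\mM$ lies in $\tau_{\geq r+1}$. Granting these, the slice formula $P_r^r(H\mM) \simeq H(F^r\mM/F^{r+1}\mM)$ falls out of the short exact sequence $0 \to F^r\mM/F^{r+1}\mM \to \mM/F^{r+1}\mM \to \mM/F^r\mM \to 0$ by comparison with the defining fiber sequence $P_r^r(H\mM) \to P^r(H\mM) \to P^{r-1}(H\mM)$.

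The input for both (i) and (ii) is the finite filtration $\mM = F^0\mM \supseteq F^1\mM \supseteq F^2\mM \supseteq \cdots \supseteq 0$, which stabilizes between consecutive powers of $p$ and terminates by level $p^n$. Thus $F^j\mM = F^{j+1}\mM$ unless $j=0$ or $j = p^k - 1$ for some $1 \leq k \leq n$, and in these remaining cases Corollary~\ref{cor:EMLayers} identifies $H(F^j\mM/F^{j+1}\mM)$ as a $j$-slice.

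For (i), truncating the filtration above level $r+1$ gives a finite composition series for $\mM/F^{r+1}\mM$ with successive layers $F^j\mM/F^{j+1}\mM$ for $0 \leq j \leq r$; each corresponding $H$-spectrum is either contractible or a $j$-slice, and in either case is less than or equal to $j$, hence to $r$. Since the full subcategory of $P^r$-local spectra is closed under extensions (an immediate consequence of the long exact sequence in $[Z, -]_G$ for $Z \in \tau_{\geq r+1}$), induction up the composition series yields (i). For (ii), the tail $F^{r+1}\mM \supseteq F^{r+2}\mM \supseteq \cdots \supseteq 0$ is likewise a finite composition series whose layers, after applying $H$, are either contractible or $j$-slices with $j \geq r+1$, hence in $\tau_{\geq j} \subseteq \tau_{\geq r+1}$; closure of $\tau_{\geq r+1}$ under extensions then gives $H F^{r+1}\mM \in \tau_{\geq r+1}$.

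The main obstacle is almost entirely bookkeeping around Corollary~\ref{cor:EMLayers}: one must verify that the filtration jumps only at the prescribed indices $0, p-1, p^2-1, \ldots, p^n-1$ and that the slice degree assigned to each nonzero layer really is the filtration index $j$. Granting this, the argument reduces to routine closure properties of $\tau_{\geq r+1}$ and of the $P^r$-local subcategory under extensions, together with the exactness of $H$ on short exact sequences of Mackey functors.
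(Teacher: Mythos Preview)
Your proposal is correct and follows essentially the same approach as the paper. Both arguments build the tower $H(\mM/F^{r+1}\mM)$ and use Corollary~\ref{cor:EMLayers} to identify the layers $H(F^{j}\mM/F^{j+1}\mM)$ as $j$-slices; the only difference is packaging. The paper simply observes that a tower whose fibers are slices of strictly increasing degree must be the slice tower (invoking \cite[Proposition~4.41]{HHR:Kervaire}), whereas you unpack that principle by directly checking the two defining conditions $H F^{r+1}\mM\in\tau_{\geq r+1}$ and $H(\mM/F^{r+1}\mM)\leq r$ via extension-closure. Your version is more self-contained; the paper's is terser but relies on the cited black box.
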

\begin{proof}
We rewrite the coslice filtration of $\mM$ as a tower with limit $\mM$: $P^{r}\mM=\mM/F^{r+1}\mM$. We have obvious short exact sequences 
\[
F^{r}\mM/F^{r+1}\mM\to \mM/F^{r+1}\mM\to \mM/F^{r}\mM.
\]
Applying $H(-)$ to this tower of Mackey functors produces a tower of fibrations in spectra. Since our group is a cyclic $p$-group, we know that these only change when $(r+1)$ is a power of $p$. By Corollary~\ref{cor:EMLayers}, the fibers in the tower are all slices, and by construction, they are arranged in increasing order. We therefore conclude that this tower of {\EM} spectra is the slice tower.
\end{proof}

Now we give a small warning (which is in some sense obvious). While it is the case that the Mackey functors $\mM/F^{r}\mM$ are modules over $\mA/F^{r}\mA$ (and this is true for any finite group, in fact), in general
\[
\mM/F^{r}\mM\not\cong \mM\boxprod \mA/F^{r}\mA.
\]
There is always a map from the right to the left, but even for zero slices, it can fail to be an isomorphism.

\subsection{An example}
To facilitate understanding of the coslice filtration on a Mackey functor, we get our hands dirty with an example. Let $G=C_{2}$, and we use the standard notation for a Mackey functor:
\[
\xymatrix@!R=1pt{
{} & {\mM(G/G)}\ar@(l,l)[dd]_{r} \\ 
{\mM=} & {} \\
{} & {\mM(G/\{e\})} \ar@(r,r)[uu]_{t} \ar@(dl,dr)[]_{\gamma}}
\]
where $r$ is the restriction, $t$ is the transfer, and $\gamma$ generates the Weyl group.

Our example will be the Mackey functor $\mE$ defined by
\[
\xymatrix@!R=1pt{
{} & {} & {\Z\oplus\Z/2}\ar@(l,l)[dd]_{\begin{bmatrix} 1 & 0 \end{bmatrix}} \\
{\mE=} & {} & {} \\
{} & {} & {\Z/2}\ar@(r,r)[uu]_{\begin{bmatrix} 0 \\ 1 \end{bmatrix}} \ar@(dl,dr)[]_{1}}
\]

By definition, $F^{0}\mE=\mE$. Similarly, $F^{1}\mE$ is the subMackey functor generated by all elements which restrict to zero in $\mE(G/\{e\})$. This gives the following Mackey functors for $F^{1}\mE$ and the quotient $\mE/F^{1}\mE$:
\[
\xymatrix@!R=1pt{
{} & {2\Z\oplus \Z/2} \ar@(l,l)[dd]_{0} \\
{F^{1}\mE=} & {} \\
{} & {0} \ar@(r,r)[uu]_{0} \ar@(dl,dr)[]_{0}
} 
\quad
\xymatrix@!R=1pt{
{} & {\Z/2}\ar@(l,l)[dd]_{1} \\
{\mE/F^{1}\mE=} & {} \\
{} & {\Z/2}\ar@(r,r)[uu]_{0} \ar@(dl,dr)[]_{1}
}.
\]
The slice tower is in this case simply the fiber sequence $HF^{1}\mE\to H\mE\to H(\mE/F^{1}\mE)$.

The module $\mE$ also fails to satisfy $\mE/F^{1}\mE\cong\mE\boxprod \mZ$. In this case, the latter is
\[
\xymatrix@!R=1pt{
{} & {\Z/4}\ar@(l,l)[dd]_{1} \\
{\mE\boxprod\mZ=} & {} \\
{} & {\Z/2}\ar@(r,r)[uu]_{2} \ar@(dl,dr)[]_{1}
}
\]
and the map to the zero slice of $\mE$ is the obvious quotient map.

\section{Geometric spectra}\label{sec:Geometric}

The notion of pullback has already proved useful. We focus now on a special case, ``geometric spectra'', for which the slice tower is a reindexed form of the Postnikov tower. These are actually distinguished by an algebraic condition on their Mackey functor homotopy groups.

\begin{defn}
A Mackey functor $\mM$ is concentrated on $G/G$ if
\[
\mM(G/H)=0
\]
for all proper subgroups $H$.
\end{defn}
Thus $\mM$ is concentrated on $G/G$ if $\mM=\phi_G^\ast M$ for some $M$ an abelian group (which is a Mackey functor for the trivial group). Thus much of this section simply exploits the fact that Conjecture~\ref{conj:PullBackSlices} is true in this context, and we work out many of the implications of this.

We have already encountered a number of examples.
\begin{example}\mbox{}
\begin{enumerate}
\item If $G=C_p$, then the augmentation ideal $\mI$ is concentrated on $G/G$.
\item For any $G$, $\underline{\pi_0}F(S^{\rho_{G}-1},H\mM)=\underline{H}^0(S^{\rho_G-1};\mM)$ is concentrated on $G/G$ for any Mackey functor $\mM$.
\end{enumerate}
\end{example}

These examples give some of the general algebraic flavor.
\begin{prop}\label{prop:EPmM}
Every Mackey functor $\mM$ has a largest canonical quotient $\EP\otimes\mM$ such that $\EP\otimes\mM$ is geometric. If $\mM$ is a Green functor, then so is $\EP\otimes\mM$.
\end{prop}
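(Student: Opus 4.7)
The plan is to construct the quotient explicitly at the level of Mackey functors and then identify it with $\underline{\pi}_0(\EP\wedge H\mM)$ to obtain the Green functor statement for free.

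First, I would define a sub-Mackey functor $\mK\subseteq\mM$ as follows: set $\mK(G/H)=\mM(G/H)$ for every proper subgroup $H\subsetneq G$, and set
\[
\mK(G/G)=\sum_{H\subsetneq G}\mathrm{tr}_H^G\bigl(\mM(G/H)\bigr).
\]
The only non-trivial point in checking that $\mK$ is closed under the Mackey structure is closure under transfer into $G/G$, which holds by construction, and closure under the Weyl action on $\mM(G/G)$, which follows from the double coset formula identity $g\cdot\mathrm{tr}_H^G=\mathrm{tr}_{gHg^{-1}}^G\cdot c_g$. Restrictions and transfers between proper subgroups automatically land in $\mK$ since $\mK$ agrees with $\mM$ there.

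Next, I would define $\EP\otimes\mM:=\mM/\mK$ and verify two properties. (i) It is geometric: by construction its value on $G/H$ is zero for every proper $H$. (ii) It is the largest geometric quotient: if $\mM\twoheadrightarrow\mN$ has $\mN(G/H)=0$ for all proper $H$, then the kernel contains $\mM(G/H)$ for proper $H$, and since $\mN(G/G)$ has zero source for every transfer from a proper subgroup, the kernel also contains every such transfer, hence contains $\mK$. This gives both existence and the universal property.

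For the Green functor assertion, the cleanest route is via spectra. By Proposition~\ref{prop:PullBackGeomFP} (with $N=G$) and the cofiber sequence $E\mathcal P_+\to S^0\to\EP$, one checks that $\underline{\pi}_0(\EP\wedge H\mM)=\mM/\mK$ as Mackey functors; indeed $\EP\wedge H\mM$ restricts to a contractible spectrum on every proper subgroup, and the image of the connecting map $\underline{\pi}_0(E\mathcal P_+\wedge H\mM)\to\mM$ on $G/G$ is exactly the span of transfers from proper subgroups. Since $\EP$ is a commutative idempotent in the equivariant stable category, smashing with $\EP$ is lax symmetric monoidal, so $\EP\wedge H\mM$ is a ring spectrum whenever $H\mM$ is, and $\underline{\pi}_0$ inherits a Green structure. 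Alternatively, one verifies algebraically that $\mK\subset\mM$ is a Green-functor ideal: for proper $H$ this is automatic, and at $G/G$ Frobenius reciprocity
\[
\mathrm{tr}_H^G(x)\cdot y=\mathrm{tr}_H^G\bigl(x\cdot\mathrm{res}_H^G(y)\bigr)
\]
shows that products of generators of $\mK(G/G)$ with arbitrary elements of $\mM(G/G)$ stay in $\mK(G/G)$.

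The only place requiring care is the verification that $\mK$ is closed under the Weyl action and that the universal property really picks out the quotient by all transfers from \emph{all} proper subgroups simultaneously; the Green-functor step is essentially a one-line application of Frobenius reciprocity once the ideal generators have been identified.
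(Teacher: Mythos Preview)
Your argument is correct and follows the same line as the paper: quotient $\mM$ by the sub-Mackey functor generated by the values $\mM(G/H)$ for proper $H$, and invoke Frobenius reciprocity to see that this is a Mackey ideal when $\mM$ is Green. You have simply made explicit what the paper leaves implicit---the levelwise description of $\mK$, the universal property, and the Frobenius computation---and you add a topological alternative via $\underline{\pi}_0(\EP\wedge H\mM)$, which the paper mentions only afterwards as a remark rather than as part of the proof. One minor quibble: the Weyl group $W_G(G)$ is trivial, so there is nothing to check for the action on $\mK(G/G)$; your double-coset identity is correct but unnecessary there.
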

\begin{proof}
To form $\EP\otimes\mM$, we quotient $\mM$ by the sub-Mackey functor generated by $\mM(G/H)$ for all proper subgroups $H$. If $\mM$ is a Green functor, then by definition, this is a Mackey ideal, and hence $\EP\otimes\mM$ is a Green functor. 
\end{proof}

\begin{remark}
If $\mM(G/G)$ is a division ring and $\mM$ is geometric, then $\mM$ is a Mackey division ring. These have been studied extensively by Lewis and Oruc \cite{Lewis:Green}, \cite{Oruc}. We believe that pull-back and the fixed point Mackey functors associated to Galois field extensions exhaust all Mackey fields.
\end{remark}

We have chosen this notation to underscore the close connection between Mackey functors concentrated on $G/G$ and geometric fixed points. This connection will be fleshed out further shortly.

\begin{remark}
The previous proposition is really a statement about pulled-back Mackey functors. There is an identical proof which shows that every $\mM$ has a canonical largest quotient $\tilde{E}\mathcal F[N]\otimes \mM$ which is pulled back from $G/N$. Additionally, this is a Green functor if $\mM$ is. This is also the localization onto the full-subcategory of Mackey functors for which the pullback is the left adjoint, as described in \cite[Proposition 4]{GreMay92}.
\end{remark}

The second example above shows us the following proposition.
\begin{prop}
Every $\mM$ has a canonical maximal subMackey functor $P\mM$ such that $P\mM$ is concentrated on $G/G$.
\end{prop}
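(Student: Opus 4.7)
The plan is to exhibit $P\mM$ explicitly as the common kernel of all proper restrictions, dual to the quotient construction of $\EP\otimes\mM$ in Proposition~\ref{prop:EPmM}. First, I would define a sub-object of $\mM$ by setting
\[
P\mM(G/G) = \bigcap_{K \subsetneq G}\ker\bigl(\res_K^G \colon \mM(G/G) \to \mM(G/K)\bigr),
\]
and $P\mM(G/H) = 0$ for every proper subgroup $H$. By transitivity of restriction, namely $\res_K^G = \res_K^H \circ \res_H^G$ whenever $K \subseteq H$, it suffices to intersect over maximal proper subgroups.

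The first step is to check that this assembles into a sub-Mackey functor of $\mM$. Since $P\mM(G/H)$ vanishes for every proper $H$, the only nontrivial structure maps are those emanating from or landing in $G/G$. The restriction $\res_H^G$ sends $P\mM(G/G)$ into $0 = P\mM(G/H)$ by the defining condition; transfers from proper subgroups have vanishing source in $P\mM$, so they automatically factor through the inclusion; restrictions and transfers between proper subgroups are $0 \to 0$; and the Weyl action at $G/G$ is trivial. All Mackey double-coset relations are inherited from $\mM$ because the relevant composites vanish on $P\mM$ away from $G/G$.

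The second step is maximality. Given any sub-Mackey functor $\mN \subseteq \mM$ with $\mN(G/H) = 0$ for all $H \subsetneq G$, compatibility of the inclusion with restrictions forces every $x \in \mN(G/G)$ to satisfy $\res_H^G(x) = 0$ in $\mM(G/H)$ for each proper $H$; hence $\mN(G/G) \subseteq P\mM(G/G)$, so $\mN \subseteq P\mM$.

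There is no real obstacle here — the construction is essentially tautological once the right formula is written down, in contrast with Proposition~\ref{prop:EPmM}, where the ``sub-Mackey functor generated by'' language hides genuine content in the generating process. Functoriality of $P$ is automatic: any morphism $\mM \to \mM'$ sends elements that restrict to zero in every proper subgroup to elements with the same property, so it restricts to a map $P\mM \to P\mM'$.
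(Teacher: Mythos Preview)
Your proof is correct and complete. The paper itself does not give a proof of this proposition; it simply says ``the second example above shows us the following proposition,'' pointing to the fact that $\underline{H}^0(S^{\rho_G-1};\mM)$ is concentrated on $G/G$. Your formula for $P\mM(G/G)$ is exactly the group $\{x\in\mM(G/G)\mid \res_H^G(x)=0\ \forall H\subsetneq G\}$ that the paper identified with $[S^{\rho_G-1},H\mM]_G$ in the proof of Theorem~\ref{thm:zeroslices}, so the two constructions coincide. The difference is only in presentation: the paper gestures at a topological model and leaves the reader to extract the sub-Mackey structure and maximality, while you give a self-contained algebraic argument verifying both. Your approach has the minor advantage of not appealing to any homotopy-theoretic input at all.
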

This reinforces the close connection between Mackey functors concentrated on $G/G$ and the slice filtration (since $P\mM$ is the top slice of $H\mM$, $P_{|G|-1}^{|G|-1}H\mM$).

We now turn more fully to topological considerations, directly linking this algebraic discussion to slice and geometric fixed point constructions. We have two essentially equivalent consequences of the definition: if $\mM$ is concentrated on $G/G$, then
\begin{enumerate}
\item $G/H_{+}\wedge H\mM\simeq\ast$ and
\item $F(G/H_+,H\mM)\simeq \ast$
\end{enumerate}
for all proper subgroups $H$. 

Since the cofiber of the inclusion $S^{V^{G}}\hookrightarrow S^{V}$ is built out of cells with proper stabilizer subgroup, the previous observation shows that
\[
S^{V^{G}}\wedge H\mM\simeq S^{V}\wedge H\mM,
\]
and the equivalence is induced by the inclusion of the fixed point sphere. Coupled with the stability of slices under suspension by copies of the regular representation, this gives a huge list of slices.

\begin{thm}\label{thm:Slices}
If $\mM$ is concentrated on $G/G$, then $\Sigma^{n-1}H\mM$ is a $(n|G|-1)$-slice.
\end{thm}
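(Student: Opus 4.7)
The plan is to combine two ingredients already in hand: the key observation made immediately before the theorem that for $\mM$ concentrated on $G/G$ the inclusion $S^{V^G}\hookrightarrow S^V$ becomes an equivalence after smashing with $H\mM$, and the suspension invariance of slices (Theorem~\ref{thm:suspensions}). Suspension invariance tells us that slice-degree is additive under $\rho_G$-suspension, so once we rewrite $\Sigma^{n-1}H\mM$ as $\Sigma^{n\rho_G}$ of something already known to be a slice, the conclusion is automatic.

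First I would rewrite the spectrum. Applying the observation $S^V\wedge H\mM\simeq S^{V^G}\wedge H\mM$ to $V=n\rho_G$ (whose $G$-fixed points are $\mathbb R^n$) and then desuspending once by the trivial sphere gives a natural equivalence
\[
\Sigma^{n-1}H\mM \;=\; S^{n-1}\wedge H\mM \;\simeq\; S^{n\rho_G-1}\wedge H\mM \;=\; \Sigma^{n\rho_G}\bigl(\Sigma^{-1}H\mM\bigr).
\]
(For $n<0$ the same identity follows by first doing this for some large positive $n+k$ and then formally desuspending by $k\rho_G$; this is harmless because the equivalence is induced by a map of actual spectra.)

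Next I would invoke the corollary that $\Sigma^{-1}H\mN$ is a $(-1)$-slice for every Mackey functor $\mN$; in particular $\Sigma^{-1}H\mM$ is a $(-1)$-slice. Then Theorem~\ref{thm:suspensions}, applied to $X=\Sigma^{-1}H\mM$ with $k=n$, yields $P^{-1+n|G|}\bigl(\Sigma^{n\rho_G}X\bigr)=\Sigma^{n\rho_G}P^{-1}(X)=\Sigma^{n\rho_G}X$ and $P_{-1}^{-1}(X)=X$, so $\Sigma^{n\rho_G}X$ is an $(n|G|-1)$-slice. Combining with the equivalence above, $\Sigma^{n-1}H\mM$ is an $(n|G|-1)$-slice, as claimed.

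The only point that needs care is the first equivalence: one should verify that the cofiber of $S^{(n\rho_G)^G}\hookrightarrow S^{n\rho_G}$ is built from cells of the form $G/H_+\wedge S^k$ with $H\subsetneq G$, so that smashing with $H\mM$ (which kills $G/H_+$ for every proper $H$ because $\mM$ is concentrated on $G/G$) makes this cofiber contractible. This is exactly the content of the paragraph preceding the theorem, so no new work is needed; the rest of the argument is bookkeeping via suspension invariance.
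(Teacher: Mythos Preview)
Your proof is correct and is essentially the paper's own argument: reduce to the fact that $\Sigma^{-1}H\mN$ is a $(-1)$-slice for any $\mN$, use suspension invariance (Theorem~\ref{thm:suspensions}) to deduce that $\Sigma^{n\rho_G-1}H\mN$ is an $(n|G|-1)$-slice, and then invoke the equivalence $S^{n\rho_G-1}\wedge H\mM\simeq S^{n-1}\wedge H\mM$ coming from $\mM$ being concentrated on $G/G$. Your handling of $n<0$ via a shift by $k\rho_G$ is a fine way to make explicit a point the paper leaves implicit.
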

\begin{proof}
For any Mackey functor $\mN$, $\Sigma^{-1}H\mN$ is a $(-1)$-slice. We therefore learn that
$\Sigma^{n\rho_{G}-1}H\mN$ is a $(n|G|-1)$-slice, and if $\mM$ is concentrated on $G/G$, then 
\[
\Sigma^{n\rho_{G}-1}H\mM\simeq\Sigma^{n-1}H\mM.\qedhere
\]
\end{proof}

\begin{remark}
It is not the case that a Mackey functor concentrated on a proper subgroup has this property. For $G=C_{2}$, the fixed point Mackey functor associated to the sign representation is concentrated on $G/\{e\}$, but it is easy to produce suspensions which are not slices.
\end{remark}

\begin{remark}
The analogous result for pullbacks is that if $\mM$ is a zero slice for $G/N$, then $\Sigma^{k\rho_{G/N}}H\phi_{N}^{\ast}\mM$ is an $(k|G|+|N|-1)$-slice. Similarly, for any Mackey functor $\mM$, $\Sigma^{k\rho_{G/N}-1}H\phi_{N}^{\ast}\mM$ is a $(k|G|-1)$-slice. The proof is immediate from Corollary~\ref{cor:SuspensionsPullback}.
\end{remark}

This property that the function spectrum from $G/H_{+}$ to $H\mM$ is contractible for any $\mM$ concentrated on $G/G$ is shared by $\EP$, where $\cP$ is the family of proper subgroups. As this spectrum is that with which we smash to get the geometric fixed points, we give a topological version of our earlier algebraic statement.

\begin{defn}
A $G$-spectrum $X$ is geometric if $\underline{\pi_\ast}(X)$ is concentrated on $G/G$.
\end{defn}

Smashing with $\EP$ effects the localization nullifying all maps from induced objects, so we derive the following equivalent form immediately.

\begin{prop}
A spectrum is geometric if and only if the natural map
\[
X\to\EP\wedge X
\]
is a $G$ weak equivalence.
\end{prop}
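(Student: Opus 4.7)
The plan is to exploit the cofiber sequence
\[
E\cP_+ \to S^0 \to \EP,
\]
on which $\EP$ is built. Smashing with $X$ yields
\[
E\cP_+ \wedge X \to X \to \EP\wedge X,
\]
so the natural map $X \to \EP \wedge X$ is a weak equivalence if and only if $E\cP_+ \wedge X$ is equivariantly contractible. The proposition thus reduces to the equivalence between ``$X$ is geometric'' and ``$E\cP_+ \wedge X$ is contractible''.

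For the forward direction, I would first translate ``$X$ is geometric'' into the statement that $i_H^\ast X \simeq \ast$ for every proper subgroup $H$. Indeed, for $K \subseteq H$ one has $\underline{\pi_n}(i_H^\ast X)(H/K) = \underline{\pi_n}(X)(G/K)$, and by hypothesis these vanish whenever $K$ is proper in $G$. Now $E\cP_+$ is a $G$-CW spectrum whose cells are all induced from proper subgroups, so $E\cP_+ \wedge X$ is built by cofiber sequences and sequential colimits from pieces of the form $(G_+ \wedge_H S^k) \wedge X \simeq G_+ \wedge_H (S^k \wedge i_H^\ast X)$, each of which is contractible. Hence $E\cP_+ \wedge X \simeq \ast$.

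For the reverse direction, I would use that $i_H^\ast E\cP_+$ is $H$-equivalent to $S^0$ for every proper $H$: each fixed-point space $(E\cP)^K$ with $K \subseteq H \in \cP$ is contractible, so the collapse map $i_H^\ast E\cP_+ \to S^0$ is an $H$-equivalence. Therefore
\[
G/H_+ \wedge X \;\simeq\; G_+ \wedge_H i_H^\ast X \;\simeq\; G_+ \wedge_H (i_H^\ast E\cP_+ \wedge i_H^\ast X) \;\simeq\; G/H_+ \wedge E\cP_+ \wedge X,
\]
which is contractible by hypothesis. Taking equivariant homotopy groups gives $\underline{\pi_n}(X)(G/H) = 0$ for each proper $H$, so $X$ is geometric. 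The step that warrants the most care is the cellular colimit argument in the forward direction, where one needs the standard closure properties of weak equivalences of $G$-spectra under wedges, cofibers, and sequential colimits along $G$-CW filtrations; everything else is formal from the defining cofiber sequence of $\EP$ and the identification of fixed points of $E\cP$.
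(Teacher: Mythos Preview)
Your argument is correct and is the standard unpacking of what the paper treats as immediate: the paper simply invokes that smashing with $\EP$ is the localization nullifying all induced objects, while you verify this explicitly via the cofiber sequence $E\cP_+\to S^0\to\EP$ and the cell structure of $E\cP$. There is no substantive difference in strategy, only in level of detail.
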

This gives the reason for calling such spectra ``geometric'': the fixed and geometric fixed points agree. Since $\EP$ is a smash idempotent in $G$-spectra, we have a lot of geometric spectra.
\begin{cor}
For all $Y$, $\EP\wedge Y$ is geometric.
\end{cor}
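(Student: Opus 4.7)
The plan is to reduce the corollary to the smash-idempotence of $\EP$, and then to the (essentially definitional) fact that $\EP$ is contractible when restricted to any proper subgroup. By the preceding proposition, it suffices to show that the natural map
\[
\EP\wedge Y\to \EP\wedge(\EP\wedge Y)
\]
is a $G$-weak equivalence for every $Y$. Associating smash factors, this map is (up to canonical equivalence) $(\EP\wedge\EP)\wedge Y$ compared with $\EP\wedge Y$, so everything reduces to verifying the statement $\EP\wedge\EP\simeq \EP$ that was already asserted in the text.

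To establish smash-idempotence, I would use the defining cofiber sequence $E\cP_+\to S^0\to \EP$ and smash it with $\EP$ to obtain
\[
E\cP_+\wedge\EP\to \EP\to \EP\wedge\EP.
\]
The key step is that $E\cP_+\wedge\EP\simeq\ast$. Since $E\cP$ is built out of cells of the form $G/H_+\wedge D^n$ with $H\in\cP$ a proper subgroup, and since the Wirthm\"uller-style adjunction gives
\[
G/H_+\wedge\EP\simeq G_+\wedge_H i_H^\ast\EP,
\]
it is enough to see that $i_H^\ast\EP$ is $H$-contractible for every $H\in\cP$. This is immediate from the characterization of $\EP$: for any proper subgroup $H$, the family $\cP$ contains $H$ and all its subgroups, so $E\cP$ restricts to a universal space for the family of all subgroups of $H$, i.e., to an $H$-equivariant point, and consequently $\EP$ restricts to an $H$-contractible spectrum. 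Passing to the telescope/colimit defining $E\cP$, we conclude $E\cP_+\wedge\EP\simeq\ast$, so the cofiber sequence above shows $\EP\to\EP\wedge\EP$ is a $G$-equivalence.

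The main (and really only) obstacle is this restriction computation $i_H^\ast\EP\simeq\ast$ for $H\subsetneq G$; once that is in hand, the rest is formal. I would remark that one could equally well phrase the argument directly on homotopy Mackey functors: the restriction map $i_H^\ast(\EP\wedge Y)\simeq i_H^\ast\EP\wedge i_H^\ast Y\simeq\ast$ shows $\underline{\pi_\ast}(\EP\wedge Y)(G/H)=0$ for all proper $H$, which is exactly the condition that $\EP\wedge Y$ be geometric. Either route packages the same content, and both rely solely on the basic property that $\EP$ kills induction from proper subgroups.
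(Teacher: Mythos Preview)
Your argument is correct and follows exactly the route the paper indicates: the paper simply invokes the smash idempotence of $\EP$ together with the preceding proposition, and you have unpacked why $\EP\wedge\EP\simeq\EP$ holds (via the cofiber sequence and the contractibility of $i_H^\ast\EP$ for proper $H$). The alternative formulation via $\underline{\pi_\ast}(\EP\wedge Y)(G/H)=0$ is also fine and amounts to the same content.
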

In fact, all geometric spectra essentially arise in this way (due to the equivalence described in Proposition~\ref{prop:FixedPoints}). If $\mM$ is a Mackey functor, then the Mackey functor $\EP\otimes\mM$ of Proposition~\ref{prop:EPmM} is $\underline{\pi_0}(\EP\wedge H\mM)$. Moreover, for $X$ a $(-1)$-connected $G$-spectrum, 
\[
\underline{\pi_{0}(\EP\wedge X)}=\EP\otimes \underline{\pi_{0}X}.
\]

While all geometric spectra are of this form, they arise from seemingly unrelated ways. The following proposition is obvious, but we have found it sufficiently useful that it bears repeating.

\begin{prop}
If $f\colon X\to Y$ is an equivariant map such that
\[
\underline{\pi_\ast}(X)(G/H)\to\underline{\pi_\ast}(Y)(G/H)
\]
is an isomorphism for all proper subgroups $H$, then the fiber of $f$ is geometric.
\end{prop}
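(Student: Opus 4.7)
The plan is to reduce the claim to checking that the restriction of the fiber to every proper subgroup is contractible, and then to extract this from the long exact sequence associated to the fiber sequence $F\to X\to Y$. By definition, $F$ is geometric exactly when $\underline{\pi_\ast}(F)(G/H)=0$ for every proper subgroup $H\subsetneq G$, which is equivalent to requiring that $i_H^\ast F$ is weakly contractible as an $H$-spectrum for every such $H$.

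First I would fix a proper subgroup $H\subsetneq G$ and restrict the fiber sequence along $i_H^\ast$. Since restriction is a right adjoint (and a left adjoint), it preserves fiber sequences, so $i_H^\ast F\to i_H^\ast X\to i_H^\ast Y$ is again a fiber sequence, this time of $H$-spectra. The key observation is that for any subgroup $K\subseteq H$, the inclusion $K\subseteq H\subsetneq G$ forces $K$ to be a proper subgroup of $G$ as well. Hence the hypothesis applies at $G/K$, and
\[
\underline{\pi_\ast}(i_H^\ast X)(H/K)=\underline{\pi_\ast}(X)(G/K)\to\underline{\pi_\ast}(Y)(G/K)=\underline{\pi_\ast}(i_H^\ast Y)(H/K)
\]
is an isomorphism for every $K\subseteq H$. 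Thus $i_H^\ast f$ induces an isomorphism of homotopy Mackey functors on all orbits, so it is an $H$-equivalence, and consequently $i_H^\ast F$ is weakly contractible.

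Finally I would assemble the conclusion: since $i_H^\ast F\simeq\ast$ for every proper subgroup $H\subsetneq G$, we get $\underline{\pi_\ast}(F)(G/H)=\pi_\ast^H(F)=0$ for all proper $H$. By definition this means $\underline{\pi_\ast}(F)$ is concentrated on $G/G$, so $F$ is geometric.

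There is essentially no obstacle here; the only point requiring any care is the remark that subgroups of a proper subgroup of $G$ remain proper in $G$, which is what guarantees that the isomorphism hypothesis restricts correctly so that the long exact sequence forces vanishing of the relevant homotopy groups of $F$.
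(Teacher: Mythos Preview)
Your argument is correct. The paper does not actually supply a proof of this proposition; it merely remarks that the statement is obvious and records it for later use. Your reasoning via restriction is fine, though it is a bit more elaborate than necessary: one can simply evaluate the long exact sequence of Mackey functor homotopy groups for the fiber sequence $F\to X\to Y$ at $G/H$ for each proper $H\subsetneq G$, where the hypothesis says the maps $\underline{\pi_n}(X)(G/H)\to\underline{\pi_n}(Y)(G/H)$ are isomorphisms, forcing $\underline{\pi_n}(F)(G/H)=0$. Your observation that every subgroup of a proper subgroup is itself proper is exactly what makes the restricted map an $H$-equivalence, but you do not really need to pass to $H$-spectra to see the vanishing.
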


This seemingly simple condition of ``geometric'' actually allows us to completely deduce the slices of a geometric spectrum $X$.

\begin{thm}\label{thm:GeomSliceTower}
If $X$ is a geometric spectrum, then the slice tower of $X$ is a reindexed form of the Postnikov tower of $X$: the $(n-1)$\textsuperscript{st} Postnikov layer is the $(n|G|-1)$\textsuperscript{st} slice  and all other slices are contractible.
\end{thm}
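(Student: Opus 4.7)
The core of the argument is Theorem~\ref{thm:Slices}: since every Mackey functor $\underline{\pi_j}(X)$ is concentrated on $G/G$, each Postnikov layer $\Sigma^{j}H\underline{\pi_j}(X)$ is already a slice, specifically a $((j+1)|G|-1)$-slice. The plan is to identify the Postnikov section $\tau_{\leq n-1}X$ with the slice section $P^{n|G|-1}(X)$; uniqueness of the localization sequence then packages everything into the reindexing statement.

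To carry this out I would verify two things. First, that $\tau_{\leq n-1}X$ is $\leq n|G|-1$: it is an iterated extension of layers $\Sigma^{j}H\underline{\pi_j}(X)$ with $j\leq n-1$, each of which is a slice in degree $(j+1)|G|-1 \leq n|G|-1$, and the full subcategory of spectra $\leq n|G|-1$ is closed under extensions and limits. Second, that the connective cover $X_{\geq n}$, i.e.\ the fiber of $X\to\tau_{\leq n-1}X$, lies in $\tau_{\geq n|G|}$: it is built as an iterated extension/filtered colimit of Postnikov layers $\Sigma^{j}H\underline{\pi_j}(X)$ with $j\geq n$, each a $((j+1)|G|-1)$-slice, and the inequality $(j+1)|G|-1\geq n|G|$ places each such layer in $\tau_{\geq n|G|}$, which is closed under extensions and filtered colimits. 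Together these identify the fiber sequence $X_{\geq n}\to X\to \tau_{\leq n-1}X$ with the localization sequence for $\tau_{\geq n|G|}$, so $P^{n|G|-1}(X)=\tau_{\leq n-1}X$ and $P^{n|G|-1}_{n|G|-1}(X)\simeq \Sigma^{n-1}H\underline{\pi_{n-1}}(X)$.

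To see that all other slices vanish, for $n|G|-1\leq k\leq (n+1)|G|-2$ the fiber of $P^{(n+1)|G|-1}X\to P^{n|G|-1}X$ is $\Sigma^{n}H\underline{\pi_n}(X)$, a $((n+1)|G|-1)$-slice and so in $\tau_{\geq k+1}$; this forces $P^{k}X = P^{n|G|-1}X$, and hence $P_{k}^{k}X\simeq\ast$ for such intermediate $k$. The main obstacle is the rigorous verification that the Postnikov section and connective cover can be constructed as (co)limits for which the relevant localizing subcategories really are closed; for spectra bounded below in both the Postnikov and slice senses this is immediate by finite induction on the Postnikov filtration, and for general $X$ it follows from the convergence of the slice tower recorded after Proposition~\ref{prop:NegativeSpheres} together with closure of $\tau_{\geq n|G|}$ under filtered colimits, applied to the connective-cover exhaustion.
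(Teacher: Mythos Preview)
Your argument is correct and is precisely the alternative the paper records in the Remark immediately following its proof: once Theorem~\ref{thm:Slices} identifies each Postnikov layer as a slice of the appropriate degree, the general principle that a tower whose layers are slices of increasing slice-connectivity must be the slice tower (Proposition~4.41 of \cite{HHR:Kervaire}) finishes things, and your two verifications are exactly what that principle unpacks to. The paper's \emph{primary} proof is different and more direct: since $X$ is geometric, every induced generator $G_+\wedge_H S^{k\rho_H-\epsilon}$ with $H\subsetneq G$ already maps trivially into $X$, so the slice nullification only sees the spheres $S^{k\rho_G-\epsilon}$; and for those the inclusion $S^{k-\epsilon}\hookrightarrow S^{k\rho_G-\epsilon}$ induces an equivalence on mapping spaces into $X$ because its cofiber is built from induced cells. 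Hence the slice nullification tower literally coincides with the Postnikov nullification tower, with no extension or (co)limit bookkeeping required. Your route is the more portable one (it is the template whenever one can recognize the layers of some tower as slices), while the paper's route buys a one-line argument free of convergence issues.

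One point to tighten: you describe $X_{\geq n}$ as an ``iterated extension/filtered colimit'' of its Postnikov layers, but the Postnikov tower exhibits it as a \emph{limit}, and $\tau_{\geq n|G|}$ is not closed under limits. Your connective-cover exhaustion addresses the dual problem for $\tau_{\leq n-1}X$ when $X$ is unbounded below (using that slice-\emph{coconnective} objects are closed under filtered colimits, the generators being compact), but it does not touch this case since $X_{\geq n}$ is already bounded below. The quickest repair again uses geometricity directly: $\Sigma^{-n\rho_G}X_{\geq n}\simeq\Sigma^{-n}X_{\geq n}$ is $(-1)$-connected, hence in $\tau_{\geq 0}$, so $X_{\geq n}\in\tau_{\geq n|G|}$ by suspension invariance. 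Alternatively, for any $m$ with $m+1\geq n|G|$, apply extension closure to the fiber sequence $X_{\geq m+1}\to X_{\geq n}\to\tau_{\leq m}(X_{\geq n})$: the base is a \emph{finite} extension of slices in $\tau_{\geq n|G|}$ and the fiber is $m$-connected, hence in $\tau_{\geq m+1}\subset\tau_{\geq n|G|}$ by Proposition~\ref{prop:Spheres} alone.
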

\begin{proof}
We first note that the Postnikov layers are in fact slices by Theorem~\ref{thm:Slices}. The slice tower is a nullification tower, nullifying induced up representation spheres. Since maps from anything induced from a proper subgroup are automatically null (this is a restatement of $X$ being geometric), we need only consider the effect of nullifying maps from regular representation spheres. In particular, we see that the slices only change in dimensions congruent to $-1$ or $0$ modulo the order of $G$.

Now we apply the same argument used for Theorem~\ref{thm:Slices}: the inclusion $S^{n-1}$ into $S^{n\rho_{G}-1}$ is an inclusion modulo induced cells, and therefore the space of maps from $S^{n-1}$ into $X$ is the same as the space of maps of $S^{n\rho_{G}-1}$ into $X$. The nullification towers are therefore the same, giving the result.
\end{proof}

\begin{remark}
In \cite[Proposition 4.41]{HHR:Kervaire}, it is shown that if the layers of a tower are slices of increasing slice-connectivity, then the tower is the slice tower. Theorem~\ref{thm:Slices} shows that the layers of the Postnikov tower are increasing slices, and so we conclude that the two towers are the same.
\end{remark}

This theorem gives us a way to interpret the slice tower for a general spectrum $Y$. The Mackey functor homotopy groups of $Y$ are essentially determined by the homotopy groups of the $H$-geometric fixed points of $Y$ for all subgroups $H$. Each of these are restrictions of $Y$ to subgroups, followed by smashing with the appropriate $\EP$, and since smashing with $\EP$ produces a geometric spectrum, the slices of these are a smeared version of the ordinary Postnikov sections. Thus we learn that the slice tower is essentially an aggregation of stretched out Postnikov sections, scaled by the order of the appropriate subgroups.

\section{Here be dragons}

We finish with a few observations and conjectures. These fall into two flavors: algebraic and topological. As is common in algebraic topology, both are underlain by the topology of the slice filtration.

On the algebraic side, we have the general question of the algebraic analogue of the slice filtration. We defined a decreasing filtration of a general Mackey functor, and for cyclic $p$-groups, this gives the slice filtration. The definition, however, was independent of the structure of the finite group: the $k$\textsuperscript{th} filtered piece is the subMackey functor generated by all elements which restrict to zero in all subgroups of order at most $k$. The obvious question is if the {\EM} spectra associated to the filtration quotients realize the slices of $H\mM$. Our analysis of slices breaks down here, since we do not understand how to handle non-normal subgroups and pullbacks therefrom\footnote{Since the submission of this paper, Ullman has used the regular slice filtration to prove this conjecture and the following \cite{Ul12}.}.

There is an additional subtlety: it is not immediately clear that the slices are all {\EM} spectra. For $H\mA$ and $H\mM$ for $G=C_{p^{n}}$, we showed this directly. For other spectra, this is less clear (though we know that {\em{eventually}} the layers of the slice tower will be {\EM} spectra). The process of forming the intermediate stages could introduce and then kill higher homotopy groups. While we believe all slices to be {\EM} spectra, we do not have an easy argument showing it.

On the topological side, we have the general question of the relationship between the slice tower of a quotient group $G/N$ and the slice tower for $G$. We showed in Theorem~\ref{thm:PullBackSlices} that if $X$ is less than or equal to $(j-1)$ for $G/N$ then $\phi_{N}^{\ast}(X)$ is less than or equal to $j|N|-1$ for $G$. This provides the upper bound. We do not  have the corresponding precise lower bound except in special cases. In other words, if $X$ is a $(j-1)$-slice, then we do not know that $\phi_{N}^{\ast}(X)$ is in $\tau_{\geq (j|N|-1)}$, which would imply that $\phi_{N}^{\ast}(X)$ is a $(j|N|-1)$-slice. The only examples in which we could completely understand the connection are the geometric spectra, and here we see that this exactly holds.

The notion of ``pullback'' used here is not the naive one. We have chosen to couple the pulling-back with the $N$-geometric fixed points to make more conceptual the connections between the homotopy groups and the obvious effect on Mackey functors. This has the added advantage of establishing the $N$-geometric fixed points as a (homotopy) left adjoint to the $N$ pullback functor. Coupled with our earlier conjecture (Conjecture~\ref{conj:PullBackSlices}), this suggests a very clean and detailed story linking the slice tower to various flavors of geometric fixed points and vastly generalizing Theorem~\ref{thm:GeomSliceTower} and the remarks thereafter.

\bibliographystyle{plain}

\bibliography{math}

\end{document}